\theoremstyle{plain}
\newtheorem{thm}{Theorem}[section]
\newtheorem*{thm*}{Theorem}
\newaliascnt{prop}{thm}
\newaliascnt{cor}{thm}
\newaliascnt{lem}{thm}
\newaliascnt{claim}{thm}
\newaliascnt{defn}{thm}
\newaliascnt{ques}{thm}
\newaliascnt{conj}{thm}
\newaliascnt{fact}{thm}
\newaliascnt{rem}{thm}
\newaliascnt{ex}{thm}
\newtheorem{prop}[prop]{Proposition}
\newtheorem{cor}[cor]{Corollary}
\newtheorem{lem}[lem]{Lemma}
\newtheorem{claim}[claim]{Claim}
\newtheorem*{prop*}{Proposition}
\newtheorem*{cor*}{Corollary}
\newtheorem*{lem*}{Lemma}
\newtheorem*{claim*}{Claim}
\theoremstyle{definition}
\newtheorem{defn}[defn]{Definition}
\newtheorem*{defn*}{Definition}
\newtheorem*{ques*}{Question}
\newtheorem*{conj*}{Conjecture}
\newtheorem*{prob*}{Problem}
\newtheorem{rem}[rem]{Remark}
\newtheorem{ex}[ex]{Example}
\newtheorem*{fact*}{Fact}
\newtheorem*{rem*}{Remark}
\newtheorem*{ex*}{Example}
\def\textsectionN~{\textsection{}}
\def\equationautorefname~#1\null{#1\null}
\renewcommand\phi{\varphi}
\renewcommand\epsilon{\varepsilon}
\renewcommand\leq{\leqslant}
\renewcommand\geq{\geqslant}
\renewcommand\subsetneq{\subsetneqq}
\newcommand\num{\mathbb}\makeatletter
\newcommand{\set}{  \@ifstar{\@setstar}{\@set}}\newcommand{\@setstar}[2]{\{\, #1 \mid #2 \,\}}
\newcommand{\@set}[1]{\{ #1 \}}
\newcommand{\Set}[1]{\{\, #1 \,\}}
\newcommand{\lin}[1]{\langle\, #1 \,\rangle}
\newcommand{\pco}[1]{[\, #1 \,]}
\newcommand{\CC}{\num{C}}
\DeclareMathOperator{\codim}{codim}
\DeclareMathOperator{\Cone}{Cone}
\DeclareMathOperator{\Sing}{Sing}
\DeclareMathOperator{\Sec}{Sec}
\renewcommand{\Join}{\mathrm{Join}}
\newcommand{\A}{\mathbb{A}}
\newcommand{\Gr}{\mathbb{G}}
\newcommand{\TT}{\mathbb{T}}
\newcommand{\PP}{{\mathbb{P}}}
\newcommand{\PN}{\PP^N}
\newcommand\spcirc{^\circ}
\newcommand{\pr}{\mathrm{pr}}
\newcommand{\Pv}[1][N]{(\PP^{#1})\spcheck}
\newcommand{\fib}[1]{\overline{\gamma^{-1}(\gamma(#1))}}
\newcommand{\vspc}{\vspace{1.5ex}}
\title[Cubic hypersurfaces with positive dual defects]
{Cubic hypersurfaces with positive dual defects}
\author[K.~Furukawa]{Katsuhisa~FURUKAWA}
\address{Graduate School of Mathematical Sciences, 
the University of Tokyo, Tokyo, Japan}
\email{katu@ms.u-tokyo.ac.jp}
\subjclass[2010]{14N05, 14M17}
\keywords{singular cubic hypersurface, dual defect, Severi variety, Gauss map}
\begin{document}

\maketitle

\begin{abstract}
  We show that if a cubic hypersurface with positive dual defect
  over the complex number field
  is not a cone, then either the hypersurface coincides with
  the secant variety of the singular locus, or
  the hypersurface contains
  a linear subvariety of dimension greater than the dual defect
  such that the intersection of the singular locus and a general contact locus
  is contained in the linear subvariety.
  % v3: revised the statement of the main theorem and proofs.
\end{abstract}
\section{Introduction}

Let $X \subset \PN$ be a projective variety over the complex number field $\CC$.
The number $\delta_X := N-1-\dim(X^*)$ is called the \emph{dual defect} of $X$,
where $X^* \subset \Pv$ is the \emph{dual variety}, 
the variety of hyperplanes tangent to $X$.
Typical examples of $X$ with $\delta_X > 0$
are cones, joins, and secants of some varieties.
For a reference, see \cite{FP}.

Now let us consider a cubic hypersurface $X \subset \PN$ with $\delta_X > 0$.
Then $X$ is singular, and the secant variety of the singular locus,
\[
  \Sec(\Sing(X)) := \overline{\bigcup_{x,y \in  \Sing(X)} \lin{xy}} \subset \PN,
\]
is contained in $X$.
(This is because, if the line $\lin{xy} \subset \PN$ is not contained in  $X$ for singular points $x,y$,
then the length of $\lin{xy} \cap X$ is $\geq 4$, contrary to $\deg(X) = 3$.)

F.~L.~Zak \cite[IV, \textsection{}5]{Za} classified smooth projective varieties of codegree 3; in  particular,
if $X$ is a cubic hypersurface with $\delta_X > 0$
such that the deal variety $X^*$ is \emph{smooth}, then $X$ is one of the following;
(1) the $3$-fold in $\PP^4$ which is the image of
the composition of the Segre embedding $\PP^1 \times \PP^2 \hookrightarrow \PP^5$
and the linear projection $\PP^5 \dashrightarrow \PP^4$
from a point of $\PP^5 \setminus (\PP^1 \times \PP^2)$,
(2) the secant variety of a Severi variety,
or
(3) a general hyperplane section of the secant variety of a Severi variety.
Secants of Severi varieties are also known as the hypersurfaces
defined by homaloidal EKP cubic polynomials.
See \cite[Theorem~3]{Do}.

J.-M.~Hwang \cite{Hw} characterized secants of Severi varieties
as cubic hypersurfaces $X \subset \PN$
with nonzero Hessians having \emph{large} automorphism groups;
the condition is expressed in  the context of
the (first) prolongation $\mathfrak{aut}(\hat{X})^{(1)}$
of the Lie algebra $\mathfrak{aut}(\hat{X})$ of infinitesimal linear automorphisms.
For references, see also \cite{HM}, \cite{FH12}, \cite{FH18}.
We have $\delta_X > 0$
for a cubic hypersurface $X$ with nonzero Hessian and
$\mathfrak{aut}(\hat{X})^{(1)} \neq 0$,
due to \cite[Corollary~4.5]{Hw}.

On the other hand,
$\delta_X > 0$ holds for hypersurfaces $X \subset \PN$ with \emph{vanishing Hessians}.
U.~Perazzo \cite{Pe} investigated cubic hypersurfaces with vanishing Hessians,
which are not cones.
Such cubic hypersurfaces give examples of $X$ whose singular loci are linear varieties;
in particular, $\Sec(\Sing(X)) \neq X$.
R.~Gondim and F.~Russo \cite{GR} investigated
cubic hypersurfaces with vanishing Hessians after Perazzo,
and gave the classification for $N \leq 6$.
R.~Gondim, F.~Russo, and G.~Staglian\`{o} \cite{GRS} recently gave a general construction
of hypersurfaces with vanishing Hessians based on the dual Cayley trick.
For a reference, see also \cite[Chapter~7]{Ru}.

In this paper, we study cubic hypersurfaces with positive dual defects
even in the case when dual varieties are singular.
We denote by 
$\gamma: X \dashrightarrow X^* \subset \Pv$
the \emph{Gauss map} of $X$, which is a rational map sending
a smooth point $x \in  X$ to the embedded tangent space $\TT_xX \subset \PN$.
The fiber of $\gamma$ at $T \in X^*$
is called the contact locus on $X$ of $T$.
Our main result is:

\begin{thm}\label{mainthm}
  Let $X \subset \PN$ be a cubic hypersurface with $\delta_X > 0$
  and assume that $X$ is not a cone.
  Then exactly one of the following holds.
  \begin{enumerate}[\normalfont{}(I)]
  \item $X = \Sec(S)$ for an irreducible component $S$ of $\Sing(X)$.
  \item $X = \Join(Q_1, Q_2) := \overline{\bigcup_{x \in  Q_1, y \in  Q_2} \lin{xy}}$,
    where $Q_i$ is a smooth quadric hypersurface of
    $\lin{Q_i} = \PP^{\dim Q_i + 1} \subset \PN$ with $i=1,2$ such that
    $Q_1 \cap Q_2 = \lin{Q_1} \cap \lin{Q_2} = \set{z_0}$
    for a point $z_0 \in  \PN$; in this case, we have $\delta_X=1$.
  \item 
    There exists a linear variety $P \subset X$ of dimension $> \delta_X$
    such that $\fib{x} \cap \Sing(X) \subset P$ for general $x \in X$;
    in this case, $X \neq \Sec(\Sing(X))$.
  \end{enumerate}
\end{thm}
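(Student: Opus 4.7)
The plan is to exploit the classical theorem of Landman--Ein that the general fiber $F := \fib{x}$ of the Gauss map of the hypersurface $X$ is a linear subvariety of dimension exactly $\delta_X$, along which the tangent hyperplane $T := \TT_yX$ is constant. Set $Z := F \cap \Sing(X)$. The cubic constraint $\Sec(\Sing(X)) \subset X$ noted in the introduction makes it natural to split the proof according to whether $\Sec(\Sing(X)) = X$ or not.

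If $\Sec(\Sing(X)) = X$, decompose $\Sing(X) = S_1 \cup \cdots \cup S_m$ into irreducible components. Then $X = \bigcup_{i,j} \Join(S_i, S_j)$, and the irreducibility of $X$ and of each join forces $\Join(S_i, S_j) = X$ for some pair. The case $i=j$ gives conclusion (I) with $S := S_i$. The remaining case, with $i \neq j$ and $\Sec(S_\ell) \subsetneq X$ for every $\ell$ (in order to exclude (I)), must be identified with (II). The strategy is to fix a general pair $(a, b) \in S_i \times S_j$ and analyze the residual conic of $L \cap X$ for a general plane $L$ through $a, b$: since $a, b \in \Sing(X)$, the line $\lin{ab}$ meets $X$ with multiplicity $\ge 4$ and so $\lin{ab} \subset X$, while the residual conic in $L$ governs the structure of $X$ transverse to $\lin{ab}$. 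Combined with the dimension identity $\dim S_i + \dim S_j + 1 = \dim X$ and the standing hypothesis $\Sec(S_\ell) \subsetneq X$, this should recover each $S_\ell$ as a smooth quadric in its linear span, force $\lin{S_i} \cap \lin{S_j} = \set{z_0}$, and yield $\delta_X = 1$. I expect this case to be the main obstacle: ruling out hybrid configurations among several components of $\Sing(X)$ and pinning down the tangent-cone structure of $X$ along the common vertex $z_0$ will require careful bookkeeping.

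If instead $\Sec(\Sing(X)) \subsetneq X$, the target is conclusion (III), and I would construct the linear subvariety $P$ as follows. The key observation is that whenever $y \in \Sing(X) \cap T$ lies outside $F$, the span $P := \lin{F \cup \set{y}}$ is automatically contained in $X$: for a general smooth $a \in F$, the line $\lin{ay}$ lies in $T = \TT_aX$ (contributing multiplicity $\ge 2$ at $a$) and meets $X$ at the singular point $y$ (multiplicity $\ge 2$), so $\lin{ay} \subset X$ by $\deg X = 3$; the union of these lines is the linear span $P$, which has dimension $\delta_X + 1$ and contains $Z$. The remaining task is therefore to produce such a $y$, and the plan is either to show $\Sing(X) \cap T \not\subset F$ for general $T$ via a dimension count on the incidence variety $\set*{(x, y) \in X_{\mathrm{sm}} \times \Sing(X)}{y \in \TT_x X}$ (using reflexivity of the Gauss map together with the codimension of $X^*$ in $\Pv$), or, when that fails, to exhibit an alternative linear subvariety of $X$ enclosing $Z$ — for example a larger component of the singular locus itself, as in Perazzo-type cubics with vanishing Hessian.
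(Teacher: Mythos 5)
There is a genuine gap, and it sits exactly where the real difficulty of the theorem lies. In your second case ($\Sec(\Sing(X)) \subsetneq X$) you construct, for a general point $x$ with fiber $F = \fib{x}$, a linear variety $P = \lin{F \cup \set{y}}$ with $y \in \Sing(X) \cap \TT_xX \setminus F$. The containment $P \subset X$ is fine (it is the paper's \autoref{thm:Cw-def-lem}), but your $P$ depends on $x$: it contains $F \cap \Sing(X)$ for that one fiber only. Conclusion (III) asserts the existence of a \emph{single} linear variety $P$ with $\fib{x} \cap \Sing(X) \subset P$ for \emph{all} general $x \in X$, i.e.\ the quantifier on $P$ comes first. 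The only natural candidate is $P = \lin{Z}$, where $Z$ is the closure of the union of the loci $\fib{x} \cap \Sing(X)$ over all general $x$ (an a priori non-linear subvariety of $\Sing(X)$), and one must then prove two nontrivial facts: that $\lin{Z} \subset X$, and that $\dim \lin{Z} > \delta_X$. The first is the technical core of the paper (\autoref{thm:large-lin-Z}, proved by contradiction through the analysis of the codimension-one cones $C_w$ and the sweeping statements \autoref{thm:M-in-Xz}, \autoref{thm:Z-swout-T-Xz}, \autoref{thm:Z-in-Cw}); the second is \autoref{mainthm-b-sub}, proved by a separate induction. Neither appears in your plan, and your fallback (``exhibit an alternative linear subvariety enclosing $Z$, e.g.\ a component of the singular locus'') is not an argument. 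Note also that nothing in your sketch reduces the general $\delta_X$ to $\delta_X = 1$; the paper does this by cutting with general hyperplanes (\autoref{thm:ZXH}) and inducting, and you would need something of that sort since the fiber-level geometry you use is really a $\delta_X = 1$ picture.

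The first case ($\Sec(\Sing(X)) = X$) is set up correctly — the decomposition into joins and secants of components matches \autoref{thm:del1-I-II} — but the identification of $\Join(S_i,S_j) = X$ with (II) is only a declared strategy. In particular, the assertions that each $S_\ell$ is a smooth quadric hypersurface of its span, that the spans meet in a single point lying on both quadrics, and that $\delta_X = 1$ is forced, together constitute \autoref{thm:char-Xjoin} plus the step in the induction ruling out (II) for $\delta_X > 1$; the paper's proof of the quadric statement itself goes through the non-obvious sweeping result \autoref{thm:M-in-Xz} (via \autoref{thm:join-sweepZ}) to derive a contradiction from $\dim\lin{S_i} > \dim S_i + 1$, and your ``residual conic of a general plane through $a,b$'' does not obviously substitute for it. As written, the proposal reproduces the easy reductions of the paper but leaves both load-bearing arguments unproved.
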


As we have seen above,
examples of (I) are given by the secant of a Severi variety
and a general hyperplane section of it.
More generally, we can also take
the intersection of the secant of a Severi variety and some possibly special hyperplanes
since $X^*$ do not need to be smooth.
Examples of (III) are given by cubic hypersurfaces with vanishing Hessians.
In the case of (II), the join $X$ has nonzero Hessian;
see \autoref{thm:calc-Join} for an explicit calculation of $X$.

\vspc

The paper is organized as follows.
In \autoref{sec:sing-locus-codim}, we calculate the defining equation of $X$ explicitly
in  the case when $\dim (\Sing(X)) = N-2$.
After that, we may assume $\dim (\Sing(X)) < N-2$.
From \autoref{sec:sing-points-gener} to \autoref{sec:cones-codim-one},
we investigate the case when the dual defect $\delta_X=1$.
Then the fiber $F = \fib{x} \subset X$ of the Gauss map $\gamma$ at a general point $x \in  X$ is a \emph{line} intersecting $\Sing(X)$ (see \autoref{thm:FnSing-codim1}).

In \autoref{sec:sing-points-gener},
we consider the locus of intersection points $\fib{x} \cap \Sing(X)$ with general $x \in X$,
and take $Z_1, \dots, Z_{\kappa}$ to be the irreducible components of the closure of the locus.
Focusing on a relationship between $F$ and a point $w \in  Z_k \cap \TT_xX$,
we give a key technical result describing fibers of $\gamma$ near $F$
(\autoref{thm:M-in-Xz}).
In \autoref{sec:join-two-varieties},
we characterize cubic hypersurfaces which are \emph{joins} of subvarieties
(\autoref{thm:char-Xjoin}).
This implies that if $X = \Sec(\Sing(X))$, then either (I) or (II) of \autoref{mainthm} holds
(see \autoref{thm:del1-I-II}).

In \autoref{sec:secant-sing-locus}, in order to discuss the condition~(III) of \autoref{mainthm},
we study the case when $\Sec(\Sing(X)) \neq X$ and $\delta_X = 1$.
Then $\kappa = 1$ and that we write $Z = Z_1$.
The goal is to show that the linear subvariety $\lin{Z} \subset \PN$ spanned by $Z$
is indeed contained in $X$ (\autoref{thm:large-lin-Z}).
We argue by contradiction, and suppose $\lin{Z} \not\subset X$.
Then, in \autoref{sec:gauss-map-cone},
we show $\Cone_w(Z) \not\subset \Sing(X)$ for general $w \in Z$ (\autoref{thm:dim-gamma-ConezZ}).
In \autoref{sec:cones-codim-one},
cutting with tangent hyperplanes $T \in  \gamma(\Cone_w(Z))$,
we analyze the structure of an \emph{$(N-2)$-dimensional cone} $C_w \subset X$.
As a result, \autoref{thm:large-lin-Z} is proved.

In \autoref{sec:induct-dual-defects}, we study the case of any $\delta_X > 0$.
From the case of $\delta_X = 1$ and by induction on $\delta_X$,
we show a slightly weaker version of the main result (\autoref{mainthm-b}).
Also by induction, it holds that $\lin{Z}$ is not a $\delta_X$-plane if $\Sec(\Sing(X)) \neq X$.
Then we finish the proof of \autoref{mainthm}.

\section{Singular points, dual defects, and Gauss maps}

Let $X \subset \PN$ be  a cubic hypersurface, and let
$\gamma: X \dashrightarrow X^* \subset \Pv$ be the Gauss map defined by
$\gamma(x) = \TT_xX \in  \Pv$.
We consider the closure $F \subset X$ of a general fiber of $\gamma$.
Note that $\dim F = N-1-\dim(X^*) = \delta_X$.
It is known that $F$ is {a linear variety},
in particular, is {irreducible}. See \cite{FP}, \cite[I, Theorem~2.3(c)]{Za}.

\begin{rem}\label{thm:FnSing-codim1}
  From \cite[I, Theorem~1.7]{Za},
  it holds that
  \[
    \codim(F \cap \Sing(X), F) = 1
  \]
  for the closure $F \subset X$
  of a general fiber of $\gamma$.
  In particular,
  $X$ must be a cone
  if $\dim (\Sing (X)) < \delta_X$.
\end{rem}

\subsection{Singular locus of codimension one}
\label{sec:sing-locus-codim}

In this subsection, we study the case when
the dimension of $\Sing (X)$ is maximum.

\begin{prop}\label{thm:singX-N-2}
  Let $X \subset \PN$ be a cubic hypersurface which is not a cone.
  Assume $\dim(\Sing(X)) = N-2$. Then $\delta_X > 0$ if and only if
  $(N, \delta_X) = (4, 1)$ and $X$ is
  projectively equivalent to $(x_0x_1 x_2 + x_0^2 x_4 + x_1^2 x_3 = 0) \subset \PP^4$;
  in  this case, the singular locus is a $2$-plane.
\end{prop}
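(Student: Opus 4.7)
To prove the ``if'' direction, I directly verify the assertions for $f = x_0 x_1 x_2 + x_0^2 x_4 + x_1^2 x_3$: computing the partial derivatives shows $\Sing X = V(x_0, x_1) \cong \PP^2$ is a $2$-plane, that $X$ has no cone-vertex (the equation $\sum v_i \partial_i f \equiv 0$ has only the trivial solution $v = 0$), and that a general fiber of the Gauss map is a projective line, giving $\delta_X = 1$. For the converse, assume $\dim \Sing X = N - 2$, $X$ not a cone, and $\delta_X > 0$; fix an irreducible component $S$ of $\Sing X$ of dimension $N - 2$.

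The most subtle step is to show that $S$ is a linear $(N-2)$-subspace of $\PN$. First, $S$ must lie in some hyperplane $H$: if $S$ were non-degenerate, then the constraint that all $N + 1$ partials $\partial_i f$ belong to $I(S)_2$, combined with Euler's identity $3 f = \sum_i x_i \partial_i f$, would force a product-type factorization of $f$ incompatible with the irreducibility of $X$. Once $S \subset H$, I may write $f = c x_0^3 + x_0^2 L + x_0 Q + g$ with $H = V(x_0)$; the condition that all partials of $f$ vanish on $S$ implies that $g = f|_H$ has its entire gradient vanishing on $S$, which has codimension one in $H$. This forces $V(g)$ to be non-reduced, so $g$ has a linear factor of multiplicity at least two whose zero locus is an $(N-2)$-plane; since $S$ is irreducible of the same dimension, $S$ coincides with this plane.

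Choose coordinates so that $S = V(x_0, x_1)$. The condition $f \in (x_0, x_1)^2$ yields the normal form
\begin{equation*}
f = x_0^2 \alpha + x_0 x_1 \beta + x_1^2 \gamma + \Phi(x_0, x_1),
\end{equation*}
where $\alpha, \beta, \gamma \in \CC[x_2, \ldots, x_N]_1$ are linear forms and $\Phi \in \CC[x_0, x_1]_3$ is a binary cubic. The hypothesis that $X$ is not a cone forces $\lin{\alpha, \beta, \gamma}$ to span all of $\CC[x_2, \ldots, x_N]_1$, since otherwise some coordinate $x_j$ ($j \geq 2$) does not appear in $f$ and provides a cone-vertex. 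Therefore $N - 1 \leq 3$. For $N = 3$, I rule out $\delta_X > 0$ by direct computation: using that $\partial_2 f, \partial_3 f \in \CC[x_0, x_1]_2$ factor through the second Veronese $\PP^1 \to \PP^2$, together with Euler's identity, the generic fiber of the Gauss map turns out to be finite, so $\delta_X = 0$, a contradiction. Hence $N = 4$.

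For $N = 4$, the three linearly independent forms $\alpha, \beta, \gamma$ can be normalized to $\alpha = x_4$, $\beta = x_2$, $\gamma = x_3$ by a linear change in $x_2, x_3, x_4$. The residual binary cubic $\Phi(x_0, x_1)$ has four coefficients, which are absorbed by the six-parameter family of substitutions $x_j \mapsto x_j + \lambda_{j0} x_0 + \lambda_{j1} x_1$ for $j = 2, 3, 4$, leaving exactly $f = x_0 x_1 x_2 + x_0^2 x_4 + x_1^2 x_3$. The main technical obstacle throughout is the first step, showing that a codimension-two component of the singular locus is linear; once this is in place, the remainder consists of routine coordinate changes and the direct computation of the Gauss image.
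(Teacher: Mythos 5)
Your overall architecture matches the paper's: reduce to the case where the codimension-two component $S$ of $\Sing(X)$ is an $(N-2)$-plane, pass to the normal form $F = x_0x_1 l_1 + x_0^2 l_2 + x_1^2 l_3$ (your $\Phi(x_0,x_1)$ is just absorbed into the $l_i$), conclude $N \leq 4$ from the non-cone hypothesis because only five linear forms occur, and settle $N=3$ and $N=4$ by direct Gauss-map computations. The second half of your linearity argument is fine: once $S$ lies in a hyperplane $H = V(x_0)$, the gradient of $g = f|_H$ vanishes on $S$, which is a whole component of the cubic $V(g) \subset H$, so $g$ acquires a repeated linear factor and $S$ is the corresponding $(N-2)$-plane.

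The gap is precisely the step you flag as the most subtle: showing that $S$ is degenerate. The claim that non-degeneracy of $S$, ``combined with Euler's identity, would force a product-type factorization of $f$'' is not an argument — Euler's identity produces no factorization, and you give no reason why the $N+1$ partial derivatives (which are linearly independent quadrics exactly because $X$ is not a cone) cannot all vanish on a non-degenerate irreducible variety of codimension two. To close this along your lines you would need something like the classical bound $\dim I(S)_2 \leq \binom{\codim S+1}{2} = 3$ for irreducible non-degenerate $S$ of codimension two, contradicting $N+1 \geq 4$; that is a genuine theorem you would have to prove or cite, not a routine observation. The paper sidesteps the issue entirely: since a line through two singular points of a cubic meets it with multiplicity $\geq 4$, one has $\Sec(S) \subset X$; if $\dim\Sec(S) = N-1$ then $\Sec(S) = X$ has dimension $\dim S + 1$, so \autoref{thm:sec-linear} would make $X$ a hyperplane, a contradiction; hence $\Sec(S) = S$ and $S$ is already an $(N-2)$-plane. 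I recommend replacing your degeneracy step with that two-line secant argument.
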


The above $3$-fold in $\PP^4$ is the image of $\PP^1 \times \PP^2$
under the composite morphism of the Segre embedding $\PP^1 \times \PP^2 \hookrightarrow \PP^5$
and the linear projection $\PP^5 \dashrightarrow \PP^4$
from a point of $\PP^5 \setminus (\PP^1 \times \PP^2)$,
which is the dual variety of (I) of \cite[IV, Theorem 5.2]{Za}.
See also \cite[2.2.9]{FP}.
It is known that this $3$-fold is a hypersurface with vanishing Hessian.
For more details, see \cite[Theorem~4.2]{GR}, \cite[Theorem~7.6.7]{Ru}.
In order to prove the proposition,
let us show the following two lemmas.

\begin{lem}\label{thm:singX-N-2-lem}
  The statement of \autoref{thm:singX-N-2} holds
  if there exists an
  $(N-2)$-plane $M$ of $\PN$ contained in  $\Sing(X)$.
\end{lem}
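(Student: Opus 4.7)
The plan is to put $f$ into a normal form coming from $M \subset \Sing(X)$, use the non-cone assumption to force $N \leq 4$, and then analyze the Gauss map directly in each remaining case.

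First, I would choose coordinates so that $M = \{x_0 = x_1 = 0\} \subset \PN$. Since $M \subset \Sing(X)$, the cubic $f$ vanishes to order at least $2$ along $M$, forcing the normal form
\[
  f = P(x_0, x_1) + x_0^2\, E + x_0 x_1\, F + x_1^2\, G,
\]
with $P$ a binary cubic in $\CC[x_0, x_1]$ and $E, F, G$ linear forms in $x_2, \dots, x_N$. A coordinate variable $x_j$ with $j \geq 2$ enters $f$ only through $E, F, G$, so the assumption that $X$ is not a cone forces the three coefficient vectors of $E, F, G$ in $\CC^{N-1}$ to span $\CC^{N-1}$; since there are only three of them, this gives $N \leq 4$.

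Next I would exploit the rigid shape of the Gauss map. For each $j \geq 2$, the partial $\partial_j f = e_j x_0^2 + f_j x_0 x_1 + g_j x_1^2$ is a quadratic form in $(x_0, x_1)$ alone, so the last $N-1$ coordinates of $\gamma$ factor through the projection $\pi_M : X \dashrightarrow \PP^1$ from $M$. Along any fiber of $\pi_M$ the Euler relation forces $x_0 \partial_0 f + x_1 \partial_1 f$ to depend only on $(x_0:x_1)$, so $(\partial_0 f, \partial_1 f)$ moves on an affine line in $\CC^2$, yielding $\dim X^* \leq 2$ in every case. For $N = 2$ this already gives $\delta_X = 0$. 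For $N = 3$, the spanning of $\CC^2$ by the three coefficient vectors prevents the ratio $[\partial_2 f : \partial_3 f]$ from being a constant function of $(x_0 : x_1)$, so the one-parameter family of $1$-dimensional images sweeps out a $2$-dimensional $X^*$, and $\delta_X = 0$.

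For $N = 4$, the three coefficient vectors form a basis of $\CC^3$, and an automorphism of $(x_2, x_3, x_4)$ normalizes $E = x_4$, $F = x_2$, $G = x_3$. The six-parameter family of substitutions $x_4 \mapsto x_4 + \alpha x_0 + \beta x_1$, $x_2 \mapsto x_2 + \gamma x_0 + \delta x_1$, $x_3 \mapsto x_3 + \epsilon x_0 + \zeta x_1$ has enough freedom to absorb the four coefficients of $P$, producing the desired normal form $f = x_0 x_1 x_2 + x_0^2 x_4 + x_1^2 x_3$. Direct computation for this explicit cubic shows that $(\partial_2 f, \partial_3 f, \partial_4 f) = (x_0 x_1, x_1^2, x_0^2)$ parametrizes the smooth conic $\partial_2^2 = \partial_3 \partial_4$ in $\PP^2$ and that $(\partial_0 f, \partial_1 f)$ sweeps a line above each point of the conic, so $\dim X^* = 2$ and $\delta_X = 1$. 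The most delicate step is the case $N = 3$: the bound $\dim X^* \leq 2$ by itself only gives $\delta_X \geq 0$, and showing that the bound is attained requires the non-cone hypothesis to prevent a collapse of the Gauss image onto a curve.
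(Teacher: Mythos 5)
Your reduction is the same as the paper's: putting $M=\{x_0=x_1=0\}$, the vanishing of $f$ to order $2$ along $M$ gives $f=P(x_0,x_1)+x_0^2E+x_0x_1F+x_1^2G$, and the non‑cone hypothesis forces the coefficient vectors of $E,F,G$ to span $\CC^{N-1}$, whence $N\leq 4$ (the paper phrases this as ``$f$ involves only the five linear forms $x_0,x_1,l_1,l_2,l_3$''). Your $N=4$ analysis is also correct and matches the paper's: the spanning gives a basis, the binary cubic $P$ is absorbed by translations of $x_2,x_3,x_4$, and the explicit computation for $x_0x_1x_2+x_0^2x_4+x_1^2x_3$ gives $\dim X^*=2$, $\delta_X=1$.

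The genuine gap is the case $N=3$. You correctly observe that $[\partial_2 f:\partial_3 f]$ is a non-constant function of $(x_0:x_1)$ and that the Gauss image of each fiber of the projection from $M$ lies on a line, hence is \emph{at most} one-dimensional. But to conclude $\dim X^*=2$ you need each such image to be \emph{exactly} one-dimensional; if the fibers of $\pi_M$ (which, since $M$ occurs with multiplicity two in $X\cap H$ for $H\supset M$, are residual \emph{lines} sweeping out $X$) were contracted by $\gamma$, then $X^*$ would be a curve and $\delta_X=1$. You assert that the non-cone hypothesis ``prevents a collapse of the Gauss image onto a curve,'' and you even flag this as the delicate step, but you give no argument for it --- and it is precisely the content of the proposition for $N=3$. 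The paper fills this step by using the non-cone hypothesis a second time to write down the two explicit normal forms $x_0x_1x_2+x_0^2x_3+x_1^2l$ and $x_0x_1l+x_0^2x_3+x_1^2x_2$, and then computing directly that the differential of $\gamma$ has rank $2$ at a general point (the $(-1-2x_1l_{x_2})$ entry of the Hessian-type matrix). Some such computation or structural argument must be supplied; without it your proof only yields $\delta_X\in\{0,1\}$ for $N=3$, so the conclusion that $\delta_X>0$ forces $N=4$ is not established. (A further minor slip: for $N=2$ the bound $\dim X^*\leq 2$ gives nothing; $\delta_X=0$ there because the dual of an irreducible non-linear plane curve is a curve.)
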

\begin{proof}
  Choosing the homogeneous coordinates $\pco{x_0: x_1: \dots: x_N}$ on $\PN$,
  we my assume $M = (x_0 = x_1 = 0)$. Then
  the defining equation of $X$ is written as
  $F = x_0f + x_1g$ with some homogeneous polynomials $f,g$ of degree $2$.
  We write $F_{x_i} = \partial F/\partial x_i$, and so on.
  Since $M \subset \Sing(X)$, we have
  $F_{x_i}|_M = 0$.
  Since $F_{x_0} = f + x_0f_{x_0} + x_1g_{x_0}$,
  we have $f|_M = 0$; hence $f = x_0 f' + x_1 f''$
  with some linear homogeneous polynomials $f'$ and $f''$.
  In the same way, $g = x_0 g' + x_1 g''$.
  Therefore, we can write $F = x_0x_1 l_1 + x_0^2 l_2 + x_1^2 l_3$
  with linear homogeneous polynomials $l_1,l_2,l_3 \in  k[x_0, \dots, x_N]$.
  
  Since $X$ is not a cone,
  and since $F$ is generated only by
  $5$ linear polynomials $x_0,x_1,l_1,l_2,l_3$,
  we have $N \leq 4$; moreover,
  changing the homogeneous coordinates on $\PN$,
  we have
  \[
    F = { x_0x_1 x_2 + x_0^2 x_4 + x_1^2 x_3}
  \]
  if $N = 4$,
  and
  \[
    F = x_0x_1 x_2 + x_0^2 x_3 + x_1^2l
    \,\text{ or }
    F = x_0x_1 l + x_0^2 x_3 + x_1^2 x_2
  \]
  with a linear polynomial $l \in  \CC[x_0, x_1, x_2, x_3]$
  if $N = 3$.
  In the case of $N = 4$,
  it holds $\delta_X = 1$; for an explicit calculation, see \autoref{thm:ex-calc-Z} below.
  In the case of $N = 3$, we can also check $\delta_X = 0$ directly.

  For example, we consider the case of $N=3$ and $F = {x_0x_1 x_2 + x_0^2 x_3 + x_1^2l}$.
  Taking $x_0 = 1$ and $x_3 = \phi := -x_1 x_2 - x_1^2l$,
  we have that
  $\iota: \A^2 \hookrightarrow X \subset \PP^3$
  is defined by
  $(x_1,x_2) \mapsto (1, x_1,x_2,\phi)$.
  Then $\TT_{\iota(x)}X$ is equal to the $2$-plane
  spanned by the $3$ points corresponding to the row vectors of
  the matrix,
  \[
    \begin{bmatrix}
      1 & x_1 &x_2 & \phi
      \\
      0 &1 &0 & \phi_{x_1}
      \\
      0 & 0 &1 & \phi_{x_2}
    \end{bmatrix}.
  \]
  This is transformed to the following matrix under elementary row operations,
  \[
    \begin{bmatrix}
      1 & 0 & 0 & \phi-x_1\phi_{x_1}-x_2\phi_{x_2}
      \\
      0 &1 &0 & \phi_{x_1}
      \\
      0 & 0 &1 & \phi_{x_2}
    \end{bmatrix}.
  \]
  Then
  $\gamma \circ \iota: \A^2 \hookrightarrow X \dashrightarrow \Pv[3]$
  is given by
  \begin{equation}\label{eq:param-gamma}
    (x_1,x_2) \mapsto
    (\phi-x_1\phi_{x_1}-x_2\phi_{x_2}, \phi_{x_1}, \phi_{x_2})
    \in  \A^3 \subset \Pv[3].
  \end{equation}
  Since
  \[
    \begin{bmatrix}
      (\phi-x_1\phi_{x_1}-x_2\phi_{x_2})_{x_1}, \phi_{x_1x_1}, \phi_{x_2x_1}) \\
      (\phi-x_1\phi_{x_1}-x_2\phi_{x_2})_{x_2}, \phi_{x_1x_2}, \phi_{x_2x_2})
    \end{bmatrix}
    =
    \begin{bmatrix}
      * & * & -1 - 2x_1l_{x_2} \\
      * & -1 - 2x_1l_{x_2} & 0
    \end{bmatrix}
  \]
  with $l_{x_2} = \partial l / \partial x_2 \in  \CC$,
  and since $(-1 - 2x_1l_{x_2})^2 \neq 0$ for general $x \in  \A^2$,
  the rank of the linear map
  $d_{\iota(x)} \gamma: \TT_{\iota(x)}X \rightarrow \TT_{\gamma(\iota(x))}\Pv[3]$
  between Zariski tangent spaces
  is equal to $2$.
  Hence $\dim (\gamma(X)) = 2$, i.e., $\delta_X=0$.
\end{proof}

\begin{lem}\label{thm:sec-linear}
  Let $S \subset \PN$ be a projective variety
  such that $\dim \Sec(S) = \dim S + 1$.
  Then $\Sec(S)$ is a linear variety.
\end{lem}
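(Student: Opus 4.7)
The plan is to show that $\Sec(S) = \lin{S}$, which immediately forces $\Sec(S)$ to be a linear variety. The inclusion $\Sec(S) \subset \lin{S}$ is automatic, so the content lies in the reverse inclusion; I would establish it by proving that $\Sec(S)$ is a cone with vertex containing every point of $S$.

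First, I would rule out a degenerate case. If $S$ were itself a linear subvariety of $\PN$, then $\Sec(S) = S$ would have dimension $\dim S$ and not $\dim S + 1$, contradicting the hypothesis. So $S$ is not linear, and since the vertices of $S$ form a closed linear subspace contained in $S$ which cannot equal all of $S$, I may pick a general point $x \in S$ at which $S$ fails to be a cone.

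Next, for such an $x$, I would study the cone
\[
    C_x := \bigcup_{y \in S} \lin{xy} \subset \Sec(S),
\]
realized as the image of the morphism $S \times \PP^1 \to \PN$ sending $(y,[t:u])$ to $tx + uy$. Because $S$ is not a cone with vertex $x$, the general line $\lin{xy}$ with $y \in S$ is not contained in $S$ and hence meets $S$ in only finitely many points. Thus the general fiber of this parametrization is zero-dimensional, giving $\dim C_x = \dim S + 1 = \dim \Sec(S)$. Since $C_x$ is irreducible and contained in the irreducible variety $\Sec(S)$ of the same dimension, $C_x = \Sec(S)$; in particular, $\Sec(S)$ is a cone with vertex $x$ for a general point $x \in S$.

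Finally, the vertex of $\Sec(S)$ is a linear subspace of $\PN$. By the previous step it contains a dense subset of $S$, hence it contains $S$, and therefore also contains $\lin{S}$. Combined with $\Sec(S) \subset \lin{S}$, this yields $\Sec(S) = \lin{S}$, as required. The entire argument is elementary and I do not foresee any serious obstacle; the only point requiring slight care is the dimension count for $C_x$, which rests on the equivalence between ``$S$ is not a cone with vertex $x$'' and ``a general line $\lin{xy}$ meets $S$ in only finitely many points''.
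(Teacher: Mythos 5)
Your proof is correct and rests on the same pivot as the paper's: that $\Sec(S) = \Cone_x(S)$ for a general point $x \in S$, which the paper simply asserts and you actually justify via the dimension count for the parametrization $S \times \PP^1 \to C_x$ together with the observation that a general point of the non-linear $S$ is not a vertex of $S$. The only divergence is in the finish --- you note that the vertex of $\Sec(S)$ is a linear space containing a dense subset of $S$, hence $\lin{S}$, hence all of $\Sec(S)$, whereas the paper projects from one general point $x_0 \in S$ and applies the analogous ``cone over each of its general points, hence linear'' principle to the image $\pi_{x_0}(\Sec(S))$; the two finishes are routine and interchangeable.
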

\begin{proof}
  For general $x \in  S$, we have $\Sec(S) = \Cone_x(S)$.
  Fix a general point $x_0 \in  S$, and consider the linear projection
  $\pi = \pi_{x_0}: \PN \setminus \set{x_0} \rightarrow \PP^{N-1}$.
  Set
  $S' = \pi(S) = \pi(\Cone_{x_0}(S)) = \pi(\Sec(S))
  \subset \PP^{N-1}$.

  For general $x' = \pi(x) \in  S'$ with $x \in  S$,
  we have
  $\Cone_{x'}(S') = \pi(\Cone_{x}(S)) = \pi(\Sec(S)) = S'$.
  This means that $S'$ is a linear variety.
  Hence the preimage of $S'$ under $\pi$,
  which coincides with $\Sec(S)$, is a linear variety.
\end{proof}

\begin{proof}[Proof of \autoref{thm:singX-N-2}]
  Let $X$ be a cubic hypersurface which is not a cone.
  Let $S \subset \Sing(X)$ be an irreducible component
  such that $\dim S = N-2$.
  If $\Sec(S) = X$, then it follows from \autoref{thm:sec-linear}
  that $X$ must be a hyperplane, a contradiction.
  Thus $\Sec(S) = S$, i.e., $S$ is an $(N-2)$-plane.
  From \autoref{thm:singX-N-2-lem},
  we have the result.
\end{proof}

\begin{rem}\label{thm:N3}
  Let $X \subset \PP^3$ be a cubic surface with $\delta_X > 0$.
  Then $X$ is a cone.
  This is because, it follows from \autoref{thm:singX-N-2} that $\dim (\Sing(X)) = 0$;
  then $X$ is a cone as in  \autoref{thm:FnSing-codim1}.
\end{rem}

\begin{ex}\label{thm:ex-calc-Z}
  Let $X \subset \PP^4$ be
  the cubic $3$-fold defined by
  $(x_0x_1 x_2 + x_0^2 x_4 + x_1^2 x_3 = 0)$
  as in  \autoref{thm:singX-N-2}.
  We consider the locus $Z \subset \Sing(X)$
  of intersection points ${\fib{x} \cap \Sing(X)}$ with general $x \in X$.
  Then $Z$ is the conic $(x_2^2 - 4 x_3x_4  = 0)$ in the $2$-plane
  $\Sing(X) = (x_0 = x_1 = 0) \subset \PP^4$, as follows.

  Taking $x_0 = 1$, 
  we have a morphism
  $\iota: \A^{3} \hookrightarrow X \subset \PN$ sending
  \[
    (x_1, \dots, x_{3}) \mapsto \pco{1: x_1: \dots: x_{3}: \phi}
    \ \text{ with }\ \phi := - x_1 x_2 - x_1^2 x_3.
  \]
  In the same way as the formula~\autoref{eq:param-gamma} in the proof of \autoref{thm:singX-N-2-lem},
  the Gauss map
  $\gamma\circ \iota: \A^{3}\rightarrow \A^{4} \subset \Pv[4]$
  is described by
  \begin{multline*}
    (x_1, \dots, x_{3})
    \mapsto
    \\
    (\phi - \sum x_i \phi_{x_i}, \phi_{x_1}, \dots, \phi_{x_{3}})
    =
    (x_1x_2+2x_1^2x_3, -x_2-2x_1x_3, -x_1, -x_1^2).
  \end{multline*}
  Take general numbers $a,b \in \CC$ and consider the formula $b = -x_2-2ax_3$.
  Setting $\alpha = (-ab, b,-a,-a^2) \in \A^4$,
  we have 
  $(\gamma\circ \iota) (a, -b-2ax_3, x_3) = \alpha$ for each $x_3 \in \CC$.
  Taking $u=x_3$, we have that the fiber $\overline{\gamma^{-1}(\alpha)} \subset X$
  is the line parameterized by the morphism $\PP^1 \rightarrow \PP^4$,
  \[
    \pco{t:u} \mapsto \pco{t: at: -bt-2au, u: abt+a^2u}.
  \]
  Therefore
  $\overline{\gamma^{-1}(\alpha)} \cap \Sing(X) = \pco{0: 0: -2a: 1: a^2}$.
  Hence $Z$ is the conic $(x_2^2 - 4 x_3x_4  = 0) \subset \PP^2$.
\end{ex}

\subsection{Singular points and general fibers of the Gauss map}
\label{sec:sing-points-gener}

The goal of this subsection is to show \autoref{thm:M-in-Xz},
which plays important roles in  later discussions.

Let $X \subset \PN$ be a cubic hypersurface which is not a cone.
Assume $\delta_X = 1$. Then the closure $F \subset X$ of a general fiber of the Gauss map $\gamma$
is a line intersecting $\Sing(X)$ (see \autoref{thm:FnSing-codim1}).
We consider irreducible closed subsets $Z_1, \dots, Z_{\kappa} \subset \Sing(X)$ as follows.

\begin{defn}\label{thm:def-Z}
  Let $X\spcirc \subset X$ be a non-empty open subset consisting of
  smooth points $x$ of $X$ such that $\fib{x}$ is a line.
  We set
  \[
    \hat{\mathscr{Z}} =
    \set*{(x, z) \in  X\spcirc \times \Sing(X)}{z \in  \fib{x}}
  \]
  with the projection $\pr_i$ from $\hat{\mathscr{Z}}$ to the $i$-th factor ($i=1,2$).
  Let $\mathscr{Z}_1, \dots, \mathscr{Z}_{\kappa}$ be
  the irreducible components of $\hat{\mathscr{Z}}$
  with an integer $\kappa \geq 1$.
  By shrinking $X\spcirc$ if necessarily,
  we may assume that
  $\pr_1|_{\mathscr{Z}_k}: \mathscr{Z}_k \rightarrow X\spcirc$
  is dominant for each $1 \leq k \leq \kappa$.
  Since $\pr_1|_{\mathscr{Z}_k}$ is generically finite,
  it follows $\dim \mathscr{Z}_k = N-1$.

  We set $Z_k \subset \Sing(X)$ to be the closure of the image of
  $\pr_2|_{\mathscr{Z}_k}: \mathscr{Z}_k \rightarrow \Sing(X)$.
  For general $x \in  X$ and for each $k$,
  the intersection $\fib{x} \cap Z_k \neq \emptyset$ and
  \begin{equation*}
    \fib{x} \cap \Sing(X) = \bigcup_{1 \leq k \leq \kappa} \fib{x} \cap Z_k.
  \end{equation*}
  For general $z \in  Z_k$,
  we set
  \[
    X_{z} = \overline{\pr_1(\pr_2^{-1}(z))} \subset X,
  \]
  which consists of general fibers of $\gamma$ passing through $z$;
  in  particular, $X_{z}$ is a cone with vertex $z$.
  Since $z \notin Z_{k'}$ for all $k' \neq k$,
  each irreducible component of $X_{z}$ is of dimension $N-1-\dim Z_k$.
\end{defn}

\begin{rem}
  We have $\dim Z_k > 0$ since $X$ is not a cone.
  If $\Sec(Z_k) \neq X$, then
  $\fib{x} \cap Z_k$ is the set of a point for general $x \in  X$.
\end{rem}
\begin{rem}\label{thm:xi-2-case}
  It holds $\kappa = 1$ if $X \neq \Sec(\Sing(X))$.
  This is because, in the case of $\kappa \geq 2$,
  since the closure of a general fiber of $\gamma$ intersects
  $Z_k$ for each $1 \leq k \leq \kappa$,
  we have $X = \Join(Z_{k_1}, Z_{k_2})$ for $k_1 \neq k_2$ and then $X = \Sec(\Sing(X))$.

  An explicit calculation of $Z = Z_1$ with $X \neq \Sec(\Sing(X))$
  have been seen in \autoref{thm:ex-calc-Z}.
\end{rem}

We often use the following description of lines and singular points of $X$.

\begin{lem}\label{thm:3rd-line}
  Let $s_1, s_2 \in  \Sing(X)$ and let $l_1 \subset X$ be a line
  such that $s_1 \in  l_1$ and $s_2 \notin l_1$.
  Set $M = \lin{l_1s_2} \subset \PN$, the $2$-plane spanned by $l_1$ and $s_2$.
  If $M \not\subset X$,
  then one of the following holds.
  \begin{enumerate}[\noindent\normalfont{}(i)]
  \item $\Sing(X \cap M)$ is the line $\lin{s_1s_2}$, and
    $X \cap M$ is set-theoretically equal to $\lin{s_1s_2} \cup l_1$.
  \item $\Sing(X \cap M)$ is the set of $3$ points $\Set{s_1, s_2, l_1\cap l_2}$,
    and $X \cap M$ is equal to $\lin{s_1s_2} \cup l_1 \cup l_2$,
    where $l_2 \subset M$ is a line
    such that $s_2 \in  l_2$ and $s_1 \notin l_2$.
  \end{enumerate}
\end{lem}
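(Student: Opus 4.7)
The plan is to restrict $X$ to the 2-plane $M$ and read off the structure of the resulting plane cubic. Since $M \not\subset X$ and $\deg X = 3$, the scheme $X \cap M$ is a cubic divisor on $M \cong \PP^2$. By hypothesis $l_1 \subset X \cap M$. I would next invoke the length-$4$ argument recalled in the introduction, applied to the singular points $s_1, s_2$ (which are distinct, since $s_1 \in l_1$ while $s_2 \notin l_1$): it shows $\lin{s_1s_2} \subset X$, and hence $\lin{s_1s_2} \subset X \cap M$. Because $s_2 \in \lin{s_1s_2} \setminus l_1$, the two lines $l_1$ and $\lin{s_1s_2}$ are distinct, so as divisors on $M$ one can write
\[
X \cap M = l_1 + \lin{s_1s_2} + l_2
\]
for a uniquely determined line $l_2 \subset M$. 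The whole problem reduces to identifying this residual line $l_2$.

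Next I would record that $s_1, s_2$ lie in $\Sing(X \cap M)$, since any point of $\Sing(X)$ that also lies in $M$ is automatically singular on $X \cap M$ (the partials of the restriction of the cubic form vanish there). A short case analysis on $l_2$ then finishes the argument. If $l_2 = l_1$, the divisor $2l_1 + \lin{s_1s_2}$ has singular locus equal to the doubled line $l_1$, which forces $s_2 \in l_1$ against hypothesis, so this case is impossible. If $l_2 = \lin{s_1s_2}$, the set-theoretic support is $l_1 \cup \lin{s_1s_2}$ and the singular locus is the doubled line $\lin{s_1s_2}$: this is conclusion (i). Otherwise $l_1$, $\lin{s_1s_2}$, $l_2$ are three distinct lines in $M$, so $\Sing(X \cap M)$ is the triangle of pairwise intersections; since $s_1$ and $s_2$ are singular and lie on $\lin{s_1s_2}$, they must be $l_1 \cap \lin{s_1s_2}$ and $\lin{s_1s_2} \cap l_2$ respectively, which yields $s_2 \in l_2$ and, by the distinctness $l_2 \neq \lin{s_1s_2}$, also $s_1 \notin l_2$; this is conclusion (ii).

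There is no substantial obstacle in this argument. The lemma is essentially bookkeeping about how a reducible plane cubic containing two given lines through two given singular points can look, and the one genuine input is $\lin{s_1s_2} \subset X$ coming from the cubic length-$4$ argument, which produces the second mandatory line in the decomposition and thereby cuts the possibilities down to the two listed configurations.
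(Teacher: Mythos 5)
Your proposal is correct and follows essentially the same route as the paper: both restrict to $M$, use the length argument to place $\lin{s_1s_2}$ inside the plane cubic $X \cap M$, and identify the residual line of the degree-$3$ decomposition via the singularity of $X\cap M$ at $s_2$. Your version merely spells out the divisor-theoretic case analysis ($l_2 = l_1$, $l_2 = \lin{s_1s_2}$, or $l_2$ a third line) that the paper's terse proof leaves implicit.
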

\begin{proof}
  We have $\lin{s_1s_2} \cup l_1 \subset X \cap M$.
  Assume that (i) does not hold.
  Since $\deg (X \cap M) = 3$ and $s_2 \in  \Sing(X \cap M)$,
  there exists a line
  $l_2 \subset X \cap M$ passing through $s_2$ such that $l_2 \neq \lin{s_1s_2}$.
  Then the condition (ii) holds.
\end{proof}

\begin{lem}\label{thm:Cw-def-lem}
  Let $x \in  X$ be a general point such that
  $F:= \fib{x} \subset X$ is a line.
  Let $w \in  \Sing(X) \cap \TT_xX \setminus F$.
  Then $\lin{xw} \subset \lin{Fw} \subset X$.
\end{lem}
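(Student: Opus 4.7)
The plan is to reduce the statement to the classical ``intersection multiplicity exceeds degree'' criterion already used implicitly in the introduction to show $\Sec(\Sing(X))\subset X$. Specifically, I will prove that for every smooth point $y\in F$ the line $\lin{yw}$ lies in $X$, and then observe that these lines, as $y$ varies over $F$, sweep out the entire $2$-plane $\lin{F,w}$.

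First I would assemble the tangent-space picture along $F$. Because $F$ is a linear subvariety of $\PN$ contained in $X$ and containing smooth points, any line of $F$ through a smooth point $y\in F$ is tangent to $X$ at $y$, so $F\subset\TT_yX$. Because $F=\fib{x}$ is a fiber of the Gauss map $\gamma$, the map $\gamma$ is constant on the smooth locus of $F$, and hence $\TT_yX=\TT_xX$ for every smooth $y\in F$. Combined with the hypothesis $w\in\TT_xX$, this shows $\lin{yw}\subset\TT_yX$ for every such $y$.

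Next comes the degree count. For a smooth $y\in F$, the line $\lin{yw}$ meets $X$ at $y$ with multiplicity $\geq 2$ (tangency, from $\lin{yw}\subset\TT_yX$) and at $w\in\Sing(X)$ with multiplicity $\geq 2$; moreover $y\neq w$ because $w\notin F$. The total intersection length is therefore $\geq 4>3=\deg X$, which forces $\lin{yw}\subset X$. By \autoref{thm:FnSing-codim1} the intersection $F\cap\Sing(X)$ has codimension $1$ in $F$ and is thus a finite set, so $F$ contains infinitely many smooth points $y$; the union of the lines $\lin{yw}$ over such $y$ is the full $2$-plane $\lin{F,w}$, giving $\lin{Fw}\subset X$. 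The inclusion $\lin{xw}\subset\lin{Fw}$ is then immediate from $x\in F$.

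No serious obstacle is anticipated: each step is mechanical once the right line $\lin{yw}$ is chosen. The only point that deserves explicit verification is the constancy of $\gamma$ (equivalently of $\TT_{(-)}X$) on the smooth locus of $F$, but this is precisely the defining property of $F$ as the closure of a fiber of the Gauss map and is recorded in the discussion preceding the lemma.
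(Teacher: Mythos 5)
Your proof is correct, but it takes a different route from the paper's. The paper argues by contradiction at the level of the whole plane $M=\lin{Fw}$: assuming $M\not\subset X$, the constancy of $\TT_{(-)}X$ along $F$ forces $F\subset\Sing(X\cap M)$, and together with $w\in\Sing(X\cap M)$ this contradicts \autoref{thm:3rd-line}, which classifies the possible singular loci of the plane cubic $X\cap M$ when it contains a line. You instead work one line at a time: for each smooth $y\in F$ you get $\lin{yw}\subset\TT_yX=\TT_xX$, so the line meets the cubic with length $\geq 2+2=4>3$ ($y\neq w$ since $w\notin F$), hence $\lin{yw}\subset X$; sweeping over the dense open set of smooth points of $F$ and taking closures gives $\lin{Fw}\subset X$. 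Both arguments rest on the same two inputs (constancy of the tangent space along the Gauss fiber and $\deg X=3$), but yours is self-contained and avoids \autoref{thm:3rd-line} entirely, essentially rerunning the $2+2>3$ count from the introduction's proof that $\Sec(\Sing(X))\subset X$; the paper's version, by routing through \autoref{thm:3rd-line}, reuses a structural lemma that it needs again in several later arguments. Every step of your write-up checks out, including the point you flag: $F\cap X_{\mathrm{sm}}$ is contained in the Gauss fiber because the fiber is closed in $X_{\mathrm{sm}}$ and dense in $F$, and $F\cap\Sing(X)$ is finite by \autoref{thm:FnSing-codim1}.
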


\begin{proof}
  Assume
  $M := \lin{Fw} \not\subset X$,
  and take $z \in F \cap \Sing(X)$.
  Then $w,z \in  \Sing(X \cap M)$.
  Since $M \subset \TT_xX = \TT_{\tilde{x}}X$ for any
  $\tilde{x} \in  F \setminus \Sing(X)$,
  we have $F \subset \Sing(X \cap M)$,
  which contradicts \autoref{thm:3rd-line}.
\end{proof}

Note that $\Sing(X) \not\subset \TT_xX$ for general $x \in  X$ since $X$ is not a cone.
We next consider the cone $X_z \subset X$
given in  \autoref{thm:def-Z}.

\begin{lem}\label{thm:M-in-Xz-lem}
  In the setting of \autoref{thm:Cw-def-lem},
  take a point $z \in  F \cap Z_1$,
  a general point $w \in  Z_1 \cap \TT_xX \setminus F$,
  and the $2$-plane $M := \lin{Fw} \subset X$.
  Let $\tilde{x} \in  M \setminus (\Sing(X) \cup \lin{wz} \cup F)$
  and assume that the line
  $\lin{\tilde{x}z} \subset M$
  is contracted to a point under $\gamma$.
  Then
  $M \subset X_z$.
\end{lem}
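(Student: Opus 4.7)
The plan is to reduce the problem to showing that every line through $z$ lying in $M$ is contracted by $\gamma$: since $X_z$ is the cone with vertex $z$ swept out by general fibers of $\gamma$ through $z$, showing each such line is (the closure of) such a fiber will yield $M\subset X_z$. First, I will choose homogeneous coordinates with $z=[1:0:\cdots:0]$, $w=[0:1:0:\cdots:0]$, $x=[0:0:1:0:\cdots:0]$, $M=\{x_3=\cdots=x_N=0\}$, so that $F=\{x_1=0\}\cap M$, $\lin{wz}=\{x_2=0\}\cap M$, and $\tilde x=[a:b:c:0:\cdots:0]$ with $b,c\neq 0$. Since $M\subset X$, the defining cubic $F$ lies in $(x_3,\dots,x_N)$, and since $z,w\in\Sing(X)$, the restrictions $H_i:=F_{x_i}|_M$ for $i\geq 3$ are quadratic forms in $x_0,x_1,x_2$ without $x_0^2$ or $x_1^2$ terms. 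Writing $H_i=\tilde a_i x_0x_1+\tilde b_i x_0x_2+\tilde c_i x_1x_2+\tilde d_i x_2^2$, I collect the coefficients into vectors $\tilde A,\tilde B,\tilde C,\tilde D\in\CC^{N-2}$.

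Parametrizing lines through $z$ in $M$ by $[p:q]\in\PP^1$ (so that $F$, $\lin{wz}$, $\lin{\tilde xz}$ correspond to $[0:1]$, $[1:0]$, $[b:c]$), a direct computation shows that such a line is contracted iff $(p\tilde A+q\tilde B)\parallel(p\tilde C+q\tilde D)$ for $q\neq 0$, and automatically for $q=0$ (the case $\lin{wz}$, contracted because both endpoints are singular so each $F_{x_i}$ restricts to a multiple of $st$). Contraction of $F$ yields $\tilde B=\lambda\tilde D$, and contraction of $\lin{\tilde xz}$ yields $b\tilde A+c\tilde B=\mu(b\tilde C+c\tilde D)$, for scalars $\lambda,\mu$. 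The vectors $\tilde C,\tilde D$ are linearly independent: otherwise $\lin{xw}$ would be contracted, placing $w$ into the general fiber $\overline{\gamma^{-1}(\TT_xX)}=F$, contradicting $w\notin F$.

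The main obstacle is to show $\mu=\lambda$, since the two contraction hypotheses alone allow a one-parameter ambiguity in the relations among $\tilde A,\tilde B,\tilde C,\tilde D$. To eliminate this I introduce the auxiliary line $L':=\{\mu x_0+x_2=0\}\cap M$, which contains $w$. Substituting $x_2=-\mu x_0$ into $H_i$ and applying the derived relations yields $H_i|_{L'}=(\mu-\lambda)\tilde d_i\cdot x_0(\tfrac{c}{b}x_1+\mu x_0)$; so if $\mu\neq\lambda$ then $L'\not\subset\Sing(X)$ and $\gamma$ sends every smooth point of $L'$ to $[\tilde D]=\TT_xX$, placing $L'\subset\overline{\gamma^{-1}(\TT_xX)}$. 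But since $x$ is general, $\TT_xX$ is a general point of $X^*$ and its Gauss fiber is a linear variety of dimension $\delta_X=1$, i.e., the single line $F$; since $w\in L'\setminus F$, this contradicts the linearity (and irreducibility) of the general fiber, forcing $\mu=\lambda$.

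Once $\mu=\lambda$, the relations collapse to $\tilde a_i=\lambda\tilde c_i$ and $\tilde b_i=\lambda\tilde d_i$, so each $H_i$ factors as $(\lambda x_0+x_2)(\tilde c_i x_1+\tilde d_i x_2)$. Cancelling this common linear factor realises $\gamma|_M$ as the linear map $[x_0:x_1:x_2]\mapsto[\tilde C x_1+\tilde D x_2]$, whose fibers are exactly the pencil of lines through $z=[1:0:0]$ in $M$. Hence every line through $z$ in $M$ is contracted by $\gamma$; for a general such line $L$, its image is a general point of $X^*$ whose Gauss fiber is a single line equal to $L$, so $L$ is itself a general fiber of $\gamma$ passing through $z$ and thus $L\subset X_z$. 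Taking the closure over this pencil yields $M\subset X_z$.
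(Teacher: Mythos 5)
Your proof is correct in substance but takes a genuinely different route from the paper's. The paper argues synthetically: it places $M$ inside the $(N-2)$-dimensional cone $C_w$ of \autoref{thm:Cw}, uses the cone structure (tangent spaces constant along rulings through $w$) to show that a general line $l\subset M$ through $w$ has $\gamma(l)$ equal to the \emph{line} $(\TT_{\alpha}C_w)^*$, and then uses the two contracted lines $F$ and $\lin{\tilde{x}z}$ to force $\gamma(l_1)=\gamma(l_2)$, hence $\dim\gamma(M)=1$, after which the fibers are identified as the pencil through $z$. You instead compute $\gamma|_M$ explicitly in coordinates, reduce the two contraction hypotheses to the relations $\tilde B=\lambda\tilde D$ and $b\tilde A+c\tilde B=\mu(b\tilde C+c\tilde D)$, and close the remaining one-parameter ambiguity with the auxiliary line $L'\ni w$: if $\mu\neq\lambda$ then $L'$ maps entirely to $\gamma(x)$, forcing $L'\subset F$ and contradicting $w\notin F$. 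This avoids $C_w$ altogether and actually yields more than the paper's argument, namely the factorization $H_i=(\lambda x_0+x_2)(\tilde c_i x_1+\tilde d_i x_2)$ exhibiting $\gamma|_M$ as the linear projection from $z$. One small repair is needed in your last step: the image of a general line $L\ni z$ in $M$ is a general point of the line $\gamma(M)$, not of the $(N-2)$-dimensional $X^*$, so you cannot invoke generality in $X^*$ to get that its Gauss fiber is a single line. The fix is immediate: since $x\in M\cap X\spcirc$ and $X\spcirc$ is open, a general $y\in M$ lies in $X\spcirc$, so $\fib{y}$ is a line; it contains the contracted line $\lin{yz}$, hence equals it, and $y\in X_z$ then follows from the definition of $X_z$.
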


Before proving the above lemma, we prepare a notation.

\begin{defn}\label{thm:Cw}
  Let $X\spcirc \subset X$ be a non-empty open subset
  consisting of smooth points of $X$.
  Then we set
  \[
    \mathscr{\hat{C}} = \set*{(x, w) \in  X\spcirc \times Z_1}{w \in  \TT_xX}
  \]
  with the projection $\pr_i$ from $\mathscr{\hat{C}}$ to the $i$-th factor ($i = 1, 2$).
  By shrinking $X\spcirc$ if necessarily,
  we may assume that $\pr_1|_{\mathscr{C}}: \mathscr{C} \rightarrow X\spcirc$ is dominant 
  for each irreducible component $\mathscr{C} \subset \hat{\mathscr{C}}$.
  Considering a general $(x, w) \in  \mathscr{C}$ which is not contained in 
  any other irreducible components of $\mathscr{\hat{C}}$,
  since the fiber of $\pr_1$ at $x$ is isomorphic to $Z_1 \cap \TT_xX$,
  we have $\dim \mathscr{C} = N+\dim(Z_1)-2$.

  For general $w \in  Z_1$,
  we set $C_w \spcirc \subset X$ to be an irreducible component of
  \[
    \pr_1(\pr_2^{-1}(w)) = \set*{x \in  X\spcirc}{w \in  \TT_xX}.
  \]
  It follows from \autoref{thm:Cw-def-lem}
  that
  $C_w := \overline{C_w\spcirc} \subset X$ is a cone with vertex $w$,
  where $\dim C_w = N-2$.
  Then $X$ is \emph{swept out} by $C_w$'s with general $w \in  Z_1$,
  that is, $X$ is equal to the closure of the union of $C_w$'s with general $w \in  Z_1$.
\end{defn}

For a linear variety $A \subset \PN$,
we denote by $A^* \subset \Pv$ the set of hyperplanes containing $A$.
Then $A^*$ is an $(N-1-\dim A)$-plane of $\Pv$.

\begin{rem}
  We have $\dim (\gamma(C_w)) = N-3$;
  in  particular, $C_w$ is swept out by lines $\fib{u}$
  with general $u \in  C_w$.

  The reason is as follows.
  Since $\dim X^* = N-2$ and $\gamma(C_w) \subset X^* \cap w^*$,
  we have $\dim \gamma(C_w) \leq N-3$.
  Since $C_w$'s sweep $X$, a general point $u \in  C_w$ is general in  $X$, that is,
  $\fib{u}$ is a line. Considering $\gamma|_{C_w}$, we have
  $\fib{u} \subset C_w$ and $\dim (\gamma(C_w)) = N-3$.
\end{rem}

\begin{proof}[Proof of \autoref{thm:M-in-Xz-lem}]
  We may take $(x, w)$ as a general pair in $\mathscr{\hat{C}}$, and set $F = \fib{x} \subset X$.
  Then $M \subset C_w$.
  Set $\tilde{F} = \lin{\tilde{x}z} \subset M$.
  By assumption, we have $\tilde{F} \neq \lin{wz}$ and $\tilde{F} \neq F$.
  Let $l_1, l_2 \subset M$ be two general lines passing through $w$,
  and fix a general point $\alpha_i \in  l_i$ for $i=1,2$.
  Then $\gamma(l_i) \subset (\TT_{\alpha_i}C_w)^*$ in  $\Pv$.
  (This is because,
  since $C_w$ is a cone with vertex $w$,
  $\TT_{\alpha_i}C_w = \TT_{y}C_w \subset \TT_{y}X$
  for general $y \in  l_i$.)
  Since $\dim C_w = N-2$, we have $\dim ((\TT_{\alpha_i}C_w)^*) = 1$,
  which implies that $\gamma(l_i)$ is indeed the line
  $(\TT_{\alpha_i}C_w)^*$.
  On the other hand, since $M = \PP^2$,
  we have $l_i \cap F \neq \emptyset$ and $l_i \cap \tilde{F} \neq \emptyset$.
  Then $\gamma(l_i) = \lin{\gamma(F)\gamma(\tilde{F})}$ in  $\Pv$.
  Therefore, $\gamma(l_1) = \gamma(l_2)$. By the generality of choices of $l_1,l_2$,
  we have $\dim \gamma(M) = 1$.

  Since $\dim \gamma(M) = 1$, the line $E := \fib{y}$ is contained in  $M$
  for general $y \in  M$. Then
  $F \cap E = \set{s}$ and
  $\tilde{F} \cap E = \set{\tilde{s}}$
  with some $s, \tilde{s} \in  \Sing(X)$.
  We have $s = \tilde{s}$
  (otherwise, $y \in  E = \lin{s\tilde{s}}$,
  contradicting the generality of $y$),
  and then the point is equal to the intersection point $z$
  of $F$ and $\tilde{F}$.
  It follows $y \in  {X}_z$.
  Therefore
  $M \subset {X}_z$.
\end{proof}

Now let us show the following key technical result,
where $Z_1 \subset \Sing(X)$ and $X_z \subset X$ are as in  \autoref{thm:def-Z}.
\begin{prop}\label{thm:M-in-Xz}
  Let $X \subset \PN$ be a cubic hypersurface with $\delta_X = 1$, which is not a cone.
  Assume that $Z_1$ is non-linear and $\Sec(Z_1) \neq X$.
  For a general point $z \in Z_1$,
  assume that $Z_1$ is swept out by $Z_1 \cap \TT_xX$'s with general $x \in  X_z$.
  Let $F \subset X$ be the closure of a general fiber of $\gamma$
  passing through $z$.
  For a smooth point $x \in  F$ and for general $w \in  Z_1 \cap \TT_xX$, we set
  $M = \lin{wF}$, which is a $2$-plane contained in  $X$
  as in  \autoref{thm:Cw-def-lem}.
  Then we have $M \subset X_z$.
  (Hence $M$ consists of fibers of $\gamma$ near $F$ passing through $z$.)
\end{prop}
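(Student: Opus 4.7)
The plan is to invoke \autoref{thm:M-in-Xz-lem}: it suffices to produce a point $\tilde{x} \in M \setminus (\Sing(X) \cup \lin{wz} \cup F)$ such that the line $\lin{\tilde{x}z}$ is contracted by $\gamma$ to a single point. Equivalently, I would exhibit a second fiber $F_0$ of $\gamma$ contained in $M$, distinct from $F$ and from $\lin{wz}$ and passing through $z$; then taking $\tilde{x}$ a general point of $F_0$ and applying the lemma yields $M \subset X_z$.

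The first step is to use the sweeping hypothesis to produce a candidate fiber $F_0$ of $\gamma$ through $z$ with $w \in \TT_{F_0}X$. Since $w \in Z_1$ is general (as a general point of the hyperplane section $Z_1 \cap \TT_xX$) and $Z_1 = \overline{\bigcup_{x' \in X_z^\circ}(Z_1 \cap \TT_{x'}X)}$, the subvariety $\set*{x' \in X_z}{w \in \TT_{x'}X}$ is non-empty and positive-dimensional; pick $x_0$ in this set with $x_0 \notin F$ and set $F_0 := \fib{x_0}$. Then $F_0$ is a fiber of $\gamma$ through $z$ with $F_0 \neq F$ and $w \in \TT_{F_0}X$, and by \autoref{thm:Cw-def-lem} the 2-plane $M_0 := \lin{F_0w}$ is contained in $X$; moreover both $M$ and $M_0$ lie in the $(N-2)$-dimensional cone $C_w$ and share the line $\lin{wz}$.

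The main step, and the principal obstacle, is to show that $F_0$ can in fact be chosen with $F_0 \subset M$ (equivalently $M_0 = M$); then $F_0$ supplies the required secondary contracted line through $z$ in $M$. If $M_0 \neq M$, one may form the 3-plane $\Pi := \lin{M, F_0}$ and (assuming $\Pi \not\subset X$) write the cubic $X \cap \Pi$ as $M + M_0 + M''$ for some 2-plane $M'' \subset X$; the plan is to analyze this configuration, in particular the triple intersection point $p = M \cap M_0 \cap M'' \in \lin{wz}$ and the line $M \cap M''$, in order either to derive a contradiction with $M_0 \neq M$ or to locate a contracted line through $z$ inside $M$ coming from $M''$. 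A complementary route, modeled on the proof of \autoref{thm:M-in-Xz-lem}, works inside $C_w$: for a ruling line $l$ through $w$ in $M$, the cone property gives $\gamma(l) \subset (\TT_\alpha C_w)^*$, a line in $\Pv$, and the task is to use the sweeping hypothesis to force this line to be independent of $l$, whence $\dim \gamma(M) \leq 1$; the closing argument in the proof of \autoref{thm:M-in-Xz-lem} then identifies the fibers of $\gamma|_M$ as a pencil through $z$, yielding $M \subset X_z$. The crux in either approach is that naive dimension counting does not force $F_0 \subset M$; the non-linearity of $Z_1$ and the sweeping hypothesis must be exploited in a subtle geometric way.
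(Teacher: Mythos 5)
You correctly identify the reduction: by \autoref{thm:M-in-Xz-lem} it suffices to exhibit a second line through $z$, contracted by $\gamma$, inside $M$ (or inside planes degenerating to $M$). But the proposal stops exactly where the real work lies, and you say so yourself: neither of your two suggested routes is carried out. The first route is moreover not available in general --- for $\Pi=\lin{M,F_0}$ a $3$-plane with $\Pi\not\subset X$, the cubic surface $X\cap\Pi$ contains $M$ as a component, but the residual quadric surface need not split into two further planes, so the decomposition $M+M_0+M''$ cannot be assumed. The second route is essentially circular: bounding $\dim\gamma(M)$ is what \autoref{thm:M-in-Xz-lem} already accomplishes \emph{once} a second contracted line is supplied, which is precisely the missing input. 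As written, the proof is incomplete at its main step.

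The paper closes this gap not by forcing a second fiber into the given plane $M$, but by a dimension count on the family of cones $\Cone_{\lin{x'z}}(Z_1\cap\TT_{x'}X)$ as $x'$ ranges over $X_z$. Each such cone lies in $X$ by \autoref{thm:Cw-def-lem} and has dimension $\dim Z_1+1$ --- this is where the non-linearity of $Z_1$ and $\Sec(Z_1)\neq X$ are used --- so the incidence variety $J=\set*{(x',p)}{p\in\Cone_{\lin{x'z}}(Z_1\cap\TT_{x'}X)}$ has dimension $N-1$, while its image $Y\subset X$ is a cone with vertex $z$ and hence has dimension $<N-1$ because $X$ is not a cone. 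Therefore every component of the fiber of $J\to Y$ over the point $x$ is positive-dimensional, which yields a positive-dimensional family of pairs $(\tilde x,\tilde w)$ with $\tilde x\in X_z$, $\tilde x\neq x$, and $x\in\lin{\tilde w\tilde x z}$. The plane $\lin{\tilde w\tilde x z}=\lin{\tilde w x z}$ then contains the two contracted lines $\lin{xz}$ and $\lin{\tilde x z}$, so \autoref{thm:M-in-Xz-lem} gives $\lin{\tilde w x z}\subset X_z$; finally $M=\lin{wxz}$ is recovered as a limit of such planes because $w$ lies in the closure of the locus swept by the $\tilde w$'s. Note this last point: the second contracted line is produced in nearby planes $\lin{\tilde w xz}$, not necessarily in $M$ itself, so even the target of your main step ($F_0\subset M$) is stronger than what is actually needed or proved.
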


Note that, though $Z_1$ is swept out by $Z_1 \cap \TT_xX$'s with general $x$ in  $X$,
the same statement does \emph{not} always hold with general $x$ in  ``$X_z$''
(see \autoref{rem:join-two-varieties}).
Thus the assumption in  the proposition is necessary.

\begin{proof}[Proof of \autoref{thm:M-in-Xz}]
  Let $z \in  Z_1$ be general.
  For general $x \in  X_z$, we consider
  \begin{equation}\label{eq:large-cone-in-X}
    \Cone_{\lin{xz}}(Z_1 \cap \TT_xX) \subset \PN,
  \end{equation}
  which is a cone with vertex $\lin{xz}$,
  and is in  fact contained in  $X$
  by \autoref{thm:Cw-def-lem}
  since it is the closure of the union of
  $2$-planes $\lin{wxz}$'s
  with general $w \in  Z_1 \cap \TT_xX$.
  \begin{claim}
    The above cone~\ref{eq:large-cone-in-X} is of dimension $\dim(Z_1)+1$.
  \end{claim}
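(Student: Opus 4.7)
My plan is to prove the claimed dimension $\dim(Z_1)+1$ by establishing the natural upper bound and then ruling out any strict inequality by contradiction, using the non-linearity of $Z_1$.

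The upper bound $\dim C \leq \dim(Z_1)+1$ is standard. Since $x \in X_z$ is general and $X_z$ is a cone with vertex $z$ ruled by fibers of $\gamma$, the vertex $\lin{xz}$ coincides with the fiber line $F$ and is one-dimensional. Moreover, the sweeping hypothesis forces $Z_1 \not\subset \TT_xX$ for general $x \in X_z$, so $\dim(Z_1 \cap \TT_xX) = \dim Z_1 - 1$. The cone-dimension formula then yields $\dim C \leq 1 + (\dim Z_1 - 1) + 1 = \dim Z_1 + 1$.

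For the lower bound I argue by contradiction, assuming $\dim C \leq \dim Z_1$. Equivalently, the linear projection $\pi : \PN \dashrightarrow \PP^{N-2}$ from $F$, restricted to $Z_1 \cap \TT_xX$, fails to be generically finite. Thus for a general $w \in Z_1 \cap \TT_xX$, the $2$-plane $M_w := \lin{F, w}$ (which lies in $\TT_xX$, as $w \in \TT_xX$) meets $Z_1$ in a positive-dimensional set. If $M_w \subset Z_1$ for general $w$, then $C = \overline{\bigcup_w M_w} \subset Z_1 \subset \Sing(X)$, contradicting the fact that $F \subset C$ contains the smooth point $x$. Otherwise, $M_w \cap Z_1$ is a proper plane curve $Y_w \subsetneq M_w$ passing through both $z$ and $w$. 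As $w$ varies generically, the curves $Y_w$ foliate a dense subset of $Z_1 \cap \TT_xX$ by curves all passing through the fixed point $z$, and as $x$ ranges over $X_z$ the sweeping hypothesis propagates this structure to all of $Z_1$. Since $z$ is a general point of $Z_1$, this forces $Z_1$ to be a cone with every point of itself as vertex, hence a linear variety, contradicting the non-linearity hypothesis.

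The main obstacle lies in executing the second case of the contradiction when the curves $Y_w$ have degree at least two. Here one must use the fact that $M_w \subset X$, invoking the plane-section analysis of \autoref{thm:3rd-line} together with the Gauss map structure on $M_w$ (each $M_w$ is covered by fibers of $\gamma|_{M_w}$, each meeting $\Sing X$), to extract a line $\lin{zw} \subset Z_1$ from each $Y_w$. This is what ultimately produces the cone structure $\Cone_z(Z_1) \subset Z_1$ and completes the contradiction.
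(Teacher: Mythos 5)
There is a genuine gap in the second case of your contradiction argument, and it is precisely the step you defer to the final paragraph. When $Y_w=M_w\cap Z_1$ is a plane curve through $z$ and $w$, there is in general no line $\lin{zw}$ to ``extract'' from it: $Y_w$ could be an irreducible conic through $z$ and $w$, which contains no line at all. The tool you propose, \autoref{thm:3rd-line}, is unavailable here, since that lemma only treats planes $M$ with $M\not\subset X$, whereas $M_w\subset X$ by \autoref{thm:Cw-def-lem}; and the intermediate assertion that a variety covered by curves through a general point of itself is a cone is false (every variety is covered by curves through any of its points). A telling symptom is that your argument never uses the hypothesis $\Sec(Z_1)\neq X$, which is exactly what is needed to dispatch the hard subcase: if the one-dimensional part of $M_w\cap Z_1$ is not the line $\lin{zw}$, then either a general line of $M_w$ meets $Z_1$ in two distinct points, or $M_w=\Cone_z(D)$ for a line $D\subset Z_1$ with $z\notin D$; in both situations $M_w\subset\Sec(Z_1)$, and letting $x$ and $z$ vary forces $X=\Sec(Z_1)$, a contradiction. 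Only when that curve \emph{is} the line $\lin{zw}$ does your cone argument apply (then $\Cone_z(Z_1\cap\TT_xX)\subset Z_1$, and sweeping makes $Z_1$ a cone with a general point of itself as vertex, hence linear). So your route can be completed, but not with the tools you name. A smaller point: the sweeping hypothesis does not force $Z_1\not\subset\TT_xX$ --- it is trivially satisfied when $Z_1\subset\TT_xX$ --- though this only affects the upper bound, which is not the substantive half of the claim.

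For comparison, the paper's proof is a direct two-step coning that avoids the plane-curve analysis entirely: writing the cone as $\Cone_x(\Cone_z(Z_1\cap\TT_xX))$, it first uses the sweeping hypothesis together with the non-linearity of $Z_1$ to see that $Z_1\cap\TT_xX$ is not a cone with vertex $z$, so that $\Cone_z(Z_1\cap\TT_xX)$ has dimension $\dim Z_1$; it then uses $\Sec(Z_1)\neq X$ to conclude $x\notin\Cone_z(Z_1)$, so the outer coning raises the dimension once more.
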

  \begin{proof}
    Set $r = \dim(Z_1)$.
    For general $x \in  X_z$,
    the intersection $Z_1 \cap \TT_xX$ is 
    not a cone with vertex $z$.
    Otherwise, since $Z_1$ is swept out by $Z_1 \cap \TT_xX$'s with general $x \in  X_z$,
    the locus $Z_1$ is itself a cone with vertex $z$; by the generality of $z$,
    it means that $Z_1$ is a linear variety, a contradiction.

    Thus $\dim (\Cone_z(Z_1 \cap \TT_xX)) = r$.
    Since $\Sec(Z_1) \neq X$, we have $X_z \not\subset \Cone_z(Z_1)$.
    Thus $x \notin \Cone_z(Z_1)$, which implies
    that the cone~\ref{eq:large-cone-in-X} is of dimension $r+1$.
  \end{proof}

  Let $X\spcirc \subset X$ be a non-empty open subset consisting of smooth points,
  and let $H \subset \PN$ be a general hyperplane such that $z \notin H$.
  Then $\gamma(X_z) = \gamma(X_z \cap H)$
  and $X_z = \Cone_z (X_z \cap H)$.
  We consider
  \[
    Y := \overline{\bigcup_{x \in  X_z\spcirc \cap H} \Cone_{\lin{xz}}(Z_1 \cap \TT_xX)} \subset X.
  \]
  In addition, we set
  \[
    I = \set*{(x,p,w) \in  (X_z\spcirc \cap H) \times X \times Z_1}
    {w \in  Z_1 \cap \TT_xX \setminus \lin{xz}\text{ and } p \in  \lin{wxz}}
  \]
  and
  \[
    J = \set*{(x,p) \in  (X_z\spcirc \cap H) \times X}
    {p \in  \Cone_{\lin{xz}}(Z_1 \cap \TT_xX)},
  \]
  with the projection
  $\rho: I \rightarrow J$ sending $(x,p,w) \mapsto (x,p)$
  and the projection
  $\phi: J \rightarrow Y$ sending $(x,p) \mapsto p$.

  Since the fiber of
  $J \rightarrow X_z \cap H: (x,p) \mapsto x$
  at $x \in  X_z \cap H$
  corresponds to the cone~\ref{eq:large-cone-in-X},
  and since $\dim X_z = N-1-\dim Z_1$,
  we have $\dim J = N-1$.
  Since $X$ is not a cone, we have $\dim Y < N-1$.
  Thus each irreducible component of
  a fiber of $J \rightarrow \overline{\phi(J)} = Y$ is of dimension $\geq 1$.

  Let $x \in  X_z$ be a general point.
  For general $w \in  Z_1 \cap \TT_xX$,
  we take $A \subset I$ to be an irreducible component of $\rho^{-1}(\phi^{-1}(x))$
  such that $(x,x,w) \in  A$.
  Then the image $\rho(A) \subset J$ is an irreducible component of $\phi^{-1}(x)$
  such that $(x,x) \in  \rho(A)$, where $\dim \rho(A) \geq 1$.
  In particular, for a general point $(\tilde{x},x,\tilde{w}) \in  A$,
  we have $\tilde{x} \neq x$.
  Let $\tilde{M} := \lin{\tilde{w}xz} = \lin{\tilde{w}\tilde{x}z} \subset X$.
  From \autoref{thm:M-in-Xz-lem},
  since two lines
  $\lin{xz}, \lin{\tilde{x}z} \subset \tilde{M}$
  are contracted to points under $\gamma$,
  we have 
  $\tilde{M} \subset X_z$.
  Set $A' \subset Z_1$ to be the closure of the image of $A$ under $(x,p,w) \mapsto w$.
  Then $\Cone_{\lin{xz}}(A') \subset X_z$. Since
  $M = \lin{wxz} \subset \Cone_{\lin{xz}}(A')$, the assertion follows.
\end{proof}

\subsection{Join of two varieties}
\label{sec:join-two-varieties}

In this subsection,
using techniques given in  \autoref{sec:sing-points-gener},
we study a cubic hypersurface $X \subset \PN$
which is the join of two varieties.
Our aim is to give the following characterization.

\begin{thm}\label{thm:char-Xjoin}
  Let $X \subset \PN$ be a cubic hypersurface
  such that $X = \Join(Z_1, Z_2)$ for
  two irreducible components $Z_1, Z_2$ of $\Sing(X)$.
  Assume $\delta_X=1$ and assume that $X$ is not a cone.
  Then $Z_i$ is a smooth quadric hypersurface
  of $\lin{Z_i} = \PP^{\dim(Z_i)+1}$ for $i=1,2$
  such that $Z_1 \cap Z_2 = \lin{Z_1} \cap \lin{Z_2} = \set{z_0}$
  for a point $z_0$.
\end{thm}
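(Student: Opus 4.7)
The plan is to extract rigidity from the combination $X = \Join(Z_1, Z_2)$, $\dim X = N - 1$, and $\delta_X = 1$. Introduce $n_i := \dim \lin{Z_i}$, $d_i := \dim Z_i$, $c_i := n_i - d_i$, and $\ell := \dim(\lin{Z_1} \cap \lin{Z_2})$ (with the convention $\dim \emptyset = -1$). The hypersurface condition yields $d_1 + d_2 = N - 2$, and non-degeneracy of $X$ in $\PN$ gives $n_1 + n_2 - \ell = N$; combining these yields $c_1 + c_2 = \ell + 2$. Since $X$ is not a cone, neither $Z_i$ is linear, so each $c_i \geq 1$, forcing $\ell \geq 0$ and hence $L := \lin{Z_1} \cap \lin{Z_2} \neq \emptyset$.

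Next I would pin down the tangent hyperplane $T = \TT_x X$ along a general fiber $F = \lin{z_1 z_2}$ of $\gamma$ (a line by \autoref{thm:FnSing-codim1}). By \autoref{thm:def-Z}, the cone $X_{z_i}$ agrees up to a dominant component with $\Cone_{z_i}(Z_{3-i})$, so its tangent space along $F$ at a smooth $x \in F$ is $\lin{z_i, \TT_{z_{3-i}} Z_{3-i}}$; both such tangent spaces lie inside $T$, so $T \supseteq V := \lin{\TT_{z_1} Z_1, \TT_{z_2} Z_2}$. Since $\dim V \leq d_1 + d_2 + 1 = N - 1 = \dim T$, equality $T = V$ holds with $\TT_{z_1} Z_1 \cap \TT_{z_2} Z_2 = \emptyset$. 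Consequently the induced rational map $\mu : Z_1 \times Z_2 \dashrightarrow X^*$, $(z_1, z_2) \mapsto T$, is dominant between varieties of the same dimension $N - 2$, and hence generically finite.

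The third step forces $c_i = 1$ and $\ell = 0$. A direct calculation in $\PN$ yields
\[
  T \cap \lin{Z_1} = \lin{\TT_{z_1} Z_1, \TT_{z_2} Z_2 \cap L},
\]
which is a hyperplane of $\lin{Z_1}$; dimension comparison gives $\dim(\TT_{z_2} Z_2 \cap L) = c_1 - 2$. If $c_1 \geq 2$, this intersection is positive-dimensional and varies non-trivially with $z_2$, producing a positive-dimensional family of $z_2$ that share the same $T$ for a fixed $z_1$, contradicting the generic finiteness of $\mu$. Therefore $c_1 = 1$ and, symmetrically, $c_2 = 1$, whence $\ell = 0$. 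So $L = \set{z_0}$ is a single point, and each $Z_i$ is a hypersurface of some degree $\delta_i$ in $\lin{Z_i} = \PP^{d_i + 1}$.

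Finally, a parametric computation in the spirit of \autoref{thm:ex-calc-Z} — writing a general point of $X$ as $s z_1 + t z_2$ with $z_i \in Z_i$ and eliminating $s, t$ via the resultant in the shared coordinate at $z_0$ — expresses the defining equation of $X$ explicitly. When $z_0 \in Z_1 \cap Z_2$, one obtains $\deg X = \delta_1 + \delta_2 - 1$; together with $\deg X = 3$ and $\delta_i \geq 2$ (as $Z_i$ is not linear), this forces $\delta_1 = \delta_2 = 2$, so each $Z_i$ is a quadric hypersurface. When $z_0 \notin Z_i$ for either $i$, the analogous computation yields $\deg X \geq 4$, contradicting $\deg X = 3$; hence $z_0 \in Z_1 \cap Z_2$, establishing $Z_1 \cap Z_2 = \set{z_0}$. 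Smoothness of $Z_i$ follows because a singularity $p \in \Sing(Z_i)$ would be the vertex of a quadric cone, and applying \autoref{thm:Cw-def-lem} with $w = p$ would then force $X$ to be a cone or raise $\delta_X$ above $1$, both excluded. The main obstacle is the generic-finiteness step in the third paragraph: the claim that $\TT_{z_2} Z_2 \cap L$ varies non-trivially with $z_2$ when $c_1 \geq 2$ must be verified with care, ruling out the degenerate possibility that $Z_2$ is itself a cone with vertex lying in $L$ (in which case the intersection would be constant).
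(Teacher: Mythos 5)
The decisive step of your argument---forcing $c_1=c_2=1$ in the third paragraph---does not go through. You correctly compute that for general $(z_1,z_2)$ one has $T\cap\lin{Z_1}=\lin{\TT_{z_1}Z_1,\,\TT_{z_2}Z_2\cap L}$ and hence $\dim(\TT_{z_2}Z_2\cap L)=c_1-2$, but the inference from ``$c_1\geq 2$, so this intersection is nonempty and varies with $z_2$'' to ``a positive-dimensional family of $z_2$ shares the same $T$'' is a non sequitur: if the subspaces $\TT_{z_2}Z_2\cap L$ \emph{vary} with $z_2$, that is perfectly compatible with generic finiteness of $\mu$, and if they were constant, $T=\lin{\TT_{z_1}Z_1,\TT_{z_2}Z_2}$ still depends on all of $\TT_{z_2}Z_2$, not merely on its trace in $L$ (so the degenerate case you flag at the end is not the real issue). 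To contradict generic finiteness you would need a positive-dimensional contact locus $\set*{z_2'}{\TT_{z_2'}Z_2\subset T}$, and nothing in your dimension count produces one. This is precisely where the paper has to work: it proves $\dim\lin{Z_1}=\dim Z_1+1$ by supposing the contrary, using \autoref{thm:join-sweepZ} to verify the sweeping hypothesis of the key technical result \autoref{thm:M-in-Xz}, concluding that the $2$-plane $M=\lin{wxz}$ lies in $X_z=\Cone_z(Z_2)$, and then running a Terracini computation inside $M$ to exhibit two distinct lines through $z$ lying in a single fiber of $\gamma$---contradicting that a general fiber is an irreducible line. Your proposal contains no substitute for this mechanism.

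Secondary issues: the degree formulas in your last paragraph ($\deg X=\delta_1+\delta_2-1$ when $z_0\in Z_1\cap Z_2$, and $\deg X\geq 4$ otherwise) are asserted rather than proved, yet they carry the weight of both the ``quadric'' conclusion and the conclusion $Z_1\cap Z_2=\set{z_0}$. The paper obtains these more cheaply once $c_i=1$ is known: since the cubic $X$ contains $\Sec(Z_1)=\lin{Z_1}=:L_1$, cutting with $K=\lin{L_1,x}$ for general $x$ gives $X\cap K=L_1\cup X'$ with $X'$ a quadric and $Z_1\subset L_1\cap X'$, so $Z_1$ is a quadric hypersurface of $L_1$ (smooth, since otherwise it is a cone and so is the join); and if $z_0\notin Z_1$, then $\lin{Z_2}\cap\Cone_z(Z_2)=Z_2$ for every $z\in Z_1$, so $\lin{Z_2}\not\subset X$, contradicting $\lin{Z_2}=\Sec(Z_2)\subset X$. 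Your smoothness argument via \autoref{thm:Cw-def-lem} is likewise vaguer than needed---a singular quadric is a cone, and a join with a cone is a cone, which is all that is required.
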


In order to use \autoref{thm:M-in-Xz}, we need to show a lemma.

\begin{lem}\label{thm:join-sweepZ}
  Let $X = \Join(Z_1, Z_2) \subset \PN$ be as in  \autoref{thm:char-Xjoin}.
  Assume $\dim (\lin{Z_1}) > \dim Z_1+1$,
  and let $z \in  Z_1$ be general.
  Then $Z_1$ is swept out by $Z_1 \cap \TT_{x}X$'s
  with general $x \in  X_z=\Cone_z(Z_2)$.
\end{lem}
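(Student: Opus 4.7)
The plan is to apply Terracini's lemma to $X=\Join(Z_1,Z_2)$ and then run a short dimension-incidence argument. For a general smooth point $x\in X_z$ lying on the Gauss fibre $\lin{zz_2}\subset X_z$ (with $z_2\in Z_2$), Terracini's lemma gives $\TT_xX=\lin{\TT_zZ_1,\TT_{z_2}Z_2}$, so $\TT_zZ_1\subseteq\TT_xX$ and hence $\gamma(X_z)$ is contained in the linear subvariety $(\TT_zZ_1)^*\subset\Pv$. The dimension identity $\dim X_z=N-1-\dim Z_1$ from \autoref{thm:def-Z} gives $\dim Z_1+\dim Z_2=N-2$, and $(\TT_zZ_1)^*$ has dimension $N-1-\dim Z_1=\dim Z_2+1$. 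Since a general Gauss fibre of $X$ through $z$ is a line, $\gamma|_{X_z}$ has one-dimensional fibres, so $\dim\gamma(X_z)=\dim Z_2$; hence $\gamma(X_z)$ is a hypersurface of $(\TT_zZ_1)^*$.

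The essential use of the hypothesis $\dim\lin{Z_1}>\dim Z_1+1$ is the following codimension count. The linear subvariety $\lin{Z_1}^*\subset(\TT_zZ_1)^*$ of hyperplanes containing $\lin{Z_1}$ has codimension $\dim\lin{Z_1}-\dim Z_1\geq 2$ in $(\TT_zZ_1)^*$. A hypersurface cannot lie inside a subvariety of codimension at least two, so $\gamma(X_z)\not\subset\lin{Z_1}^*$; translating back, for general $x\in X_z$ we have $\lin{Z_1}\not\subset\TT_xX$, and in particular $Z_1\not\subset\TT_xX$.

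For the conclusion, consider the incidence $I=\set*{(x,w)\in X_z\spcirc\times Z_1}{w\in\TT_xX}$. By the previous step, the generic fibre of $I\to X_z\spcirc$ is $Z_1\cap\TT_xX$ of dimension $\dim Z_1-1$, so $\dim I=(\dim Z_2+1)+(\dim Z_1-1)=N-2$. For the projection $I\to Z_1$, a general $w\in Z_1$ lies outside the proper subvariety $Z_1\cap\bigcap_{x\in X_z\spcirc}\TT_xX\subsetneq Z_1$, so the fibre $\set*{x\in X_z\spcirc}{w\in\TT_xX}$ is a proper subvariety of $X_z$; it is non-empty because the closed projective variety $\overline{\gamma(X_z)}\subset\Pv$ must meet the hyperplane $\set*{T\in\Pv}{w\in T}$, and a generic point of that intersection lies in the image of $\gamma|_{X_z\spcirc}$. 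Hence $I\to Z_1$ is dominant and taking closures yields $Z_1=\bigcup_{x\in X_z\spcirc}(Z_1\cap\TT_xX)$, as required. I expect the codimension argument of the middle paragraph to be the key technical step, since that is where the hypothesis is used essentially; it relies on $\gamma(X_z)$ achieving the expected dimension $\dim Z_2$ in $(\TT_zZ_1)^*$, which uses $\delta_X=1$ (so Gauss fibres are one-dimensional).
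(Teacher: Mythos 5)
Your first two paragraphs are sound and run parallel to the paper's setup: Terracini gives $\TT_zZ_1\subset\TT_xX$ for general $x\in X_z$, so $\gamma(X_z)$ is a hypersurface of $K^*:=(\TT_zZ_1)^*$. The gap is in the third paragraph, at the one point where the real difficulty of the lemma sits: the non-emptiness of the fibre of $I\to Z_1$ over a \emph{general} $w\in Z_1$, i.e.\ the dominance of $I\to Z_1$, which is exactly the statement to be proved. Your justification --- that $\overline{\gamma(X_z)}$ meets the hyperplane $w^*$ and that ``a generic point of that intersection lies in the image of $\gamma|_{X_z\spcirc}$'' --- is unsupported: $\overline{\gamma(X_z)}\cap w^*$ has dimension $\dim\gamma(X_z)-1$, which is precisely the dimension available to the boundary $\overline{\gamma(X_z)}\setminus\gamma(X_z\spcirc)$, so the entire intersection may lie outside the honest image. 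That this really happens is shown by $X=\Join(Q_1,Q_2)$ in \autoref{rem:join-two-varieties}: there every intermediate fact you establish still holds ($\lin{Q_1}\cap\TT_xX=\TT_zQ_1$, hence $Q_1\not\subset\TT_xX$ for general $x\in X_z$, and $\dim I=N-2$), yet $Q_1\cap\TT_xX=Q_1\cap\TT_zQ_1$ is a \emph{fixed} hypersurface of $Q_1$ and the sweeping fails. So no argument that uses the hypothesis only through the consequence ``$\gamma(X_z)\not\subset\lin{Z_1}^*$'' can be completed; the codimension-two count extracts too weak a conclusion from $\dim\lin{Z_1}>\dim Z_1+1$.

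The paper closes exactly this gap with two ingredients your proposal is missing. Writing $r=\dim Z_1$ and $K=\TT_zZ_1$: first, a biduality argument applied to the hypersurface $\Cone_{K}(Z_2)$, together with \autoref{thm:singX-N-2}, shows that the hypersurface $\gamma(X_z)\subset K^*$ is not an $(N-r-2)$-plane, hence cannot be contained in any single hyperplane $L^*$ of $K^*$ (nor in a finite union of them). Second, the hypothesis $\dim\lin{Z_1}>r+1$ is used to produce a positive-dimensional family of $(r+1)$-planes $L=\lin{z'K}$ with $z'\in Z_1$ general; the duals $L^*$ of the members of a curve in this family are hyperplanes of $K^*$ whose union is all of $K^*$, so a general $\gamma(x)\in\gamma(X_z)$ must lie on $L^*$ for some \emph{moving} $L=\lin{z'K}$, i.e.\ $z'\in\TT_xX$ with $z'$ moving over $Z_1$. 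In the quadric example both ingredients degenerate ($\lin{z'K}=\lin{Q_1}$ is constant), which is why the hypothesis is essential in precisely this way; your proof needs something equivalent to these two steps.
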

\begin{proof}
  Since $X$ is not a cone, the locus $Z_1$ is not a linear variety.
  Set $r = \dim Z_1$.
  Set $K = \TT_zZ_1$, an $r$-plane of $\PN$.
  For general $x \in  X_z$,
  we have $K \subset \TT_xX$.
  This is because,
  since $x \in  \lin{z\tilde{z}}$ with some $\tilde{z} \in  Z_2$,
  it follows from Terracini's lemma that
  $\TT_{x}X = \lin{\TT_{z}Z_1, \TT_{\tilde{z}}Z_2}$.
  Hence $\gamma(X_z) \subset K^*$ in  $\Pv$,
  where 
  $\dim(\gamma(X_z)) = N-r-2$ and $K^* \simeq \PP^{N-r-1}$.

  \begin{claim}
    $\gamma(X_z) \subset K^*$ is not an $(N-r-2)$-plane of $\Pv$
  \end{claim}
  \begin{proof}
    Otherwise, we have $\gamma(X_z) = L^*$
    for some $(r+1)$-plane $L \subset \PN$ containing $K$.
    Since
    $Y := \Cone_{K}(Z_2) \subset \PN$ is a hypersurface
    such that
    $Y^* = \gamma(X_z) = L^*$,
    it follows that $Y = Y^{**} = L$ is indeed a hyperplane.
    Then $\gamma(X_z)$ is a point, yielding $r=N-2$.
    Since $X = \Join(Z_1, Z_2) = \Sec(\Sing(X))$ by assumption,
    this contradicts \autoref{thm:singX-N-2}.
  \end{proof}

  Take a general point $z' \in  Z_1$. By assumption,
  $Z_1 \not\subset \lin{z'K}$.
  Hence we have
  $\lin{z'K} \neq \lin{z''K}$
  for general $z', z'' \in  Z_1$.
  Let $\mathscr{L}\spcirc$ be the set of
  $\lin{z'K} \in  \Gr(r+1, \PN)$ with general $z' \in  Z_1$.
  Then $\dim \mathscr{L}\spcirc \geq 1$.
  
  For a curve $\mathscr{L}_1\spcirc \subset \mathscr{L}\spcirc$,
  and for the closure
  $\mathscr{L}_1 \subset \Gr(r+1, \PN)$ of $\mathscr{L}_1\spcirc$,
  since $K^* = \bigcup_{L \in  \mathscr{L}_1} L^*$ in  $\Pv$,
  and since $\gamma(X_z) \subset K^*$
  is not contained in  the finite union
  $\bigcup_{L \in  \mathscr{L}_1 \setminus \mathscr{L}_1\spcirc} L^*$,
  a general point $\gamma(x) \in  \gamma(X_z)$
  is contained in  $L^*$ for $L = \lin{z'K} \in  \mathscr{L}_1$
  with some general $z' \in  Z_1$, that is,
  $z' \in  \TT_xX$.
\end{proof}

\begin{proof}[Proof of \autoref{thm:char-Xjoin}]
  First we show
  $\dim (\lin{Z_i}) = \dim(Z_i) + 1$ and show that
  $Z_i$ is a quadric hypersurface
  of $\lin{Z_i} = \PP^{\dim(Z_i)+1}$ with $i=1,2$.
  It is sufficient to show the statement for $i=1$.
  Since $Z_1$ is non-linear,
  we have $\dim (\lin{Z_1}) \geq \dim Z_1 + 1$.

  Suppose that $\dim (\lin{Z_1}) > \dim Z_1 + 1$,
  and let $z \in  Z_1$ be general.
  From \autoref{thm:join-sweepZ},
  $Z_1$ is swept out by $Z_1 \cap \TT_xX$'s with general $x \in  X_z$.
  Take a general $w \in  Z_1 \cap \TT_xX$.
  Then $w$ is general in  $Z_1$;
  in  particular, $w \notin Z_2$.
  From \autoref{thm:M-in-Xz}, we have
  $M = \lin{wxz} \subset X_z = \Cone_z(Z_2)$.
  Then we have a curve $A \subset Z_2$
  such that $M = \Cone_z(A)$.

  Let $l_1, l_2 \in  M = \PP^2$ be two general lines passing through $z$.
  Take a point $\alpha_1 \in  l_1 \cap A$.
  Since the point $\alpha_1$ runs over $A$ when $l_1$ sweeps $M$, 
  the line $\lin{\alpha_1 w}$ also sweeps $M$.
  In particular, we may take $x_2 \in  \lin{\alpha_1 w} \cap l_2$ which is general in  $l_2$.
  Also take a general point $x_1 \in  l_1$, and a point $\alpha_2 \in  l_2 \cap A$.
  In this setting, Terracini's lemma implies
  $\TT_{x_i}X = \lin{\TT_{z}Z_1, \TT_{\alpha_i}Z_2}$.

  Since $w \notin A$,
  we also have $M = \Cone_w(A)$,
  which is contained in  $\Cone_{w}(Z_2)$.
  Then
  $\lin{\TT_{\alpha_1}Z_2, w}
  = \TT_{x_2} \Cone_{w}(Z_2) \subset \TT_{x_2}X$.
  Thus $\TT_{\alpha_1}Z_2 \subset \TT_{x_2}X$, and then
  \[
    \TT_{x_2}X = \lin{\TT_{z}Z_1, \TT_{\alpha_1}Z_2} = \TT_{x_1}X.
  \]
  It follows that
  $\lin{zx_1} \cup \lin{zx_2} \subset \fib{x_1}$,
  which contradicts that a general fiber of $\gamma$ is a line (in  particular, is irreducible).

  Hence $\dim (\lin{Z_i}) = \dim(Z_i) + 1$ for $i=1,2$.
  Since $X$ is a cubic hypersurface,
  we have $L := \lin{Z_1} = \Sec(Z_1) \subset X$.
  Note that $L \not\subset \Sing(X)$ (otherwise, $X = \Cone_{L}(Z_2)$, a contradiction).
  Let $y \in  L \setminus \Sing(X)$.
  
  For general $x \in  X \setminus \TT_yX$, taking
  $K := \lin{L,x} \subset \PN$, we consider
  $X \cap K = L \cup X'$,
  where $X'$ is a quadric hypersurface of $K$ because of $L \not\subset \Sing(X \cap K)$.
  Since
  \[
    Z_1 \subset \Sing(X \cap K) = (L \cap X') \cup \Sing(X') \subset X',
  \]
  we have
  $Z_1 \subset L \cap X'$.
  Thus $Z_1$ is a quadric hypersurface of $L$. 
  Then $Z_1$ is smooth
  (otherwise, since $Z_1$ is a cone, so is the join $X$, a contradiction).

  Now we study the intersection of quadrics $Z_1$ and $Z_2$.
  Since $\delta_X=1$, it follows $\dim Z_1 + \dim Z_2 + 1 = N-1$.
  Since $\lin{Z_1, Z_2} = \PN$,
  two linear subvarieties $\lin{Z_1}, \lin{Z_2} \subset \PN$
  intersect only at a point $z_0 \in  \PN$.
  If $z_0 \notin Z_1$, then $Z_1 \cap \lin{Z_2} = \emptyset$
  implies $X = \Join(Z_1, Z_2) = \bigcup_{z \in  Z_1} \Cone_z(Z_2)$,
  where we need not take the closure of the union;
  in  particular,
  since $\lin{Z_2} \cap \Cone_z(Z_2) = Z_2$ for all $z \in  Z_1$, it follows
  $\lin{Z_2} \not\subset X$, a contradiction.
  Hence $z_0 \in  Z_1$. In the same way, $z_0 \in  Z_2$.
  Therefore the assertion follows.
\end{proof}

\begin{ex}\label{thm:calc-Join}
  Let $X = \Join(Q_1, Q_2)$ as in  (II) of \autoref{mainthm}.
  Then $X$ is indeed a cubic hypersurface as follows.

  Set $p = \dim Q_1$ and $q = \dim Q_2$.
  Then $N = p+q+2$.
  Choosing the homogeneous coordinates
  $\pco{x_0: x_1: \dots: x_p: x_{p+1}: \dots: x_{p+q}: y_1: y_2}$
  on $\PP^{p+q+2}$,
  we may assume
  \begin{gather*}
    Q_1 = (-x_0y_1+x_1^2+\dots+x_p^2 = 0) \ \text{ in  }\ 
    \PP^{p+1} = (x_{p+1} = \dots = x_{p+q} = y_2 = 0),
    \\
    Q_2 = (-x_0y_2+x_{p+1}^2+\dots+x_{p+q}^2 = 0) \ \text{ in  }\ 
    \PP^{q+1} = (x_{1} = \dots = x_{p} = y_1 = 0),
  \end{gather*}
  where $Q_1 \cap Q_2 = \PP^{p+1} \cap \PP^{q+1} = \Set{(1,0,\dots,0)}$.
  Then $X \subset \PP^{p+q+2}$
  is defined by a cubic homogeneous polynomial
  \[
    -x_0y_1y_2 + y_1(x_{p+1}^2+\dots + x_{p+q}^2) + y_2(x_1^2+\dots+x_p^2).
  \]
  This is because, the join $X$ is equal to the closure of
  the image of the morphism
  $\A^p \times \A^q \times \A^1 \rightarrow \PP^{p+q+2}$,
  which is given by
  \begin{multline*}
    ((a_1, \dots, a_p), (b_1, \dots, b_q), c) \\ \mapsto 
    \pco{1+c: a_1: \dots: a_p: cb_1: \dots: cb_q: a_1^2+\dots+a_p^2: c(b_1^2+\dots+b_q^2)}.
  \end{multline*}
  Note that $\Sing(X) \subset (y_1 = 0) \cup (y_2 = 0)$.
\end{ex}

\begin{rem}\label{rem:join-two-varieties}
  In the above example,
  the assumption of \autoref{thm:M-in-Xz} does not satisfied.
  Consider $Z_1 = Q_1$.
  Then for general $z \in  Q_1$,
  $X_z = \Cone_{z}(Q_2)$.
  For general $x \in  X_z$, we have
  $\lin{Q_1} \cap \TT_{x}X = \TT_{z}Q_1$,
  and hence
  $Q_1 \cap \TT_{x}X = Q_1 \cap \TT_{z}Q_1$.
  Thus $Q_1$ cannot be swept out by $Q_1 \cap \TT_{x}X$'s with general $x \in  X_z$.

  Note that $Q_1 \subset \TT_xX$ for a special point $x$ of $X_z$; indeed,
  for $s := Q_1 \cap Q_2$ and $x \in \lin{sz} \subset X_z$,
  we have $\lin{Q_1} \subset \TT_xX$.
\end{rem}

\begin{rem}\label{thm:Y-sweptout-gen-members}
  Let $Y \subset \PN$ be a closed variety,
  let $A \subset \Pv$ be a curve, and let $A\spcirc \subset A$ be a non-empty open subset.
  Assume that $Y \not\subset H$ for any $H \in A$.
  Then $Y \subset \PN$ is swept out by $Y \cap H$ with $H \in A\spcirc$.
  This is because, $\bigcup_{H \in  A} H = \PN$,
  and $Y$ is not contained in a union of finitely many members of $A$.
\end{rem}

\begin{rem}\label{thm:del1-I-II}
  Let $X$ be a cubic hypersurface with $\delta_X = 1$, which is not a cone.
  Assume $X = \Sec(\Sing(X))$,
  and let $S_1, \dots, S_{r}$ be the irreducible components of $\Sing(X)$.
  For $x,y \in \Sing(X)$, since $x \in S_i$ and $y \in S_j$ with some $1 \leq i,j \leq r$,
  we have $X = \bigcup_{1 \leq i \neq j \leq r} \Join(S_i, S_j) \cup \bigcup_{1 \leq i \leq r} \Sec(S_i)$, where the joins and secants are irreducible varieties.
  Then (I) of \autoref{mainthm} means that $X = \Sec(S_i)$ for some $i$.
  If $X = \Join(S_i, S_j)$ for some $i \neq j$,
  then \autoref{thm:char-Xjoin} implies the condition~(II) of \autoref{mainthm}.
\end{rem}

\section{Locus of intersection points of \\ the singular locus and general contact loci}
\label{sec:secant-sing-locus}

In this section, we study the case when $\delta_X=1$ and $\Sec(\Sing(X)) \neq X$.
Under the condition, it follows $\kappa = 1$ as in \autoref{thm:def-Z} and \autoref{thm:xi-2-case}.
We write $Z = Z_1$.
Our goal is to show the following result.

\begin{thm}\label{thm:large-lin-Z}
  Let $X \subset \PN$ be a cubic hypersurface with $\delta_X = 1$ which is not a cone.
  Assume that $\Sec(\Sing(X)) \neq X$, and take $Z = Z_1 \subset \Sing(X)$.
  Then $\lin{Z} \subset X$.
\end{thm}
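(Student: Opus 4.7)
The plan is to argue by contradiction: assume $\langle Z\rangle\not\subset X$ and derive a contradiction by combining the Gauss-map machinery of \autoref{sec:sing-points-gener} with structural inputs developed in the next two subsections. The hypothesis $\Sec(\Sing(X))\neq X$ together with \autoref{thm:xi-2-case} give $\kappa=1$, so $Z=Z_1$ is well-defined and $\Sec(Z)\subsetneq X$. Because $X$ is not a cone, $\dim Z\geq 1$. If $Z$ were a linear variety, then $\langle Z\rangle=Z\subset\Sing(X)\subset X$ would contradict the standing assumption, so $Z$ is non-linear. These preliminary observations set up the hypotheses needed to invoke \autoref{thm:M-in-Xz}.

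The first substantive step is to verify that for general $z\in Z$, the variety $Z$ is swept out by the intersections $Z\cap\TT_xX$ as $x$ ranges over $X_z$, which is exactly the sweep hypothesis of \autoref{thm:M-in-Xz}. This is the analogue of \autoref{thm:join-sweepZ}, and I would prove it along the same lines: show that $\gamma(X_z)\subset(\TT_zZ)^*$ is not all of $(\TT_zZ)^*$, since otherwise a Gauss-image/double-dual computation forces either $X$ to be a cone or $\dim Z=N-2$, the latter contradicting $\Sec(Z)\neq X$ via \autoref{thm:singX-N-2}. Then applying \autoref{thm:Y-sweptout-gen-members} to a general curve in $\gamma(X_z)$ yields the sweep. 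Once this is in place, \autoref{thm:M-in-Xz} supplies a whole family of $2$-planes $M=\langle Fw\rangle\subset X_z\subset X$ indexed by general fibers $F=\fib{x}$ with $z\in F\cap Z$ and general $w\in Z\cap\TT_xX$.

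The core of the proof relies on two inputs to be developed in the next subsections. In \autoref{sec:gauss-map-cone} one shows that, under the contradictory assumption $\langle Z\rangle\not\subset X$, for general $w\in Z$ the cone $\Cone_w(Z)$ is not contained in $\Sing(X)$; consequently $\gamma$ is defined on a dense open subset of $\Cone_w(Z)$ and $\gamma(\Cone_w(Z))\subset X^*\cap w^*$ is a positive-dimensional subvariety. In \autoref{sec:cones-codim-one} one then analyzes the codimension-one cone $C_w\subset X$ from \autoref{thm:Cw} by slicing with the tangent hyperplanes $T\in\gamma(\Cone_w(Z))$: each such $T$ is tangent to $X$ along a line fiber of $\gamma$ meeting $\Cone_w(Z)$, and so each slice $C_w\cap T$ carries strong linear constraints. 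Patching these slices together with the $2$-planes $M$ above generates enough of $\langle Z\rangle$ inside $X$ to force $\langle Z\rangle\subset X$, giving the desired contradiction.

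The principal obstacle is the tangent-hyperplane slice analysis of $C_w$ in \autoref{sec:cones-codim-one}. Since the slicing family $\gamma(\Cone_w(Z))$ is only known to be positive-dimensional (with no a priori control on its codimension inside $X^*\cap w^*$), one cannot close the argument by a naive dimension count; instead one must exploit the rigid cone structure of $C_w$ and the line-fiber structure of $\gamma$ along it, and then patch data from different slices $T$ into a coherent description of $\langle Z\rangle$ set-theoretically inside $X$. This slicing-and-patching is precisely the content of \autoref{sec:cones-codim-one} and is the central geometric input of the proof.
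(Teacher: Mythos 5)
Your setup is faithful to the paper's: the reduction to $\kappa=1$, the contradiction hypothesis $\lin{Z}\not\subset X$, the sweep lemma for $Z$ by $Z\cap\TT_xX$ with $x\in X_z$ general (the paper's \autoref{thm:Z-swout-T-Xz}, proved there by bounding $\dim P^*$ for $P=\lin{Z}$ rather than by the double-dual computation of \autoref{thm:join-sweepZ}, but your variant is plausible), the statement that $\Cone_w(Z)\not\subset\Sing(X)$ (\autoref{thm:dim-gamma-ConezZ}), and the idea of slicing $C_w$ by tangent hyperplanes $T\in\gamma(\Cone_w(Z))$. But the proof is not closed: the step you describe as ``patching these slices together \dots{} generates enough of $\lin{Z}$ inside $X$ to force $\lin{Z}\subset X$'' is precisely the part you defer to \autoref{sec:cones-codim-one} without supplying an argument, and it is moreover not the mechanism the paper uses. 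Nothing in your outline explains how the slices $C_w\cap T$ would produce points of $\lin{Z}\setminus\Sec(Z)$ lying on $X$, and it is not clear such a patching could work.

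The paper's actual endgame is different and is the essential content of the proof. One takes \emph{two} general points $w,w'\in Z$ and the single tangent hyperplane $T=\TT_yX$ at a general point $y$ of the secant line $\lin{ww'}$ (so $\lin{ww'}\subset T$ while $Z\not\subset T$ by \autoref{thm:dim-gamma-cone-zZ}). One first checks $\gamma(C_w\cap T)=\gamma(C_w)$, because a general fiber $F\subset C_w$ meets $\Sing(X)$ only at a general $z\in Z$ with $z\notin T$, so $F\cap T$ is a smooth point with the same Gauss image. Then, combining \autoref{thm:Z-sw-out-ZnT} with \autoref{thm:M-in-Xz}, for general $t\in C_w\cap T$ one produces $t_0\in\lin{tw}$ with $w'\in\TT_{t_0}X$; the plane $\lin{t_0ww'}$ must lie in $X$ by \autoref{thm:3rd-line} (it already carries too many singular points of the plane cubic otherwise), whence $w'\in\TT_tX$. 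This gives $\gamma(C_w)=\gamma(C_w\cap T)\subset {w'}^*$, and since $w\in Z$ is general and the $C_w$ sweep $X$, one gets $X^*\subset{w'}^*$, i.e.\ $X$ is a cone with vertex $w'$ --- the contradiction. This ``fixed $w'$, all tangent hyperplanes contain $w'$'' argument is the missing idea; without it (or a worked-out substitute), your proposal is an outline of the preliminaries rather than a proof.
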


Recall that, for the closure $F \subset X$
of a general fiber of $\gamma$,
it holds $F \cap \Sing(X) = F \cap Z = \set{z}$ with some $z \in  Z$.
In this case,
we have the following statement.

\begin{lem}\label{thm:Xz-in-TTyX-for-y-sz}
  Let $X \subset \PN$ be a cubic hypersurface,
  and let $F \subset X$ be a line contracted to a point under $\gamma$
  such that $F \cap \Sing(X) = \set{z}$.
  Let $s \in  \Sing(X)$ be a point such that $\lin{sz} \not\subset \Sing(X)$.
  Then $F \subset \TT_yX$ for $y \in  \lin{sz} \setminus \Sing(X)$.
\end{lem}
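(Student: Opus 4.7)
The plan is to analyze the $2$-plane $M := \lin{Fs}$, which contains $y$ since $y \in \lin{sz} \subset M$, and to invoke \autoref{thm:3rd-line} with $s_1 = z$, $s_2 = s$, $l_1 = F$. Its hypotheses are met: $s \neq z$ (as $\lin{sz}$ is a well-defined line) and $s \notin F$ (since $F \cap \Sing(X) = \set{z}$). If $M \subset X$, then at the smooth point $y \in M$ of $X$ one has $M \subset \TT_y X$, hence $F \subset \TT_y X$. Otherwise, \autoref{thm:3rd-line} leaves two subcases.

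In subcase~(i), $X \cap M$ equals $2\lin{zs} + F$ as divisors on $M$. Here $y$ lies in $\lin{zs} \setminus F$ (as $F$ and $\lin{zs}$ meet only at $z \neq y$) and is smooth on $X$, while the plane cubic $X \cap M$ has multiplicity two at $y$; transversality of $M$ and $X$ at $y$ is therefore impossible, forcing $M \subset \TT_y X$ and hence $F \subset \TT_y X$.

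The main obstacle is ruling out subcase~(ii), where $X \cap M = \lin{zs} \cup F \cup l_2$ with $s \in l_2$ and $z \notin l_2$. I plan to exploit the Gauss-map fiber structure of $F$. Set $p_0 := F \cap l_2$; since $z \notin l_2$, we have $p_0 \neq z$, and so $p_0$ is smooth on $X$. The curve $X \cap M$ has multiplicity two at $p_0$ (two distinct components $F$ and $l_2$ pass through it), so the same multiplicity argument as in subcase~(i) yields $M \subset \TT_{p_0} X$. Because $F$ is contracted by $\gamma$, the hyperplane $T := \TT_{p_0} X$ equals $\TT_q X$ for every smooth $q \in F$; in particular $s \in M \subset T = \TT_q X$ for each such $q$. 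By the standard cubic length argument (a line through a singular point of $X$ and a smooth point whose tangent hyperplane contains the singular point must lie in $X$), this gives $\lin{qs} \subset X$ for every smooth $q \in F$. But choosing smooth $q \in F$ with $q \notin \set{z, p_0}$, the line $\lin{qs}$ is distinct from each of $\lin{zs}$, $F$, and $l_2$, contradicting the decomposition of $X \cap M$ into these three lines. Hence subcase~(ii) is impossible and the lemma follows.
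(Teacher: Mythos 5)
Your proof is correct and follows essentially the same route as the paper's: pass to the plane $M = \lin{Fs}$, apply \autoref{thm:3rd-line}, settle case~(i) by the singularity of the plane cubic $X \cap M$ at the smooth point $y$ of $X$, and rule out case~(ii) by using that $F$ is contracted so that $M \subset \TT_{p_0}X = \TT_qX$ for every smooth $q \in F$. The only (immaterial) difference is the final contradiction in case~(ii): the paper concludes $F \subset \Sing(X \cap M)$, contradicting the finiteness of that singular locus, whereas you produce a fourth line $\lin{qs}$ inside the three-line configuration.
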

\begin{proof}
  Set $M = \lin{sF} \subset \PN$. If $M \subset X$,
  we have $F \subset M \subset \TT_yX$ immediately.
  Assume $M \not\subset X$.
  Then $\lin{sz} \cup F \subset X \cap M$.

  We have $\Sing(X \cap M) = \lin{sz}$ and $\lin{sz} \cup F = X \cap M$.
  Otherwise,
  as in  \autoref{thm:3rd-line},
  it holds that $X \cap M = \lin{sz} \cup F \cup l$
  and $\Sing(X \cap M) = \Set{w, s, F \cap l}$,
  where $l \subset M$ is a line passing through $s$ such that $l \neq \lin{sz}$.
  Let $F \cap l = \set{x_0}$, where $x_0 \notin \Sing(X)$ by assumption.
  Since $F$ is contracted to a point under $\gamma$, we have
  $M \subset \TT_{x_0}X = \TT_{x}X$ for general $x \in  F$,
  that is, $F \subset \Sing(X \cap M)$, a contradiction.
  Therefore $\Sing(X \cap M) = \lin{sz}$, yielding $F \subset M \subset \TT_yX$.
\end{proof}

\subsection{Secant line of the singular locus}
\label{sec:gauss-map-cone}

In order to show \autoref{thm:large-lin-Z},
we argue by contradiction
and suppose $\lin{Z} \not\subset X$.
In this subsection, we show the following proposition.

\begin{prop}\label{thm:dim-gamma-ConezZ}
  Let $X \subset \PN$ be as the assumption of \autoref{thm:large-lin-Z},
  and assume $\lin{Z} \not\subset X$.
  Let $w \in  Z$ be a general point.
  Then
  $\Cone_w(Z) \subset X$ is not contained in  $\Sing(X \cap \lin{Z})$,
  hence, is not contained in  $\Sing(X)$.

\end{prop}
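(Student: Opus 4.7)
I argue by contradiction: suppose $\Cone_w(Z) \subset \Sing(X \cap \lin{Z})$ for general $w \in Z$. Taking the closure of the union over $w$ gives $\Sec(Z) \subset \Sing(X \cap \lin{Z})$. Since $\lin{Z} \not\subset X$, the Jacobian criterion identifies $y \in \Sing(X \cap \lin{Z})$ with the condition that $y \in \Sing(X)$, or $y$ is smooth on $X$ with $\lin{Z} \subset \TT_{y}X$. Also, $\lin{Z} \not\subset X$ combined with $Z \subset X$ forces $Z$ to be non-linear, so $\Sec(Z) \supsetneq Z$. Fix a general smooth $x \in X$, let $F = \fib{x}$ (a line), and set $z = F \cap Z$, a single point by $\kappa = 1$ (\autoref{thm:xi-2-case}). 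For general $w \in Z \setminus \set{z}$, the secant line $\lin{wz}$ lies in $\Sec(Z) \subset \Sing(X \cap \lin{Z})$, and I split on whether generically $\lin{wz} \subset \Sing(X)$.

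\emph{Case 1:} for general $w$, $\lin{wz} \not\subset \Sing(X)$. Then a general smooth $y \in \lin{wz}$ satisfies $\lin{Z} \subset \TT_{y}X$, and \autoref{thm:Xz-in-TTyX-for-y-sz} applied with $s = w$ further yields $F \subset \TT_{y}X$, so $\lin{F, \lin{Z}} \subset \TT_{y}X$. Here $F \not\subset \lin{Z}$ for general $x$ (else Gauss fibers, which sweep $X$, would force $X \subset \lin{Z}$, contradicting $\lin{Z} \not\subset X$), so $\lin{F, \lin{Z}}$ strictly extends $\lin{Z}$. Letting $w$ range over (an open of) $Z$ with $z, F$ fixed, the points $y$ sweep a dense open of the cone $\Cone_{z}(Z)$, which has dimension $\dim Z + 1$; hence $\gamma$ sends this open into $\lin{F, \lin{Z}}^* \subset \Pv$, a linear subspace of dimension $N - 2 - \dim \lin{Z}$. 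Combined with $\delta_X = 1$, this gives the inequality $\dim Z + \dim \lin{Z} \leq N - 2$. Further varying $x$ (so $F$ and $z$ move) and exploiting that $X$ is swept by the $(N-2)$-dimensional cones $C_w$ of \autoref{thm:Cw} with $\Cone_w(Z) \subset C_w$, the accumulated tangent-containment constraints force too much redundancy of $\gamma$, ultimately contradicting $\dim X^* = N - 2$.

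\emph{Case 2:} for general $w$, $\lin{wz} \subset \Sing(X)$. Then $\Sec(Z) \subset \Sing(X)$; since $Z$ is non-linear, we may pick $s \in \Sec(Z) \setminus Z$. For any general $x'$, the fiber $F_{x'}$ meets $\Sing(X)$ only at $z_{x'} \in Z$, so $s \notin F_{x'}$. Applying \autoref{thm:Xz-in-TTyX-for-y-sz} with $s$ in place of the lemma's $s$ along lines $\lin{s, z_{x'}}$ (when not entirely in $\Sing(X)$) forces $F_{x'} \subset \TT_{y}X$ for $y$ on such lines; the resulting large family of tangent hyperplanes through $s$, combined with the $C_w$-sweeping of \autoref{thm:Cw}, forces $X$ to be a cone with vertex $s \in \Sing(X)$, contradicting the hypothesis. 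The main obstacle is converting the derived constraints into a sharp contradiction --- particularly in Case 1, where the inequality $\dim Z + \dim \lin{Z} \leq N - 2$ must be amplified via joint variation of $x$ and $w$ together with the $C_w$-sweep structure to rule out the residual geometric possibilities.
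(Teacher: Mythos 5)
There is a genuine gap: both branches of your case analysis stop short of an actual contradiction, and you acknowledge as much. In Case 1, the conclusion you reach is that $\gamma$ maps a dense open subset of $\Cone_z(Z)$ into the linear space $\lin{F,\lin{Z}}^*$; even granting the resulting inequality $\dim Z+\dim\lin{Z}\leq N-2$ (whose derivation itself needs control of the fibres of $\gamma$ restricted to $\Cone_z(Z)$ --- these need not be $1$-dimensional, since the points of the cone are not general in $X$), this inequality is not by itself incompatible with any hypothesis, and the promised ``amplification via joint variation of $x$ and $w$'' is exactly the missing argument. The parenthetical claim that $F\subset\lin{Z}$ would contradict $\lin{Z}\not\subset X$ is also wrong as stated: $X\subset\lin{Z}$ only forces $\lin{Z}=\PN$, which is consistent with $\lin{Z}\not\subset X$. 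In Case 2, knowing $F_{x'}\subset\TT_yX$ for $y$ on lines $\lin{s z_{x'}}$ constrains tangent spaces at the points $y$, not at general points of $X$; it does not yield $s\in\TT_{x}X$ for general $x\in X$, so ``forces $X$ to be a cone with vertex $s$'' is an unproved assertion, not a consequence of what precedes it.

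For comparison, the paper's proof avoids both difficulties by working entirely inside $P=\lin{Z}$ with the cubic $X^P=X\cap P$, which is an \emph{irreducible} hypersurface of $P$ by \autoref{thm:XP'}, and with the cones $C_w^P$ (components of $C_w\cap P$ containing $Z$, available because $Z\subset C_w$ by \autoref{thm:Z-in-Cw}), which sweep out $X^P$. Under the contradiction hypothesis $\lin{ww'}\subset\Sing(X^P)$ for general $w,w'\in Z$, one shows that the $2$-plane $M=\lin{uww'}$ lies in $X^P$ for general $u\in C_w^P$ --- via \autoref{thm:3rd-line} when $\lin{uw}$ meets $\Sing(X^P)$ beyond $w$, and via a separate claim that $\gamma_{X^P}$ does not contract $\lin{uw}$ otherwise. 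Then $w'\in M\subset\TT_uX^P$ for general $w'$, hence $P=\lin{Z}\subset\TT_uX^P$, which is absurd because $X^P$ is a hypersurface of $P$ smooth at $u$. This terminal contradiction, localized at a single smooth point of $X^P$, is the sharp step your plan is missing.
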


\begin{lem}\label{thm:XP'}
  For a linear variety $P' \subset \PN$ containing $Z$,
  the intersection
  $X \cap P'$ is an irreducible cubic hypersurface of $P'$.
\end{lem}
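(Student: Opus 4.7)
The plan is to argue by contradiction: assume the cubic form cutting out $X \cap P'$ inside $P'$ factors nontrivially, and in every possible factorization type derive $\lin{Z} \subset X$, contradicting the standing hypothesis $\lin{Z} \not\subset X$ of the subsection.

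First I would confirm that $X \cap P'$ is genuinely a cubic hypersurface of $P'$. Since $Z \subset P'$ and $P'$ is linear we have $\lin{Z} \subset P'$; combined with $\lin{Z} \not\subset X$ this gives $P' \not\subset X$, so the cubic form defining $X$ restricts to a nonzero cubic form $f$ on $P'$ with $X \cap P' = \{f = 0\}$. By the chain rule, a singular point of $X$ lying in $P'$ is also a singular point of $\{f = 0\}$, hence $Z \subset \Sing(X) \cap P' \subset \Sing(X \cap P')$.

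Next I would suppose $f$ factors. Over $\CC$ the possibilities are $f = L\cdot Q$ with $Q$ an irreducible quadratic form on $P'$, or $f = L_1 L_2 L_3$ with possibly repeated linear factors; in either case $X \cap P'$ contains at least one hyperplane component of $P'$. Using the identity $\Sing(\{gh = 0\}) = \{g = h = 0\} \cup \Sing(\{g=0\}) \cup \Sing(\{h=0\})$, a direct case check shows that each irreducible component $W$ of $\Sing(X \cap P')$ is either a linear subvariety already contained in some hyperplane component of $X \cap P'$, or is the vertex of an irreducible quadric component $Q \subset X \cap P'$ (also linear), or is an intersection $L \cap \{Q=0\}$ whose span lies in $L$; in every alternative $\lin{W} \subset X$.

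Finally, since $Z$ is irreducible and contained in $\Sing(X \cap P')$, it lies in a single such component $W$, giving $\lin{Z} \subset \lin{W} \subset X$, contradicting $\lin{Z} \not\subset X$. The step requiring the most care is the case analysis above; once one notes that the singular locus of any reducible (possibly non-reduced) cubic form on a projective space is a union of linear subvarieties and vertices of irreducible quadric components, all contained in the cubic itself, irreducibility of $Z$ finishes the argument immediately.
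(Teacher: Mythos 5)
Your proof is correct and follows essentially the same route as the paper's: both rule out every possible factorization of the cubic form cutting out $X \cap P'$ by showing that the irreducible variety $Z$, sitting inside $X\cap P'$ and its singular locus, would then lie in a component whose linear span is contained in $X$, contradicting the standing hypothesis $\lin{Z} \not\subset X$. Your explicit check that $P' \not\subset X$ is a minor point the paper leaves implicit.
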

\begin{proof}
  We have $Z \subset X \cap P'$.
  If $X \cap P'$ is set-theoretically equal to union of hyperplanes $L_i$ of $P'$,
  then $Z \subset L_i$ and then $\lin{Z} \subset L_i \subset X$, a contradiction.
  Suppose $X \cap P' = L \cup Q$ for a hyperplane $L$ and a quadric hypersurface $Q$ of $P'$.
  Then $Z \subset \Sing(X \cap P') = (L \cap Q) \cup \Sing(Q)$.
  Since $Z \not\subset L$, we have $Z \subset \Sing(Q)$.
  Since $Q$ is a quadric,
  the locus $\Sing(Q)$ is the vertex of the cone,
  in  particular, is a linear variety.
  Then $\lin{Z} \subset \Sing(Q) \subset X$, a contradiction.
  Hence $X \cap P'$ is an irreducible cubic.
\end{proof}

\begin{lem}\label{thm:Z-swout-T-Xz}
  Let $z \in  Z$ be a general point.
  Then $Z$ is swept out by $Z \cap \TT_xX$'s with general $x \in  X_z$,
  where $X_z \subset X$ is a cone consisting of general fibers of $\gamma$ passing through $z$
  as in \autoref{thm:def-Z}.
\end{lem}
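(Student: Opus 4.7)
The plan is to combine the standard fact that every hyperplane meets a positive-dimensional projective variety with an elementary dimension count on $\gamma(X_z)$. First, by the reduction carried out in \autoref{sec:sing-locus-codim} we may assume $\dim \Sing(X) \leq N-3$, so $\dim Z \leq N-3$. Using the incidence $\mathscr{Z}_1$ from \autoref{thm:def-Z}, which is irreducible of dimension $N-1$ and has $\pr_1$ injective on each fibre of $\pr_2$, we obtain $\dim X_z = N-1-\dim Z$ for general $z \in Z$. Since a general $x \in X_z$ is a smooth point of $X$ with $\fib{x}$ a line through $z$ contained in $X_z$, the Gauss map contracts these fibres and
\[
  \dim \gamma(X_z) = \dim X_z - 1 = N-2-\dim Z \geq 1.
\]

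Given any $w \in Z$, the set $w^* \subset \Pv$ of hyperplanes through $w$ is itself a hyperplane of $\Pv$, hence meets the positive-dimensional projective variety $\gamma(X_z)$. Any $T \in w^* \cap \gamma(X_z)$ equals $\TT_xX$ for some $x \in X_z$ and contains $w$, which yields $w \in Z \cap \TT_xX$. This establishes the set-theoretic equality
\[
  Z = \bigcup_{x \in X_z} (Z \cap \TT_xX).
\]

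To upgrade this to the ``general $x \in X_z$'' statement, I would pass to the main irreducible component of the incidence $\set*{(x,w) \in X_z \times Z}{w \in \TT_xX}$, defined as the closure of the set of pairs with $x$ in a dense open of $X_z$ and $w \in Z \cap \TT_xX$. Using upper semicontinuity of fibre dimension together with the generic behaviour $\dim(Z \cap \TT_xX) = \dim Z -1$ from the incidence of \autoref{thm:Cw}, this component has dimension $(N-1-\dim Z)+(\dim Z-1) = N-2$ and projects dominantly onto $X_z$ by construction. The same dimension count applied to the auxiliary incidence $\set*{(T,w) \in \gamma(X_z) \times Z}{w \in T}$ (whose first projection is surjective with generic fibre of dimension $\dim Z-1$) shows that its image in $Z$ has dimension $\dim Z$, hence equals $Z$; by irreducibility, the second projection of our main component is therefore also dominant, and restricting the first projection to any dense open of $X_z$ preserves density of the image in $Z$.

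The essential geometric input is the strict inequality $\dim \gamma(X_z) \geq 1$, which relies on the standing reduction $\dim Z \leq N-3$; if one instead had $\dim Z = N-2$ then $\gamma(X_z)$ would collapse to a point and the hyperplane argument would fail, but that case is excluded by \autoref{thm:singX-N-2}. The refinement from ``some $x$'' to ``general $x$'' is then purely formal bookkeeping on the incidence variety and poses no real obstacle.
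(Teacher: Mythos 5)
Your setup matches the paper's: the reduction to $\dim Z \leq N-3$, the computation $\dim \gamma(X_z) = \dim X_z - 1 = N-2-\dim Z \geq 1$, and the observation that $w^*$ meets the positive-dimensional variety $\overline{\gamma(X_z)}$ for every $w \in Z$ are all correct, and the last point does give the weak statement that every $w \in Z$ lies on $\TT_xX$ for \emph{some} $x \in X_z$ (or on a limit of such tangent hyperplanes --- note $w^*$ a priori only meets the closure of the image). The gap is the passage to \emph{general} $x \in X_z$, which you describe as ``purely formal bookkeeping''; it is not, and your dimension count does not close it. The count $\dim\{(T,w) : T \in \gamma(X_z),\ w \in T\} = \dim\gamma(X_z) + \dim Z - 1 = N-3$ is equally consistent with the scenario in which $Z \cap \TT_xX$ is a \emph{fixed} divisor $Z_0 \subsetneq Z$ for all $x$ in a dense open subset of $X_z$, the remaining points of $Z$ being reached only by tangent hyperplanes at special points: the fibres of $\pr_2$ over $Z_0$ then have dimension $\dim\gamma(X_z)$ and those over $Z \setminus Z_0$ have dimension $\dim\gamma(X_z)-1$ but lie entirely over the boundary, and both components have total dimension $N-3$. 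Since the incidence variety need not be irreducible, surjectivity of $\pr_2$ on the whole incidence does not transfer to the component dominating $X_z$. This failure mode actually occurs: in \autoref{rem:join-two-varieties}, with $X = \Join(Q_1,Q_2)$ and $Z_1 = Q_1$, one has $\dim Q_1 \leq N-3$ and $\dim\gamma(X_z) \geq 1$, every $w \in Q_1$ lies in $\TT_xX$ for some special $x \in X_z$, and yet $Q_1 \cap \TT_xX = Q_1 \cap \TT_zQ_1$ is constant for general $x \in X_z$. Your argument applied verbatim to that configuration proves a false conclusion (that example has $\Sec(\Sing(X)) = X$, so it does not contradict the lemma, but it does refute your method).

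What is missing is precisely the standing hypothesis of this subsection, $\lin{Z} \not\subset X$ (together with $\Sec(\Sing(X)) \neq X$), which your proof never invokes. The paper uses it, via \autoref{thm:sec-linear}, to show $\dim\lin{Z} \geq \dim Z + 3$, hence $\dim(\lin{Z}^*) \leq \dim\overline{\gamma(X_z)} - 2$. This codimension-two bound is what rules out the bad scenario above: it allows one to choose a curve $A \subset \overline{\gamma(X_z)}$ through a general point and disjoint from $\lin{Z}^*$, so that $Z \not\subset T$ for every $T \in A$, and then \autoref{thm:Y-sweptout-gen-members} yields that $Z$ is swept out by $Z \cap T$ for \emph{general} $T \in A$. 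So the essential geometric input is not the inequality $\dim\gamma(X_z) \geq 1$ (which also holds in the join example) but the lower bound on $\dim\lin{Z}$; without it, your argument cannot distinguish the true statement from the counterexample.
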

\begin{proof}
  Set $r = \dim Z$
  and $P = \lin{Z} \subset \PN$.
  From \autoref{thm:singX-N-2}, we may assume $r \leq N-3$.
  From $P \not\subset X$, it follows that
  \[
    Z \subsetneq \Sec(Z) \subsetneq P;
  \]
  thus $\dim P \geq r+2$.
  Suppose $\dim P = r+2$. Then
  $\dim (\Sec(Z)) = r+1$.
  It follows from \autoref{thm:sec-linear}
  that $\Sec(Z)$ is a linear variety, i.e.,
  $\Sec(Z) = P$, a contradiction.
  Hence $\dim P \geq r+3$.

  On the other hand, $\dim (\gamma(X_z)) = N-r-2 \geq 1$.
  Since $\dim(P^*) \leq N-r-4$,
  the intersection $\overline{\gamma(X_z)} \cap P^* \subset \Pv$
  is of codimension $2$ in  $\overline{\gamma(X_z)}$.
  (Note that $P^* = \emptyset$ if $P = \PN$.)
  We may take a curve $A \subset \overline{\gamma(X_z)}$ not intersecting $P^*$.
  Then, as in \autoref{thm:Y-sweptout-gen-members},
  the locus $Z$ is swept out by $Z \cap H$'s with general $H \in  A$.
  Hence the assertion follows.
\end{proof}

We take the $(N-2)$-dimensional cone $C_w \subset X$ and
$\hat{\mathscr{C}} \subset X\spcirc \times Z$ as in \autoref{thm:Cw},
and consider the following set,
\[
  \mathscr{B} := \set*{(u, w, z) \in  X\spcirc \times Z \times Z}{w \in  \TT_uX,\, \fib{u} \cap \Sing(X) = \set{z}},
\]
with the projection $\rho_i$ from $\mathscr{B}$ to the $i$-th factor,
where $\mathscr{B} \simeq \hat{\mathscr{C}}$.
\begin{cor}\label{thm:Z-in-Cw}
  Let $w \in  Z$ be a general point.
  Then $\rho_3(\rho_2^{-1}(w)) \subset Z$ is a dense subset.
  Hence $Z \subset C_w$ and $\lin{C_w} = \PN$.
\end{cor}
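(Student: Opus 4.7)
The plan is to deduce all three statements — density of $\rho_3(\rho_2^{-1}(w))$, the containment $Z \subset C_w$, and the spanning $\lin{C_w} = \PN$ — by applying \autoref{thm:Z-swout-T-Xz} to the incidence variety $\mathscr{B}$ and then transferring the resulting set-theoretic information through the cone structure of $C_w$.

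For the density statement, I would first unwind the definition: a triple $(u,w,z)$ lies in $\mathscr{B}$ exactly when $u \in X_z\spcirc$ and $w \in Z \cap \TT_uX$. Fixing a general $z \in Z$ and projecting $\rho_3^{-1}(z)$ via $\rho_2$ yields
\[
  \rho_2(\rho_3^{-1}(z)) \;=\; \bigcup_{u \in X_z\spcirc} Z \cap \TT_uX,
\]
which \autoref{thm:Z-swout-T-Xz} identifies as a dense subset of $Z$. This shows that the joint map $\rho_2 \times \rho_3 \colon \mathscr{B} \to Z \times Z$ has dense image. By a standard constructible-set argument (symmetric interchange of the two factors), the generic fiber of the first projection is also dense in $Z$, giving the density of $\rho_3(\rho_2^{-1}(w))$ for general $w \in Z$.

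For the containment $Z \subset C_w$, I would observe that whenever $(u,w,z) \in \mathscr{B}$, the tangent hyperplane $\TT_{u'}X$ is constant along the smooth locus of the line $\fib{u}$; in particular $w \in \TT_{u'}X$ for every smooth $u' \in \fib{u}$, so $\fib{u} \subset C_w\spcirc \cup \Sing(X) \subset C_w$. Thus $z \in \fib{u} \subset C_w$. Combined with the density from the first step, this forces $Z \cap C_w$ to be dense in $Z$, and closedness of $C_w$ then yields $Z \subset C_w$.

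For the spanning $\lin{C_w} = \PN$, the inclusion $Z \subset C_w$ upgrades under the cone property (vertex $w \in Z$) to $\Cone_w(Z) \subset C_w$, so $\lin{Z} \subset \lin{C_w}$. To finish I would argue by contradiction: if $\lin{C_w} = L$ were a proper linear subspace, then the $(N-2)$-dimensional cone $C_w$ sitting inside the cubic $X \cap L$ would force $L$ to be a hyperplane and $C_w$ an irreducible component of $X \cap L$. Exploiting the sweeping of $X$ by $\{C_w\}_{w \in Z}$ from \autoref{thm:Cw} together with the fact that $X$ is irreducible, nondegenerate, and not a cone, I would derive a contradiction by controlling how the algebraic family $w \mapsto \lin{C_w}$ varies. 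This last part is the main obstacle: translating the qualitative inclusion $Z \subset C_w$ into the linear-algebraic nondegeneracy $\lin{C_w} = \PN$ requires a careful analysis of the family of linear spans, since $\lin{Z}$ alone need not coincide with $\PN$ under the standing hypothesis $\lin{Z} \not\subset X$.
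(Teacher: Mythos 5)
Your treatment of the first two assertions is correct and follows essentially the paper's own route: \autoref{thm:Z-swout-T-Xz} gives dominance of $(\rho_2,\rho_3)\colon\mathscr{B}\to Z\times Z$, whence for general $w$ the fiber $\rho_2^{-1}(w)$ maps $\rho_3$-densely into $Z$; and since each relevant point $z$ lies on a contracted line $\fib{u}\subset C_w$, closedness of $C_w$ yields $Z\subset C_w$.

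The gap is in the last assertion, and you flag it yourself. After reducing to the case where $L=\lin{C_w}$ is a hyperplane and $C_w$ is an irreducible component of $X\cap L$, you propose to extract a contradiction by ``controlling how the algebraic family $w\mapsto\lin{C_w}$ varies,'' but no contradiction is actually produced, and it is unclear that studying this variation leads to one (every hyperplane section of $X$ has pure dimension $N-2$, so a moving family of degenerate cones $C_w$ is not by itself absurd). The paper closes the argument with two ingredients you are missing: (a) \autoref{thm:XP'}, which asserts that for \emph{any} linear $P'\supseteq Z$ the intersection $X\cap P'$ is an \emph{irreducible} cubic hypersurface of $P'$ --- this is exactly where the standing hypothesis $\lin{Z}\not\subset X$ enters --- so that $C_w\subset P'$ forces the equality $C_w=X\cap P'$ for dimension reasons; and (b) the observation that then, since $\lin{Z}\subset P'$, one has $X\cap\lin{Z}=C_w\cap\lin{Z}$, a cone with vertex $w$, for a \emph{single} general $w$; letting $w$ range over general points of $Z$ makes the fixed variety $X\cap\lin{Z}$ a cone with vertex $\lin{Z}$, hence $\lin{Z}\subset X$, the desired contradiction. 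In particular no delicate analysis of the family of spans is needed: once (a) is in place, the only use of varying $w$ is the elementary fact that a nonempty cone whose vertex contains a dense subset of $Z$ has vertex containing $\lin{Z}$.
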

\begin{proof}
  From \autoref{thm:Z-swout-T-Xz}, the projection to the second and third factors,
  $(\rho_2,\rho_3): \mathscr{B}\spcirc \rightarrow Z \times Z$ is dominant.
  Hence $\rho_3(\rho_2^{-1}(w))$ is dense in  $Z$.
  Since $\fib{u} \subset C_w$ for general $u \in  C_w$,
  it follows $Z \subset C_w$.

  We have $\deg(C_w) > 1$;
  otherwise, $\lin{Z} \subset C_w \subset X$, contrary to our assumption.
  Suppose that there exists a hyperplane $P' \subset \PN$
  such that $C_w \subset P'$.
  Then $C_w = X \cap P'$ because of \autoref{thm:XP'}.
  Since $Z \subset P'$, we have
  $X \cap \lin{Z} = X \cap P' \cap \lin{Z} = C_w \cap \lin{Z}$,
  which is a cone with vertex $w$. Since $w$ is general,
  it is a cone with vertex $\lin{Z}$, a contradiction.
  Thus $\lin{C_w} = \PN$.
\end{proof}

\begin{proof}[Proof of \autoref{thm:dim-gamma-ConezZ}]
  Take a general $w \in  Z$.
  From \autoref{thm:Z-in-Cw}, we have $Z \subset C_w$.
  Let $P:=\lin{Z} \subset \PN$, and
  let $C_w^P$ be an irreducible component of $C_w \cap P$ containing $Z$,
  where $\dim C_w^P \geq \dim P - 2$.
  Then $C_{w_1}^P \neq C_{w_2}^P$ for general $w_1 \neq w_2 \in  Z$
  (otherwise, $C_w^P$ is a cone with vertex $P=\lin{Z}$, which is absurd).

  Thus $X^P := X \cap P$ is swept out by $C_w^P$'s with general $w \in  Z$.
  Let $\gamma_{X^P}: X^P \dashrightarrow P\spcheck$
  be the Gauss map of $X^P \subset P$,
  where $P\spcheck = \Gr(\dim P-1, P)$ is the set of hyperplanes of $P$.

  \begin{claim}
    Let $w \in Z$ be general.
    Assume that
    $\lin{uw} \cap \Sing(X^P) = \set{w}$
    for general $u \in  C_w^P$.
    Then $\gamma_{X^P}(\lin{uw}) \subset P\spcheck$ is of dimension $1$.
  \end{claim}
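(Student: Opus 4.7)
The plan is to set up local affine coordinates centered at $w$ and compute $\gamma_{X^P}$ along $\ell := \lin{uw}$ directly. Placing $w$ at the origin of an affine chart, and noting that $w \in \Sing(X)$ forces the projective cubic $X$ to be defined locally by $F = Q + C$ with $Q, C$ homogeneous of degrees $2$ and $3$, one gets $X^P$ defined by $f = q + c$ with $q := Q|_P$ and $c := C|_P$. For $u \in C_w^P \subset C_w \cap P$ one has $Q(u) = C(u) = 0$, so $q(u) = c(u) = 0$ and $\ell \subset X^P$. A computation with the homogenization $\tilde f(y_0, y) = y_0 q(y) + c(y)$ should yield
\[
  \gamma_{X^P}(tu) = \pco{0 : \nabla q(u) + t\nabla c(u)} \in P\spcheck \qquad (t \neq 0),
\]
from which $\gamma_{X^P}(\ell)$ is $1$-dimensional precisely when $\nabla q(u)$ and $\nabla c(u)$ are linearly independent as functionals on $P$. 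The claim thus reduces to verifying this linear independence for general $u$.

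I would then argue by contradiction, assuming that $\nabla q(u)$ and $\nabla c(u)$ are linearly dependent for general $u \in C_w^P$. The hypothesis $\lin{uw} \cap \Sing(X^P) = \set{w}$ directly rules out two subcases: if both gradients vanish, then $\nabla f(tu) \equiv 0$ and $\ell \subset \Sing(X^P)$; if they are nonzero and parallel with $\nabla q(u) = \lambda \nabla c(u)$ for some $\lambda \neq 0$, then $\nabla f(-\lambda u) = 0$ produces a second singular point $-\lambda u \neq w$ of $X^P$ on $\ell$. So for general $u \in C_w^P$ exactly one of $\nabla q(u), \nabla c(u)$ vanishes; irreducibility of $C_w^P$ then forces one of the identities $\nabla q \equiv 0$ or $\nabla c \equiv 0$ on all of $C_w^P$, and hence on $Z \subset C_w^P$.

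The key geometric input I would invoke at this point is the identity $\nabla Q(z) + \nabla C(z) = 0$ for $z \in Z$, which holds in the affine chart because $z \in \Sing(X)$. Restricting to $P$ gives $\nabla q(z) + \nabla c(z) = 0$ on $Z$, so whichever of $\nabla q, \nabla c$ vanishes identically on $Z$ forces the other to vanish there as well; in particular $\nabla q \equiv 0$ on $Z$. Because $q$ is quadratic, $\set{\nabla q = 0}$ is a linear subspace of $P$, and containing $Z$ it must contain $\lin{Z} = P$, forcing $q \equiv 0$ on $P$. But then $w$ is a triple point of the cubic $X^P$, so $X^P$ is a cone with vertex $w$; as the argument is uniform in general $w \in Z$, the vertex of $X^P$ contains a dense subset of $Z$ and hence all of $\lin{Z} = P$, contradicting \autoref{thm:XP'}, according to which $X^P$ is a proper irreducible cubic hypersurface of $P$.

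The main obstacle should be the pair of borderline cases in which exactly one of $\nabla q(u), \nabla c(u)$ vanishes identically on $C_w^P$, since the hypothesis $\lin{uw} \cap \Sing(X^P) = \set{w}$ does not exclude these on its own. The resolution is the ambient-singularity relation $\nabla Q + \nabla C = 0$ on $Z$, which couples the two cases, combined with the spanning property $\lin{Z} = P$ to force $q \equiv 0$ on $P$ and thereby the cone structure of $X^P$ that contradicts \autoref{thm:XP'}.
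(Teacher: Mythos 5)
Your proposal is correct, and it takes a genuinely different route from the paper. The paper argues synthetically: assuming $\lin{uw}$ is contracted by $\gamma_{X^P}$, it invokes \autoref{thm:Xz-in-TTyX-for-y-sz} to place $C_w^P$ inside $\TT_yX^P$ for points $y$ on secant lines $\lin{sw}$ with $s$ general in the singular component $S^P \supset Z$, shows that $\gamma_{X^P}(\Cone_w(S^P))$ cannot be a single point (else $Z$ would lie in one tangent hyperplane, contradicting $\lin{Z}=P$), and then traps $C_w^P$ in the intersection $T \cap T'$ of two distinct tangent hyperplanes, forcing $C_w^P$ to be a linear variety of codimension $2$ containing $Z$ and hence containing $P=\lin{Z}$, which is absurd. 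You instead compute $\gamma_{X^P}$ explicitly on the line in an affine chart centered at $w$, reducing the claim to the linear independence of $\nabla q(u)$ and $\nabla c(u)$; the hypothesis $\lin{uw}\cap\Sing(X^P)=\set{w}$ exactly eliminates the configurations ``both gradients vanish'' and ``both nonzero and proportional,'' and the relation $\nabla q+\nabla c=0$ along $Z\subset\Sing(X)$ couples the two remaining degenerate cases so that $\nabla q\equiv 0$ on $Z$, whence, by linearity of $\set{\nabla q=0}$ and $\lin{Z}=P$, one gets $q\equiv 0$ and $X^P$ is a cone with vertex $w$. Your computation is more elementary and self-contained, and it makes transparent exactly what the hypothesis buys; the paper's version is coordinate-free and reuses machinery already in place (\autoref{thm:3rd-line}, \autoref{thm:Xz-in-TTyX-for-y-sz}). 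One point to make fully explicit in a final write-up: ``$X^P$ is a cone with vertex $w$'' for a single $w$ is not yet a contradiction (a cubic cone is a perfectly good irreducible cubic hypersurface of $P$, consistent with \autoref{thm:XP'}), so you genuinely need, as you indicate, that the negation of the claim --- a constructible condition --- would hold on a dense subset of $Z$, so that the closed linear vertex locus of $X^P$ contains $Z$, hence $\lin{Z}=P$, forcing $X^P=P$; the paper's proof relies on the generality of $w$ at an analogous point.
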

  \begin{proof}
    Suppose that $\lin{uw} \subset X^P$ is contracted to a point
    under $\gamma_{X^P}$.
    Let $S^P$ be the irreducible component of $\Sing(X^P)$ containing $Z$.
    Then we have $\Cone_{w}(S^P) \not\subset \Sing(X^P)$
    (otherwise, we have $\Cone_w(S^P) = S^P$, which means that
    $S^P$ is a cone with vertex $w$; since $w \in  Z$ is general,
    it follows that $S^P$ is a cone with vertex $P = \lin{Z}$,
    a contradiction).

    For general $s \in  S^P$,
    and for $y \in  \lin{sw} \setminus \Sing(X^P)$,
    it follows that $C_w^P \subset \TT_yX^P$
    as in  \autoref{thm:Xz-in-TTyX-for-y-sz}.
    On the other hand,
    the image $\gamma_{X^P}(\Cone_{w}(S^P)) \subset P\spcheck$ is of dimension $\geq 1$
    (otherwise, the image is the set of a point $T \in  P\spcheck$; then
    since $\lin{sw} \subset \TT_yX^P = T$, we have $Z \subset S^P \subset T$,
    contradicting $\lin{Z} = P$).
    Therefore taking $T, T' \in  \gamma_{X^P}(\Cone_{w}(S^P))$,
    we have $C_w^P = T \cap T'$, which is a linear variety.
    Then $P = \lin{Z} \subset C_w^P$, a contradiction.
  \end{proof}

  Now suppose $\lin{ww'} \subset \Sing(X^P)$ for general $w, w' \in  Z$.
  For general $u \in  C_w^P$, take the $2$-plane $M := \lin{uww'} \subset P$.

  If $\lin{uw} \cap \Sing(X^P) \neq \set{w}$,
  then $M \subset X^P$
  (otherwise, for $\tilde{u} \in  \lin{uw} \cap \Sing(X^P)$ with $\tilde{u} \neq w$,
  we have $\set{\tilde{u}} \cup \lin{ww'} \subset \Sing(X \cap M)$,
  contrary to \autoref{thm:3rd-line}).
  If $\lin{uw} \cap \Sing(X^P) = \set{w}$,
  then by the above claim,
  the locus $Z$ is swept out by $\TT_{\tilde{u}}X^P$'s with general $\tilde{u} \in  \lin{uw}$;
  in  particular, we may take a point $\tilde{u} \in  \lin{uw}$ such that $w' \in  \TT_{\tilde{u}}X$,
  and then $M \subset \TT_{\tilde{u}}X$.
  Hence $M \subset X^P$.

  As a result, $w' \in  M \subset \TT_{u}X^P$. Since $w' \in  Z$ is general,
  it follows that $P = \lin{Z} \subset \TT_{u}X^P$, a contradiction.
\end{proof}

Next we give the following two corollaries.

\begin{cor}\label{thm:dim-gamma-cone-zZ}
  Let $w \in  Z$ be general.
  Then $Z \not\subset T$ for general $T \in  \gamma(\Cone_w(Z))$.
  Hence $\dim (\gamma(\Cone_w(Z))) \geq 1$,
  and $C_w$ is swept out by $C_w \cap T$'s with general $T \in  \gamma(\Cone_w(Z))$.
\end{cor}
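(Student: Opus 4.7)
The plan is to prove the main assertion ($Z \not\subset T$ for general $T$) by contradiction against \autoref{thm:dim-gamma-ConezZ}; the two remaining conclusions will then fall out quickly. Suppose instead that $Z \subset T$ for general $T \in \gamma(\Cone_w(Z))$. Since the hyperplanes containing $Z$ form the closed linear subvariety $\lin{Z}^* \subset \Pv$, the hypothesis upgrades to $\gamma(\Cone_w(Z)) \subset \lin{Z}^*$, that is, $\lin{Z} \subset \TT_x X$ for every smooth point $x$ of $X$ lying in $\Cone_w(Z)$.

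From here the idea is to derive $\Cone_w(Z) \subset \Sing(X \cap \lin{Z})$, which contradicts \autoref{thm:dim-gamma-ConezZ}. By the standing hypothesis $\lin{Z} \not\subset X$, \autoref{thm:XP'} applied with $P' = \lin{Z}$ makes $X \cap \lin{Z}$ an irreducible cubic hypersurface of $\lin{Z}$. For a smooth point $x$ of $X$ with $\lin{Z} \subset \TT_x X$, the tangent space $\TT_x(X \cap \lin{Z})$ equals $\TT_x X \cap \lin{Z} = \lin{Z}$ itself, so $x$ is singular in $X \cap \lin{Z}$ viewed as a hypersurface of $\lin{Z}$. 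For points $x \in \Cone_w(Z) \cap \Sing(X)$, the vanishing of all partials of the defining equation $F$ of $X$ yields vanishing of the partials of $F|_{\lin{Z}}$, so such $x$ also lie in $\Sing(X \cap \lin{Z})$. Combining both cases gives $\Cone_w(Z) \subset \Sing(X \cap \lin{Z})$, the desired contradiction.

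For the two remaining conclusions: if $\gamma(\Cone_w(Z))$ were a single point $T_0$, then $\Cone_w(Z) \subset T_0$ (every smooth $x$ lies in $\TT_x X = T_0$), so $Z \subset T_0$, contradicting the first assertion; hence $\dim \gamma(\Cone_w(Z)) \geq 1$. Picking a curve $A \subset \gamma(\Cone_w(Z))$ and invoking $\lin{C_w} = \PN$ from \autoref{thm:Z-in-Cw} (so that $C_w$ lies in no hyperplane), \autoref{thm:Y-sweptout-gen-members} shows that $C_w$ is swept out by $C_w \cap T$ as $T$ ranges over a dense open subset of $A$, hence in particular over general $T \in \gamma(\Cone_w(Z))$. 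The main obstacle is the singularity identification in the contradiction step; once $\TT_x(X \cap \lin{Z}) = \lin{Z}$ is observed, the collision with \autoref{thm:dim-gamma-ConezZ} is immediate and the remaining assertions are essentially bookkeeping using \autoref{thm:Z-in-Cw} and \autoref{thm:Y-sweptout-gen-members}.
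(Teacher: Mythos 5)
Your proof is correct and follows essentially the same route as the paper: deriving $\Cone_w(Z)\subset\Sing(X\cap\lin{Z})$ to contradict \autoref{thm:dim-gamma-ConezZ}, ruling out a point image, and then sweeping $C_w$ via \autoref{thm:Y-sweptout-gen-members} together with $\lin{C_w}=\PN$ from \autoref{thm:Z-in-Cw}. You merely make explicit the tangent-space computation ($\lin{Z}\subset\TT_xX$ forces $x\in\Sing(X\cap\lin{Z})$) that the paper leaves implicit.
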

\begin{proof}
  Suppose $Z \subset \TT_yX$ for general $y \in  \Cone_w(Z)$.
  Then $y \in  \Sing(X \cap \lin{Z})$, and hence
  $\Cone_{w}(Z) \subset \Sing(X \cap \lin{Z})$,
  contradicting \autoref{thm:dim-gamma-ConezZ}.
  Therefore $Z \not\subset \TT_yX$.

  Suppose $\gamma(\Cone_w(Z)) = \Set{T}$. 
  Then $\lin{ww'} \subset T = \TT_yX$ for general $w' \in  Z$ and $y \in  \lin{ww'}$,
  which implies $Z \subset T = \TT_yX$, a contradiction.
\end{proof}

\begin{cor}\label{thm:Z-sw-out-ZnT}
  Let $u \in  C_w$ be general. Then
  $Z$ is swept out by $\TT_{u_0}X$'s for general $u_0 \in  \lin{uw}$.
\end{cor}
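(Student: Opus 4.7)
The plan is to apply \autoref{thm:Y-sweptout-gen-members} with $Y = Z$ and $A = \gamma(\lin{uw}) \subset \Pv$, so the proof reduces to two verifications: (a) that $\gamma(\lin{uw})$ is actually a curve (equivalently, $\lin{uw}$ is not contracted by $\gamma$), and (b) that $Z \not\subset \TT_{u_0}X$ for $u_0$ in a non-empty open subset of $\lin{uw}$.

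For (a), I would exploit that a general $u \in C_w$ is also general in $X$ because $C_w$'s sweep $X$ (see \autoref{thm:Cw}), so $\fib{u}$ is a line meeting $Z$ in a single point $z(u)$. The running hypothesis $\lin{Z} \not\subset X$ forces $Z$ to be non-linear, hence in particular not a line, so $\deg Z \geq 2$; consequently $Z \cap \TT_uX$ strictly contains $\set{z(u)}$, and a generic $(u,w)$ in the component of $\hat{\mathscr{C}}$ defining $C_w$ satisfies $w \neq z(u)$, i.e., $w \notin \fib{u}$. Then $\lin{uw} \neq \fib{u}$; since $\fib{u}$ is the unique line through $u$ contracted by $\gamma$, the line $\lin{uw}$ is not contracted and $\gamma(\lin{uw})$ is a curve in $\Pv$. (Note also $\lin{uw} \subset X$ by \autoref{thm:Cw-def-lem}.)

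For (b), $Z \subset \TT_{u_0}X$ means $\gamma(u_0)$ belongs to $\lin{Z}^* \subset \Pv$, a linear subspace of codimension $\dim \lin{Z} + 1 \geq 3$ in $\Pv$ (the inequality uses $\dim \lin{Z} \geq 2$, which follows since $Z$ is non-linear and of positive dimension). The curve $\gamma(\lin{uw})$ meets $\lin{Z}^*$ in at most finitely many points, so (b) holds after restricting to a suitable non-empty open subset of $\lin{uw}$. Applying \autoref{thm:Y-sweptout-gen-members} then finishes the proof. The main obstacle is (a), specifically excluding the coincidence $w = z(u)$ for the generic pair, which crucially requires the non-linearity of $Z$ inherited from the standing hypothesis $\lin{Z} \not\subset X$.
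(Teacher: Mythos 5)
Your strategy is genuinely different from the paper's: the paper never considers the dual curve $\gamma(\lin{uw})$, but instead, for each general $w' \in Z$, \emph{constructs} a point $u_0 \in \lin{uw}$ with $w' \in \TT_{u_0}X$, by showing that the $2$-plane $\lin{uww'}$ is not contained in $X$ and invoking \autoref{thm:3rd-line} to decompose $X \cap \lin{uww'}$ into three lines, $u_0$ being the intersection of $\lin{uw}$ with the third line through $w'$. Your reduction to \autoref{thm:Y-sweptout-gen-members} has a genuine gap at exactly the point where it replaces that construction. The remark requires $Y \not\subset H$ for \emph{every} member $H$ of the closed curve $A$, and this is essential: if $Y \subset H_0$ for even a single special member $H_0$, a general point of $Y$ may lie on no member of $A$ other than $H_0$ (for instance, if $A$ is a pencil through $H_0$, then $Y \cap H = Y \cap H_0 \cap H$ is contained in the base locus for every $H \neq H_0$), and then $Y$ is not swept out by the general members. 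You verify $Z \not\subset \TT_{u_0}X$ only for \emph{general} $u_0 \in \lin{uw}$; but the closed curve $A = \overline{\gamma(\lin{uw})}$ also contains the tangent hyperplanes at the finitely many special smooth points of $\lin{uw}$ and the limit hyperplane(s) over the singular point $w$, and nothing in your argument prevents one of these from containing $\lin{Z}$ --- that is, prevents $A$ from meeting the linear space $\lin{Z}^* \subset \Pv$, which a curve can certainly do despite the codimension count. To close the gap you would need $\overline{\gamma(\lin{uw})} \cap \lin{Z}^* = \emptyset$, which is not established.

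There is also a soft spot in step (a). Knowing $\deg Z \geq 2$ gives points of $Z \cap \TT_uX$ other than $z(u)$, but it does not show that the generic point of the fiber over $u$ of the \emph{particular} component $\mathscr{C} \subset \hat{\mathscr{C}}$ defining $C_w$ is such a point. Every component of $\hat{\mathscr{C}}$ dominating $X$ has dimension $N + \dim Z - 2$, so when $\dim Z = 1$ the locus $\set*{(x,z(x))}{x \in X}$ (that is, $\mathscr{Z}_1$) is itself a component of $\hat{\mathscr{C}}$, and for the corresponding cone $C_w = X_w$ one has $\lin{uw} = \fib{u}$, which \emph{is} contracted by $\gamma$. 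One must take $C_w$ to be the component exhibited in \autoref{thm:Z-in-Cw}, whose incidence correspondence dominates $Z \times Z$, so that $w \neq z(u)$ holds generically. The paper's plane-section argument never needs $\gamma(\lin{uw})$ to be a curve, so it avoids both of these issues.
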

\begin{proof}
  Note that $\lin{uw} \cap \Sing(X) = \set{w}$ since $\Sec(\Sing(X)) \neq X$.
  Let $w' \in  Z$ be general,
  and take $y \in  \lin{ww'} \setminus \Sing(X)$.
  We may assume $u \in  C_w \setminus \TT_yX$.
  For $M := \lin{uww'} \subset \PN$.
  it follows
  $M \not\subset X$ (otherwise, $u \in  M \subset \TT_yX$, a contradiction).
  Since $\lin{ww'} \not\subset \Sing(X \cap M)$,
  it follows from \autoref{thm:3rd-line}
  that
  there exists a line $l \subset X \cap M$ with $l \neq \lin{ww'}$ passing through $w'$.
  For the intersection point $u_0 = \lin{uw} \cap l$,
  we have $w' \in  l \subset \TT_{u_0}X$.
  This implies the assertion.
\end{proof}

\subsection{Cones of codimensions one in  the cubic hypersurface}
\label{sec:cones-codim-one}

Using the tangent hyperplane $T \in  \gamma(\Cone_w(Z)) \subset \Pv$
discussed in  the previous subsection,
we investigate the structure of the cone $C_w$ of codimension one in  $X$,
and give the proof of \autoref{thm:large-lin-Z}.

\begin{proof}[Proof of \autoref{thm:large-lin-Z}]
  Assume $X \neq \Sec(\Sing(X))$ and write $Z = Z_1$ as above.
  Assume $\lin{Z} \not\subset X$.
  Take general points $w, w' \in  Z$
  and set $T = \TT_yX$ for $y \in  \lin{ww'} \setminus \Sing(X)$.
  Note that $\lin{ww'} \subset T$.
  From \autoref{thm:dim-gamma-cone-zZ}, we have $Z \not\subset T$.

  Since $\Sec(\Sing(X)) \neq X$,
  we may take a non-empty open subset $X\spcirc \subset X$ consisting of smooth points $x$
  such that $\fib{x} \cap \Sing(X) = \set{z}$ with some $z \in  Z$.

  \begin{claim}\label{thm:gammaCz-n-T}
    $\gamma(C_w \cap T) = \gamma(C_w)$ for general $w \in Z$.
  \end{claim}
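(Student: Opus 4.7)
The plan is to exploit the line structure of Gauss fibers on $C_w$. For a general smooth point $u \in C_w$, the Gauss fiber $F_u := \fib{u}$ is a line contained in $C_w$ whose intersection with $\Sing(X)$ is the single point $z_u \in Z$ (using $\kappa = 1$). I aim to show that $F_u$ meets $T$ at a smooth point of $X$; once this is known, $\gamma$ is constant along $F_u$, so this smooth intersection point lies in $C_w \cap T$ and has the same image $\gamma(u) \in \gamma(C_w)$. As $u$ varies in a dense open of $C_w$, the images $\gamma(u)$ sweep a dense subset of $\gamma(C_w)$, and the claim follows by taking closures.

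First, $C_w \not\subset T$ since $\lin{C_w} = \PN$ by \autoref{thm:Z-in-Cw}; in particular, $C_w \cap T$ has pure codimension one in $C_w$. Since $F_u$ is a line in $\PN$ and $T$ is a hyperplane, $F_u \cap T$ is non-empty. The only way $F_u \cap T$ could fail to contain a smooth point of $X$ is for $F_u \not\subset T$ and $F_u \cap T = \set{z_u}$, which forces $z_u \in T$. Hence the whole argument reduces to showing that for a generic $u \in C_w$ one has $z_u \notin T$.

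Since $Z \not\subset T$ by \autoref{thm:dim-gamma-cone-zZ}, the intersection $Z \cap T$ is a proper subvariety of $Z$. It therefore suffices that the rational map $\pi \colon C_w \dashrightarrow Z$ defined by $u \mapsto z_u$ be dominant: then $\pi^{-1}(Z \cap T)$ is a proper subvariety of $C_w$, and $z_u \notin T$ on its complement. Dominance is provided by \autoref{thm:Z-swout-T-Xz}: for a general $z \in Z$, the locus $Z$ is swept by $Z \cap \TT_xX$ as $x$ ranges over $X_z$, so a general $w \in Z$ lies in $Z \cap \TT_xX$ for some $x \in X_z$; this says $x \in C_w$ with $z_x = z$ (using $\kappa = 1$), realizing a preimage over a generic $z \in Z$. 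The main obstacle is securing this dominance input; granted it, the rest is a short generic-point calculation.
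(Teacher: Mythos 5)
Your proof is correct and follows essentially the same route as the paper's: both arguments reduce the claim to showing that for general $u\in C_w$ the point $\fib{u}\cap\Sing(X)$ is a general point of $Z$ (via the sweeping statement of \autoref{thm:Z-swout-T-Xz}, equivalently the density of $\rho_3(\rho_2^{-1}(w))$ in \autoref{thm:Z-in-Cw}), hence lies off $T$ because $Z\not\subset T$, so that $\fib{u}\cap T$ is a smooth point with the same Gauss image. Your write-up merely makes explicit the reduction to ``$z_u\notin T$'' and the dominance of $u\mapsto z_u$, which the paper cites directly from \autoref{thm:Z-in-Cw}.
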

  \begin{proof}
    Take $F = \fib{x} \subset C_w$ for
    general $x \in \gamma(C_w) \cap X\spcirc$.
    Then $z = F \cap \Sing(X)$ is a general point of $Z$
    since $\rho_3(\rho_2^{-1}(w)) \subset Z$ is dense
    as in  \autoref{thm:Z-in-Cw}.
    In particular, $z \notin T$.
    Since $F \cap T \notin \Sing(X)$, we have $\gamma(F \cap T) = \gamma(F) \in  \Pv$.
    Thus $\gamma(C_w \cap T) = \gamma(C_w)$.
  \end{proof}

  From \autoref{thm:Z-sw-out-ZnT},
  we may take an open subset $C_w\spcirc \subset C_w \cap X\spcirc$ satisfying
  that,
  for each $u \in  C_w\spcirc$,
  there exists
  $u_0 \in  \lin{uw} \cap X\spcirc$ such that $w' \in  \TT_{u_0}X$.
  Let $t \in  C_w \cap T$ be general.
  Note that $\lin{tw} \subset T$ since $w \in  T$.

  \begin{claim}
    There exists $t_0 \in  \lin{tw}$ such that $t_0 \neq w$ and $w' \in  \TT_{t_0}X$.
  \end{claim}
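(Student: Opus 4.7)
The plan is to produce $t_0$ as the intersection of two lines inside a $2$-plane $M \subset X$ obtained via \autoref{thm:M-in-Xz}. First I would set $F = \fib{t}$, the general $\gamma$-fiber through $t$. By the preceding \autoref{thm:gammaCz-n-T}, $\gamma(t) \in \gamma(C_w)$ is general, so $F \subset C_w$ is a general fiber, and hence $F \cap C_w\spcirc$ is open dense in $F$. I would pick a smooth point $u \in F \cap C_w\spcirc$ with $u \neq t$; by the defining property of $C_w\spcirc$, there exists $u_0 \in \lin{uw} \cap X\spcirc$ with $w' \in \TT_{u_0}X$, and note that $u_0 \in \lin{uw} \subset M := \lin{F, w}$.

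Next I would apply \autoref{thm:M-in-Xz} to the triple $(F, w, z)$, where $z := F \cap Z$. The hypotheses of that proposition hold in our setting: $Z$ is non-linear because $\lin{Z} \not\subset X$; $\Sec(Z) \neq X$ is the standing assumption; and $Z$ is swept out by $Z \cap \TT_xX$'s with general $x \in X_z$ by \autoref{thm:Z-swout-T-Xz}, while $z$ is general in $Z$ by the density of $\rho_3(\rho_2^{-1}(w))$ established in \autoref{thm:Z-in-Cw}. The conclusion is $M \subset X_z$, which amounts to saying that every $\gamma$-fiber through a smooth point of $M$ is a line through $z$.

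With $M \subset X_z$ in hand, the construction of $t_0$ is elementary. Inside $M \simeq \PP^2$, the line $\lin{u_0 z}$ coincides with $\fib{u_0}$. I would first observe that $z \notin \lin{tw}$ — otherwise $z \in F \cap \lin{tw} = \set{t}$, contradicting $z \in \Sing(X)$ while $t \in X\spcirc$ — so $\lin{u_0 z}$ and $\lin{tw}$ are distinct lines in $M$ and meet in a unique point $t_0$. A short coordinate check on $M$, using $z \neq u$ and $u_0 \neq w$, shows $t_0 \neq w$. Finally, since $t_0 \in \lin{u_0 z} = \fib{u_0}$, we have $\TT_{t_0}X = \TT_{u_0}X$, hence $w' \in \TT_{t_0}X$ as required.

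The main obstacle I expect is the bookkeeping of genericity: justifying that the generic choice of $t \in C_w \cap T$ already forces both $F$ and $z = F \cap Z$ to be generic enough for \autoref{thm:M-in-Xz} to apply, and guaranteeing $t_0 \in X\spcirc$. This should be handled by shrinking $C_w\spcirc$ and $X\spcirc$ if necessary and invoking the density statements in \autoref{thm:gammaCz-n-T} and \autoref{thm:Z-in-Cw}.
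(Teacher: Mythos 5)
Your proposal is correct and follows essentially the same route as the paper's own proof: sweep $C_w$ by fibers $\fib{t}$ with $t \in C_w \cap T$ general (via \autoref{thm:gammaCz-n-T}), pick $u_0 \in \lin{uw} \cap X\spcirc$ with $w' \in \TT_{u_0}X$, apply \autoref{thm:M-in-Xz} together with \autoref{thm:Z-swout-T-Xz} to identify $\fib{u_0} = \lin{u_0 z}$ inside the $2$-plane $M = \lin{Fw}$, and take $t_0 = \lin{tw} \cap \lin{u_0 z}$. Your explicit checks that $z \notin \lin{tw}$ and $t_0 \neq w$ are minor additions the paper leaves implicit.
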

  \begin{proof}
    Since $\gamma(C_w) = \gamma(C_w \cap T)$,
    the variety $C_w$ is swept out by fibers
    $\fib{t}$'s with general $t \in  C_w \cap T$.
    Then we may take $u \in  \fib{t} \cap C_w\spcirc$,
    and take $u_0 \in  \lin{uw} \cap X\spcirc$ such that $w' \in  \TT_{u_0}X$.
    Let $z \in  Z$ be the intersection point of $\fib{u}=\fib{t}$ and $Z$.
    Since $w \in  Z \cap \TT_uX$,
    it follows from \autoref{thm:M-in-Xz} and \autoref{thm:Z-swout-T-Xz} that $\lin{u_0z} = \fib{u_0}$.
    Note that $\lin{u_0z} \cap \Sing(X) = \set{z}$ since $u_0 \in  X\spcirc$.
    For the intersection point $t_0$ of $\lin{tw}$ and $\lin{u_0z}$,
    we have $w' \in  \TT_{t_0}X$.
  \end{proof}

  Set $M = \lin{t_0ww'}$.
  Then $M \subset X$
  (otherwise, $y, t_0, w, w' \in  \Sing(X \cap M)$, contrary to \autoref{thm:3rd-line}).
  In particular, $w' \in  \TT_{t}X$.
  Hence $\gamma(C_w \cap T) \subset {w'}^*$ in  $\Pv$.
  Since $\gamma(C_w) = \gamma(C_w \cap T)$,
  we have $\gamma(C_w) \subset {w'}^*$.
  Since $w \in  Z$ is general, 
  we have $X^* = \gamma(X) \subset {w'}^*$, 
  which means that $X$ is a cone with vertex $w'$,
  a contradiction.
\end{proof}

In addition, the following statement holds.

\begin{cor}\label{thm:Z-codim1-span}
  In the setting of \autoref{thm:large-lin-Z},
  if $Z$ is of codimension $\leq 1$ in $\lin{Z}$,
  then $\lin{Z} \subset \Sing(X)$.
\end{cor}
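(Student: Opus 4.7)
The plan is to argue by contradiction. By \autoref{thm:large-lin-Z} we already have $\lin{Z} \subset X$, so suppose $\lin{Z} \not\subset \Sing(X)$. The case $\codim_{\lin{Z}} Z = 0$ is immediate since then $\lin{Z} = Z \subset \Sing(X)$. So I may assume $\codim_{\lin{Z}} Z = 1$, in which case $Z$ is an irreducible hypersurface of $\lin{Z}$ of degree $\geq 2$: a degree-$1$ hypersurface would be linear, contradicting that $\lin{Z}$ is the linear span of $Z$.

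Next I will fix a general smooth point $y_0 \in \lin{Z}$ of $X$ (available by the contradiction hypothesis) and show $F \subset \TT_{y_0}X$ for the closure $F$ of every general fiber of $\gamma$. Given such an $F = \fib{x}$ for general $x \in X$, the set $F$ is a line with $F \cap \Sing(X) = \set{z}$ for some $z \in Z$ (using $\Sec(\Sing(X)) \neq X$ and $\kappa = 1$). The line $\lin{y_0 z}$ lies in $\lin{Z}$ and is not contained in $Z$ since $y_0 \notin Z$, so by B\'ezout it meets the degree-$\geq 2$ hypersurface $Z$ of $\lin{Z}$ in a scheme of length $\geq 2$, producing a point $s \in Z \subset \Sing(X)$ with $s \neq z$ on this line. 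Then $\lin{sz} = \lin{y_0 z}$, the line $\lin{sz}$ is not contained in $\Sing(X)$ because $y_0$ is smooth on $X$, and $y_0 \in \lin{sz} \setminus \Sing(X)$. Applying \autoref{thm:Xz-in-TTyX-for-y-sz} yields $F \subset \TT_{y_0}X$.

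Since closures of general fibers of $\gamma$ sweep out $X$, I conclude $X \subset \TT_{y_0}X$, which contradicts $X$ being a cubic hypersurface (equality of dimensions would force $X = \TT_{y_0}X$, a hyperplane). The main obstacle is really identifying the correct application of \autoref{thm:Xz-in-TTyX-for-y-sz}: one fixes a single smooth point $y_0 \in \lin{Z}$ and lets $F$ (and hence $z$) vary, using the codimension $\leq 1$ hypothesis on $Z$ to guarantee that the chord $\lin{y_0 z}$ always contains a second singular point, thereby making the lemma applicable for every general fiber simultaneously.
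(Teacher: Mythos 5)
Your argument is correct and is essentially the paper's own proof: both use the codimension $\leq 1$ hypothesis and $\deg(Z)>1$ to find a second point $z'\in\lin{yz}\cap Z$, then apply \autoref{thm:Xz-in-TTyX-for-y-sz} to conclude that every general fiber through a general $z\in Z$ lies in $\TT_yX$, forcing $X\subset\TT_yX$, a contradiction. The only difference is presentational (you fix $y_0$ and let $F$ vary, the paper phrases it as $X_z\subset\TT_yX$ for general $z$), and your explicit treatment of the codimension-$0$ case and of why $\deg Z\geq 2$ is a harmless elaboration.
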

\begin{proof}
  Otherwise, for general $y \in  \lin{Z} \setminus \Sing(X)$ and $z \in  Z$,
  since $\deg(Z) > 1$, we have a point $z' \in  \lin{yz} \cap Z$ with $z \neq z'$.
  By \autoref{thm:Xz-in-TTyX-for-y-sz}, we have $X_z \subset \TT_yX$.
  Since $z$ is general, it follows $X \subset \TT_yX$, a contradiction.
\end{proof}

\section{Induction on dual defects}
\label{sec:induct-dual-defects}

Let $X \subset \PN$ be a cubic hypersurface with $\delta_X > 0$, which is not a cone.
Let $\gamma: X \dashrightarrow X^* \subset \PN$ be the Gauss map of $X$.

If $\Sec(\Sing(X)) \neq X$,
then it follows from \autoref{thm:FnSing-codim1} that
the intersection $\fib{x} \cap \Sing(X)$ is a $(\delta_X-1)$-plane
for general $x \in X$.
Let $X\spcirc \subset X$ be a non-empty open subset which consists of $x \in X$
satisfying the above condition. We consider
\[
  \set*{(x, z) \in X\spcirc \times \Sing(X)}{z \in \fib{x}} \rightarrow \Sing(X),
\]
The left hand side is irreducible, and so is
the closure $Z = Z_X \subset \Sing(X)$ of the image of the projection.
Indeed, $Z$ is the closure of the union of
$\fib{x} \cap \Sing(X)$'s with general $x \in X$,
and that $\fib{x} \cap \Sing(X) = \fib{x} \cap Z$.
Note that $\dim Z \geq \delta_X$ if $X$ is not a cone.
In this setting, we consider the following condition,
\begin{enumerate}
\item[\quad (III')\;] $\Sec(\Sing(X)) \neq X$ and $\lin{Z} \subset X$.
\end{enumerate}

Now let us show the proposition below by induction on $\delta_X > 0$,
where (I) and (II) are conditions in  \autoref{mainthm}.

\begin{prop}\label{mainthm-b}
  Let $X \subset \PN$ be a cubic hypersurface with $\delta_X > 0$
  and assume that $X$ is not a cone.
  Then one of the $3$ conditions \textnormal{(I)}, \textnormal{(II)}, and \textnormal{(III')} holds.
\end{prop}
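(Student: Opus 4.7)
The plan is to prove the trichotomy by a case split on whether $\Sec(\Sing(X))$ equals $X$, combined with induction on $\delta_X$ in the complementary case. The base $\delta_X = 1$ of the induction is furnished by \autoref{thm:large-lin-Z} (yielding (III') when $\Sec(\Sing(X)) \neq X$) and \autoref{thm:del1-I-II} (yielding (I) or (II) when $X = \Sec(\Sing(X))$).

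The case $X = \Sec(\Sing(X))$ I would handle directly, without induction. Writing $\Sing(X) = S_1 \cup \dots \cup S_r$ as the irreducible decomposition, we have
\[
X \;=\; \bigcup_{1 \leq i,j \leq r} \Join(S_i, S_j),
\]
with $\Join(S_i,S_i) = \Sec(S_i)$; irreducibility of $X$ forces $X = \Join(S_i, S_j)$ for a single pair. When $i = j$, condition (I) is immediate. When $i \neq j$, a Terracini-type computation---for general $x \in \lin{s_1 s_2}$ with $s_k$ a smooth point of $S_k$, the tangent space $\TT_x X = \lin{\TT_{s_1} S_1, \TT_{s_2} S_2}$ depends only on $(s_1, s_2)$---shows that the closure of a general fiber of $\gamma$ is exactly the line $\lin{s_1 s_2}$, hence $\delta_X = 1$, and \autoref{thm:char-Xjoin} yields (II).

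For the case $X \neq \Sec(\Sing(X))$ with $\delta_X \geq 2$, I would take a general hyperplane $H \subset \PN$ and set $X' := X \cap H$. Standard transversality gives that $X'$ is a cubic hypersurface in $H$, not a cone, with $\delta_{X'} = \delta_X - 1$ and $\Sing(X') = \Sing(X) \cap H$. Since $\dim \Sec(\Sing(X)) \leq N-2$, a dimension count yields $\Sec(\Sing(X')) \subseteq \Sec(\Sing(X)) \cap H \subsetneq X'$, so $X'$ also satisfies $\Sec(\Sing(X')) \neq X'$. By induction, (III') holds for $X'$, i.e.\ $\lin{Z_{X'}} \subset X'$. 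To lift, note that a general fiber of $\gamma_{X'}$ through a general $x \in X'$ equals the intersection with $H$ of the corresponding general fiber of $\gamma_X$ (both are linear of the same dimension $\delta_X - 1$, one contained in the other). Intersecting with $\Sing$ and taking closures yields $Z_{X'} = Z_X \cap H$. Since $Z_X$ is non-degenerate in $\lin{Z_X}$ by definition, a standard Bertini-type argument gives $\lin{Z_X \cap H} = \lin{Z_X} \cap H$ for general $H$; hence $\lin{Z_X} \cap H = \lin{Z_{X'}} \subset X$. Since any $p \in \lin{Z_X}$ lies on some such general $H$, we conclude $\lin{Z_X} \subset X$, which is (III').

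The hard part is the lifting step, specifically the two identifications $Z_{X'} = Z_X \cap H$ and $\lin{Z_X \cap H} = \lin{Z_X} \cap H$ for general $H$. The first requires verifying that the subfamily of general fibers of $\gamma_X$ passing through points of $X'$ remains generic within the parametrising family of $Z_X$, for which a careful transversality argument is needed. The second is clear when $\dim Z_X \geq 2$ but demands a small separate check (using $\deg Z_X \geq 2$) in the edge case $\dim Z_X = 1$; the case where $Z_X$ is already linear is trivial, since then $\lin{Z_X} = Z_X \subset \Sing(X) \subset X$.
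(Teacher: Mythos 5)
Your overall architecture is close to the paper's: the base case $\delta_X=1$ is drawn from the same three results, and the inductive step for (III') via a general hyperplane section, with the identifications $Z_{X'}=Z_X\cap H$ and $\lin{Z_{X'}}=\lin{Z_X}\cap H$, is essentially the paper's argument (the paper also notes the same caveat about spans via general position of points of $Z_X$). Your reduction $\Sec(\Sing(X'))\subseteq\Sec(\Sing(X))\cap H\subsetneq X'$ is a legitimate small simplification, since it lets the induction invoke only the (III') branch of the hypothesis.

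The genuine gap is in your direct treatment of the case $X=\Sec(\Sing(X))$ with two distinct components, i.e.\ $X=\Join(S_i,S_j)$, $i\neq j$. The Terracini computation shows that $\TT_xX$ is constant along $\lin{s_1s_2}$, hence that the closed fiber of $\gamma$ \emph{contains} this line; it gives $\delta_X\geq 1$, not $\delta_X=1$. Nothing in your argument rules out a larger (linear) fiber, and you cannot invoke \autoref{thm:char-Xjoin} to pin down the structure of $S_i,S_j$ because that theorem takes $\delta_X=1$ as a hypothesis --- so your argument is circular at exactly this point. The paper avoids this by not treating the join case directly: for $\delta_X>1$ it applies the induction hypothesis to $X\cap H$, and \emph{only then}, knowing from \autoref{thm:char-Xjoin} (applied in lower dimension) that $S_j\cap H$ is a smooth quadric hypersurface of its span $\PP^{\dim S_j}$, does it run Terracini to get $\TT_xX\supseteq\lin{\lin{S_1},\lin{S_2}}=\PN$ and a contradiction. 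That structural input ($\dim\lin{S_j}=\dim S_j+1$ and $\lin{S_1}\cap\lin{S_2}$ a line) is what makes the Terracini step conclusive, and it is precisely what your shortcut skips. To repair your proof you would either need an independent argument that a cubic join of two distinct singular components which is not a cone has $\delta_X=1$, or you should fold the $i\neq j$ case back into the hyperplane-section induction as the paper does.
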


\begin{lem}\label{thm:ZXH}
  Let $X \subset \PN$ be a projective variety with $\delta_X > 0$. 
  Let $H \subset \PN$ be a general hyperplane. Then
  the dual defect of
  $X \cap H \subset H = \PP^{N-1}$
  is $\delta_X - 1$.
  In addition, if $X$ is not a cone, so is $X \cap H$.
\end{lem}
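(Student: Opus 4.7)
The plan is to realize $(X \cap H)^*$ as a linear projection of $X^*$ from the point $h \in \Pv$ dual to $H$. Under the canonical identification of $H\spcheck$ with $\Pv/h$ sending $T \mapsto T \cap H$ for $T \neq h$, this projection is the rational map $\pi_h \colon \Pv \dashrightarrow H\spcheck$. I will show that $\pi_h$ restricts to a generically finite surjection $X^* \dashrightarrow (X \cap H)^*$, which yields the dual defect formula, and then I will deduce the cone statement from the same picture.

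For the identification $(X \cap H)^* = \overline{\pi_h(X^*)}$, first note that for general $H$ and generic smooth $x \in X \cap H$ one has $\TT_x(X \cap H) = \TT_x X \cap H$. If $T \in X^*$ is tangent to $X$ at such an $x \in H$, then $T \cap H \supset \TT_x X \cap H = \TT_x(X \cap H)$, so $T \cap H \in (X \cap H)^*$. The assumption $\delta_X > 0$ enters crucially here: the closure of the contact locus $\gamma^{-1}(T)$ has dimension $\delta_X \geq 1$, so it meets the generic $H$, guaranteeing that such a tangency point $x \in H$ exists for generic $T \in X^*$. Conversely, a hyperplane $T'$ of $H$ tangent to $X \cap H$ at $x$ lifts to $T := \lin{T', \TT_x X}$, which is a hyperplane of $\PN$ (a dimension count gives $\dim T = N - 1$ since $\TT_x X \cap T' = \TT_x X \cap H$), belongs to $X^*$, and satisfies $T \cap H = T'$.

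Next, I would verify generic finiteness of $\pi_h|_{X^*}$ for general $h$. A generic positive-dimensional fiber would be (an open piece of) a line through $h$ contained in $X^*$, forcing $X^*$ to be a cone with vertex $h$. Since $\delta_X > 0$ gives $\dim X^* < N - 1$, the set of vertices of $X^*$ is a proper linear subvariety of $\Pv$, avoided by generic $h$. Therefore $\dim (X \cap H)^* = \dim X^* = N - 1 - \delta_X$, and
\[ \delta_{X \cap H} = (N - 1) - 1 - \dim(X \cap H)^* = \delta_X - 1. \]

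For the cone statement I would argue by contrapositive. Suppose $X \cap H$ is a cone with vertex $v \in H$; then every tangent hyperplane of $X \cap H$ contains $v$, so $(X \cap H)^* \subset v^*_H$, the hyperplane of $H\spcheck$ cut out by $v$. Since $v \in H$, one computes $\pi_h^{-1}(v^*_H) = \{T \in \Pv : v \in T\}$, a hyperplane of $\Pv$. The surjection of Step~1 then forces $X^* \subset \pi_h^{-1}(v^*_H)$, i.e.\ every tangent hyperplane of $X$ contains $v$, which means $X$ is a cone with vertex $v$. The main technical subtlety is the surjectivity $\overline{\pi_h(X^*)} = (X \cap H)^*$; both the explicit lift $\lin{T', \TT_x X}$ and the positive-dimensionality of the contact locus (where $\delta_X > 0$ is used) are essential to close the argument.
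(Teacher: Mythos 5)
Your proof is correct and follows essentially the same route as the paper: both realize $(X\cap H)^*$ as the image of $X^*$ under the linear projection $\pi_{[H]}$ from the point $[H]\in\Pv$, use $\delta_X>0$ to guarantee that the general contact locus meets $H$ (so that general tangent hyperplanes of $X$ restrict to tangent hyperplanes of $X\cap H$), and conclude by a dimension count. The only cosmetic differences are that you verify generic finiteness of $\pi_{[H]}|_{X^*}$ directly via the cone-vertex argument where the paper simply invokes birationality of a general projection, and you deduce the cone statement dually from $X^*\subset v^*$ rather than from the vertex lying on the closure of a general fiber of the Gauss map.
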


Let $x \in X$ be a general point.
For a hyperplane $H \subset \PN$ containing $x$ with $H \neq \TT_xX$,
we set ${X'} := X \cap H \subset H=\PP^{N-1}$.
Then $\TT_x{X'} = \TT_xX \cap H$.
Since
\begin{equation*}
  \overline{\gamma_X^{-1}(\gamma_X(x))} \cap H
  \subset \overline{\gamma_{X'}^{-1}(\gamma_{X'}(x))}.
\end{equation*}
it immediately follows $\delta_{X'} \geq \delta_X - 1$.
We have the diagram
\begin{equation}\label{eq:gamma-XH}
  \xymatrix{
    X \ar@{-->}[r]^{\gamma_X} & X^* \ar[d] \ar@{}[r]|{\mbox{$\subset$}}
    & \Pv \ar[d]^{\pi_{[H]}}
    \\
    {X'} \ar@{-->}[r]^{\gamma_{X'}} \ar@{}[u]|{\rotatebox{90}{$\subset$}} & \pi_{[H]}(X^*) \ar@{}[r]|{\mbox{$\subset$}}
    & \Pv[N-1],
  }    
\end{equation}
where $\pi_{[H]}$ is the linear projection
from the point $[H] \in  \Pv$;
note that $\pi_{[H]}([L]) = [L \cap H]$ for $[L] \in  \Pv$.

\begin{proof}[Proof of \autoref{thm:ZXH}]
  Let $[H] \in  \Pv$ be general. Then $\pi_{[H]}|_{X^*}$ is birational.
  For a general fiber $F$ of $\gamma_X$, since $\dim F = \delta_X > 0$,
  we have $F \cap H \neq \emptyset$;
  hence $\gamma_X({X'}) = X^*$.
  Therefore ${X'}^* = \gamma_{X'}({X'}) = \pi_{[H]}(X^*)$, which implies $\delta_{X'} = \delta_X - 1$.

  Next, suppose that ${X'} := X \cap H$ is a cone with vertex $v$.
  Since $F \cap H$ is a general fiber of $\gamma_{X'}$,
  we have $v \in  F \cap H \subset F$.
  Since $F$ is a linear variety,
  we find that $X$ is a cone with vertex $v$.
\end{proof}

\begin{proof}[Proof of \autoref{mainthm-b}]
  We use induction on $\delta_X > 0$.
  The case of $\delta_X=1$ follows from \autoref{thm:char-Xjoin}, \autoref{thm:del1-I-II}, and \autoref{thm:large-lin-Z}.
  Let $X \subset \PN$ be a cubic hypersurface with $\delta_X > 1$ which is not a cone.

  For general $H \in  \Pv \setminus X^*$,
  the intersection $X \cap H \subset H = \PP^{N-1}$
  is irreducible and $\Sing(X \cap H) = \Sing(X) \cap H$.
  Let $S_1, \dots, S_{r}$ be the irreducible components of $\Sing(X)$.
  From \autoref{thm:ZXH} and by hypothesis,
  $X \cap H$ satisfies (I), (II), or (III').
  \vspc
  
  Assume that $X \cap H$ satisfies (I).
  Then $X \cap H = \Sec(S_i \cap H)$ for some $i$.
  Indeed, we may take an index $i_0$
  such that $X \cap H = \Sec(S_{i_0} \cap H)$ for general $H \in  \Pv$.
  Then $X = \Sec(S_{i_0})$, i.e., $X$ also satisfies the condition (I).
  \vspc
  
  Assume that $X \cap H$ satisfies (II).
  Then we may take indices $i_1, i_2$
  such that $S_{i_1} \cap H$ and $S_{i_2} \cap H$ are quadrics
  appeared in  the condition~(II) for general $H \in  \Pv$.
  For simplicity, let $(i_1, i_2) = (1,2)$.
  Then $X = \Join(S_1, S_2)$ and $S_j$ is a smooth quadric hypersurface
  of $\lin{S_j} = \PP^{\dim S_j+1}$ with $j = 1,2$.
  In addition, $L := \lin{S_1} \cap \lin{S_2}$ is a line since
  $\lin{S_1} \cap \lin{S_2} \cap H$
  is the set of a point
  for general $H$.

  Let $x \in  X$ be a general point
  with $x \in  \lin{ab}$ for $a \in  S_1$ and $b \in  S_2$.
  Since $\TT_aS_1$ is a hyperplane of $\lin{S_1}$,
  and since $\TT_bS_2 \cap L \neq \emptyset$,
  we have $\lin{S_1} \subset \lin{\TT_aS_1, \TT_bS_2} = \TT_xX$.
  In the same way,
  $\lin{S_2} \subset \TT_xX$.
  It follows $\PN = \TT_xX$, a contradiction. Thus the case does not occur for $\delta_X > 1$.
  \vspc

  Finally, assume that $X \cap H$ satisfies (III').
  We take $Z_{X \cap H} \subset \Sing(X \cap H)$.
  Then
  \[
    F' \cap \Sing(X \cap H) = F' \cap Z_{X \cap H}
  \]
  for the fiber
  $F' := \overline{\gamma_{X\cap H}^{-1}(\gamma_{X\cap H}(x))}$ at a general $x \in X \cap H$.
  Since $F' = \fib{x} \cap H$ and since $H$ is general,
  we find that $\fib{x} \cap \Sing(X)$ is a linear variety.
  In particular, $\Sec(\Sing(X)) \neq X$ (see \autoref{thm:F-is-sec} below).

  We take $Z = Z_X \subset \Sing(X)$.
  Since
  $F' \cap \Sing(X\cap H) = \fib{x} \cap \Sing(X) \cap H$,
  we have $Z \cap H = Z_{X \cap H}$.
  In addition,
  $\lin{Z} \cap H = \lin{Z_{X \cap H}}$
  (this is because, if $\lin{A} = \PP^m$ for a variety $A$,
  then $\lin{a_0, \dots, a_{m-1}} = \PP^{m-1}$ for general ${a_0, \dots, a_{m-1}} \in A$).
  Since $\lin{Z_{X \cap H}} \subset X \cap H$ for general $H$,
  we have $\lin{Z} \subset X$.
\end{proof}

\begin{rem}\label{thm:F-is-sec}
  Assume $X = \Sec(\Sing(X))$. Let $F \subset X$ be the closure of a general fiber of $\gamma$.
  Taking a general $x \in X$, since $x \in \lin{s_1s_2}$ with some $s_1, s_2 \in \Sing(X)$,
  we have $\lin{s_1s_2} \subset F$ by Terracini's lemma.
  Hence $F = \Sec(F \cap \Sing(X))$ and that $F \cap \Sing(X)$ is non-linear.

  This implies that if $X$ satisfies the condition~(III) of \autoref{mainthm},
  then $X \neq \Sec(\Sing(X))$.
\end{rem}

\begin{lem}\label{mainthm-b-sub}
  In the case of \textnormal{(III')} of \autoref{mainthm-b},
  the locus $Z$ cannot be a $\delta_X$-plane.
\end{lem}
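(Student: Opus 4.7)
The plan is to argue by induction on $\delta_X \geq 1$, assuming for contradiction that $Z := Z_X$ is a $\delta_X$-plane.

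\emph{Inductive step} ($\delta_X \geq 2$): Take a general hyperplane $H \subset \PN$. By \autoref{thm:ZXH}, $X' := X \cap H$ is an irreducible cubic hypersurface of $H \cong \PP^{N-1}$, not a cone, with $\delta_{X'} = \delta_X - 1$; moreover $\Sing(X') = \Sing(X) \cap H$. Following the (III')-analysis in the proof of \autoref{mainthm-b}, we obtain $Z_{X'} = Z \cap H$ and $\lin{Z_{X'}} = \lin{Z} \cap H \subset X'$. Since $Z$ is a $\delta_X$-plane by assumption, $Z_{X'}$ is a $(\delta_X - 1)$-plane, i.e.\ a $\delta_{X'}$-plane. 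Moreover, if $\Sec(\Sing(X')) = X'$ held for general $H$, varying $H$ would yield $X \subseteq \Sec(\Sing(X))$, contradicting (III'); hence $X'$ also satisfies (III'). The induction hypothesis applied to $X'$ then yields the desired contradiction.

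\emph{Base case} ($\delta_X = 1$): Here I must show $Z$ cannot be a line. Assume $Z$ is a line; then $\lin{Z} = Z \subset \Sing(X)$, so $X$ has multiplicity $2$ along $Z$, and for a general $2$-plane $\Pi \supset Z$ the intersection splits as $X \cap \Pi = 2Z + L_\Pi$ with $L_\Pi$ a general Gauss fiber $\fib{x}$. My plan is to analyze the locus $Y$ of smooth $x \in X$ with $Z \subset \TT_xX$: for each such $x$, \autoref{thm:Cw-def-lem} applied with $w$ ranging over $Z \setminus \fib{x}$ forces $\lin{x, Z} \subset X$, producing a family of $2$-planes through $Z$ contained in $X$. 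Using the rigidity imposed by $\delta_X = 1$, $\kappa = 1$, and $\Sec(\Sing(X)) \neq X$, the goal is to conclude that $Z \subset \TT_xX$ for \emph{every} smooth $x \in X$, which gives $X^* \subset Z^*$ in $\Pv$ and forces $X$ to be a cone with vertex on $Z$, contradicting the hypothesis that $X$ is not a cone.

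\emph{Main obstacle:} The base case is the technical bottleneck. The arguments of \autoref{sec:secant-sing-locus}, in particular \autoref{thm:dim-gamma-ConezZ} and \autoref{thm:Z-codim1-span}, were developed under the opposite hypothesis $\lin{Z} \not\subset X$, and they crucially exploit the non-linearity of $Z$ (via $\Cone_w(Z) \neq Z$ and $\deg(Z) > 1$, both of which fail when $Z$ is a line). A new dimension count is therefore needed to upgrade the a priori bound $\dim Y \geq N - 3$ (coming from the codimension-$2$ condition $Z \subset \TT_xX$) to the equality $\overline{Y} = X$, equivalently to establish the inclusion $X^* \subset Z^*$ that yields the cone contradiction.
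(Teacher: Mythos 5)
Your inductive step (cutting with a general hyperplane to reduce $\delta_X>1$ to $\delta_X=1$, using \autoref{thm:ZXH} and the identifications $Z_{X\cap H}=Z\cap H$, $\lin{Z_{X\cap H}}=\lin{Z}\cap H$) is correct and is exactly what the paper does. The problem is the base case $\delta_X=1$: what you have written there is a plan, not a proof, and you say so yourself. The decisive step --- upgrading the locus $Y=\set*{x\in X}{Z\subset\TT_xX}$ from its expected dimension $N-3$ to all of $X$ --- is precisely the content of the statement, and no mechanism is offered for it. Worse, the target $X^*\subset Z^*$ is not something one should expect to reach by a direct dimension count: since $\dim X^*=N-2=\dim Z^*$ it would force $X^*=Z^*$, hence $X=X^{**}=Z$, which is absurd; in other words that inclusion is ``true'' only because the hypotheses are ultimately contradictory, so any argument for it must already exploit the full strength of the setup rather than a general-position estimate. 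The observation $X\cap\Pi=2Z+L_\Pi$ and the $2$-planes $\lin{xZ}\subset X$ produced by \autoref{thm:Cw-def-lem} are correct but do not by themselves close this gap.

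For comparison, the paper's base case runs a second induction, on $N$, with base $N=3$ given by \autoref{thm:N3}. The key trick is to cut with a general hyperplane $H$ \emph{containing the line $Z$} (not a general hyperplane of $\PN$), chosen with $[H]\notin X^*$. Because the general contact line is $\fib{x}=\lin{xz}$ with $z\in Z\subset H$, the component $X'_H$ of $X\cap H$ through a general point of $X\cap H$ still contains whole Gauss fibers, and a projection argument via the diagram \autoref{eq:gamma-XH} gives $\delta_{X'_H}=1$ with the same line $Z$ as its $Z$-locus. Then either $X'_H$ is not a cone, and the induction on $N$ applies; or $X'_H$ is a cone with vertex $v_H\in Z$, in which case, since $\dim Z^*=N-2$ while $\dim Z=1$, an $(N-3)$-dimensional family of such hyperplanes shares a common vertex $v$ and sweeps out $X$, forcing $X$ to be a cone --- the desired contradiction. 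You would need to supply an argument of this kind (or some genuinely new one) to complete your base case.
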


\begin{proof}
  First we assume $\delta_X=1$,
  and show the assertion by induction on $N$.
  The case of $N = 3$ follows from \autoref{thm:N3}.
  Assume $N > 3$ and suppose that $Z$ is a line.
  Let $H \subset \PN$ be a general hyperplane containing $Z$,
  and let $X' = X'_H$ be an irreducible component of $X \cap H$
  containing a general point $x$ of $X$.
  Since $\dim Z^* = N-2$, 
  we may assume $H \notin X^*$. 
  Then
  \[
    \Sing(X') \subset \Sing(X \cap H) = \Sing(X) \cap H.
  \]
  In particular, $x \notin \Sing(X')$.
  Since $\fib{x} \subset X$ is a line such that $\fib{x} \cap \Sing(X) = \set{z}$ with some $z \in  Z$,
  we have $\fib{x} = \lin{xz} \subset X'$.
  Hence $\delta_{X'} \geq 1$. Moreover we have:
  \begin{claim}
    $\delta_{X'} = 1$.
  \end{claim}
  \begin{proof}
    We consider the diagram \autoref{eq:gamma-XH}.
    It holds $\dim \gamma(X') = N-3$.
    In addition,
    we have $\dim \pi_{[H]}(\gamma(X')) = N-3$;
    otherwise $\overline{\gamma(X')} \subset \Pv$ must be a cone with vertex $[H]$,
    which does not occur since $[H] \notin X^*$.
    Since ${X'}^* = \gamma_{X'}(X') \subset \Pv[N-1]$ is equal to
    $\pi_{[H]}(\gamma(X'))$,
    we have $\delta_{X'} = 1$.
  \end{proof}

  Assume that $X'$ is not a cone. Then $X' = X \cap H$.
  For general $x_1, x_2 \in X'$ with $\fib{x_i} \cap \Sing(X) = \set{z_i}$,
  it follows $z_1 \neq z_2$.
  Hence the line $Z = Z_X$ coincides with $Z_{X'} \subset \Sing(X')$.
  Then we have a contradiction by induction hypothesis.

  Assume that $X' = X'_H$ is a cone with vertex
  $v = v_{H}$. Then $v \in  Z$.
  Since $X$ is not a cone,
  we may assume $v_{H_1} \neq v_{H_2}$ for general $H_1 \neq H_2 \in  Z^*$.
  Hence $v$ is general in  $Z$.
  Let us take $A \subset Z^*$ to be the set of hyperplanes $H \subset \PN$ such that
  $X'_H$ is a cone with vertex $v$.
  Since $\dim A = N-3 \geq 1$,
  the hypersurface $X$ is swept out by $X'_H$ with $H \in A$,
  which implies that $X$ is a cone with vertex $v$, a contradiction.
  Hence the result follows for $\delta_X=1$.

  In the case of $\delta_X > 1$, taking a general hyperplane of $\PN$ as in \autoref{thm:ZXH},
  we have the result by induction on $\delta_X$.
\end{proof}

\begin{proof}[Proof of \autoref{mainthm}]
  The result follows from \autoref{mainthm-b} and \autoref{mainthm-b-sub}.
  In the case of (III'), since $Z$ is of dimension $\geq \delta_X$ and is not a $\delta_X$-plane,
  $\lin{Z} \subset X$ is of dimension $> \delta_X$; hence the condition (III) follows.
\end{proof}

\begin{rem}
  In the case of (III'), if $Z$ is of codimension $\leq 1$ in $\lin{Z}$,
  then $\lin{Z} \subset X$ is indeed contained in $\Sing(X)$.
  This follows from \autoref{thm:Z-codim1-span} and by induction on $\delta_X$.
\end{rem}

\subsection*{Acknowledgments}
The author wish to thank
Professors Hajime Kaji and Hiromichi Takagi
for many helpful comments and advice.
Further, the author would like to thank Professor Francesco Russo
for informing about hypersurfaces with vanishing Hessians.
The author was supported by the Grant-in-Aid for JSPS fellows,
No.~16J00404.

\end{document}